\newtheoremstyle{theorem}	
   {}       			
   {}      				
   {\itshape}  				
   {\parindent} 			
   {\bfseries} 				
   {}         				
   {.7em}      				
   {}          				
\newtheoremstyle{definition}
  {}       				
  {}      				
  {} 		 				
  {\parindent} 				
  {\bfseries} 				
  {}         				
  {.7em}      				
  {}          				
\newtheoremstyle{remark}
  {}       			
  {}      			
  {} 					
  {\parindent} 			
  {\itshape} 			
  {.}         			
  {.7em}      			
  {}          			
\theoremstyle{theorem}
\newtheorem{theorem}{Theorem}[section]
\newtheorem{corollary}[theorem]{Corollary}
\newtheorem{lemma}[theorem]{Lemma}
\newtheorem{proposition}[theorem]{Proposition}
\theoremstyle{remark}
\newtheorem{remark}[theorem]{Remark}
\theoremstyle{definition}
\newtheorem{example}[theorem]{Example}
\newcommand{\Ebb}{\mathbb{E}}
\newcommand{\Fbb}{\mathbb{F}}
\newcommand{\Nbb}{\mathbb{N}}
\newcommand{\Pbb}{\mathbb{P}}
\newcommand{\Qbb}{\mathbb{Q}}
\newcommand{\Rbb}{\mathbb{R}}
\newcommand{\Bcal}{\mathcal{B}}
\newcommand{\Ccal}{\mathcal{C}}
\newcommand{\Ecal}{\mathcal{E}}
\newcommand{\Fcal}{\mathcal{F}}
\newcommand{\Kcal}{\mathcal{K}}
\newcommand{\Lcal}{\mathcal{L}}
\newcommand{\Pcal}{\mathcal{P}}
\newcommand{\Wcal}{\mathcal{W}}
\newcommand{\Xcal}{\mathcal{X}}
\newcommand{\bfr}{\mathfrak{b}}
\newcommand{\Cfrak}{\mathfrak{C}}
\newcommand{\Lip}{\text{Lip}}
\DeclareMathOperator*{\esssup}{ess\,sup}
\newcommand{\btilde}{\tilde{b}}
\newcommand{\bhat}{\hat{b}}
\newcommand{\bbar}{\overline{b}}
\newcommand{\dd}{\text{d}}
\newcommand{\Ep}[2]{\Ebb\negthinspace\left[\left\Vert #1 \right\Vert^{#2} \right]^{\frac{1}{#2}}\negthinspace}
\newcommand{\EPpO}[3]{\Ebb_{#2}\negthinspace\left[\left\Vert #1 \right\Vert^{#3} \right]\negthinspace}
\newcommand{\Eabs}[1]{\Ebb\negthinspace\left[\left\Vert #1 \right\Vert \right]\negthinspace}
\newcommand{\EW}[1]{\Ebb\negthinspace\left[ #1 \right]\negthinspace}
\newcommand{\EpE}[2]{\Ebb\negthinspace\left[\left\vert #1 \right\vert^{#2} \right]^{\frac{1}{#2}}\negthinspace}
\definecolor{darkgreen}{rgb}{0, .5, 0}
\definecolor{darkred}{rgb}{.5, 0, 0}
\begin{document}


\title[Mean-field SDEs with irregular expectation functional]{Strong Solutions of Mean-Field SDEs with irregular expectation functional in the drift}
\author[M.Bauer]{Martin Bauer}
\address{M. Bauer: Department of Mathematics, LMU, Theresienstr. 39, D-80333 Munich, Germany.}
\email{bauer@math.lmu.de}
\author[T. Meyer-Brandis]{Thilo Meyer-Brandis}
\address{T. Meyer-Brandis: Department of Mathematics, LMU, Theresienstr. 39, D-80333 Munich, Germany.}
\email{meyerbra@math.lmu.de}
\date{\today}
\maketitle

\begin{center}
\parbox{13cm}{
\begin{footnotesize}
\textbf{\textsc{Abstract.}} We analyze multi-dimensional mean-field stochastic differential equations where the drift depends on the law in form of a Lebesgue integral with respect to the pushforward measure of the solution. We show existence and uniqueness of Malliavin differentiable strong solutions for irregular drift coefficients, which in particular include the case where the drift depends on the cumulative distribution function of the solution. Moreover, we examine the solution as a function in its initial condition and introduce sufficient conditions on the drift to guarantee differentiability.
Under these assumptions we then show that the Bismut-Elworthy-Li formula proposed in \cite{Bauer_StrongSolutionsOfMFSDEs} 
holds in a strong sense, i.e. we give a probabilistic representation of the strong derivative with respect to the initial condition of expectation functionals of strong solutions to our type of mean-field equations in one-dimension. \\[0.2cm]
\textbf{\textsc{Keywords.}} McKean-Vlasov equation $\cdot$ mean-field stochastic differential equation $\cdot$ strong solution $\cdot$ uniqueness in law $\cdot$ pathwise uniqueness $\cdot$ irregular coefficients $\cdot$ Malliavin derivative $\cdot$ Sobolev derivative $\cdot$ Hölder continuity $\cdot$ Bismut-Elworthy-Li formula $\cdot$ expectation functional.
\end{footnotesize}
}
\end{center}

\section{Introduction}
Throughout this paper, let $T>0$ be a given time horizon. As an extension of stochastic differential equations, mean-field stochastic differential equations (hereafter mean-field SDEs), also referred to as McKean-Vlasov equations, given by
\begin{align}\label{eq:RegMainMcKeanVlasov}
	dX_t^x = \bbar\left(t,X_t^x,\Pbb_{X_t^x}\right) dt + \overline{\sigma}\left(t,X_t^x, \Pbb_{X_t^x} \right) dB_t,~ t\in [0,T], ~ X_0^x = x \in \Rbb^d,
\end{align}
allow the coefficients to depend on the law of the solution in addition to the solution process. Here, $\bbar: [0,T] \times \Rbb^d \times \Pcal_1(\Rbb^d) \to \Rbb^d$ and $\overline{\sigma}: [0,T] \times \Rbb^d \times \Pcal_1(\Rbb^d) \to \Rbb^{d\times n}$ are some given drift and volatility coefficients, $(B_t)_{t\in[0,T]}$ is $n$-dimensional Brownian motion, 
\begin{align*}
		\Pcal_1(\Rbb^d) := \left\lbrace \mu \left| \mu \text{ probability measure on } (\Rbb^d, \Bcal(\Rbb^d)) \text{ with } \int_{\Rbb^d} \Vert x \Vert d\mu(x) < \infty \right.\right\rbrace
\end{align*}
is the space of probability measures over $(\Rbb^d,\Bcal(\Rbb^d))$ with existing first moment, and $\Pbb_{X_t^x}$ is the law of $X_t^x$ with respect to the underlying probability measure $\Pbb$. \par

Mean-field SDEs arised from Boltzmann's equation in physics, which is used to model weak interaction between particles in a multi-particle system, and were first studied by Vlasov \cite{Vlasov_VibrationalPropertiesofElectronGas}, Kac \cite{Kac_FoundationsOfKineticTheory} and McKean \cite{McKean_AClassOfMarkovProcess}. Nowadays the study of mean-field SDEs is an active research field with numerous applications. Various extensions such as replacing the driving noise by a Lévy process or considering backward equations have been examined e.g.~in \cite{BuckdahnDjehicheLiPeng_MFBSDELimitApproach}, \cite{BuckdahnLiPeng_MFBSDEandRelatedPDEs}, and \cite{JourdainMeleardWojbor_NonlinearSDEs}. A cornerstone in the application of mean-field SDEs in Economics and Finance was set by Lasry and Lions with their work on mean-field games in \cite{LasryLions_MeanFieldGames}, see also \cite{Cardaliaguet_NotesOnMeanFieldGames} for a readily accessible summary of Lions' lectures at Collège de France. Carmona and Delarue developed a probabilistic approach to mean-field games opposed to the analytic one taken in \cite{LasryLions_MeanFieldGames}, see e.g. \cite{CarmonaDelarue_ProbabilisticAnalysisofMFG}, \cite{CarmonaDelarue_MasterEquation}, \cite{CarmonaDelarue_FBSDEsandControlledMKV}, \cite{CarmonaDelarueLachapelle_ControlofMKVvsMFG}, and \cite{CarmonaLacker_ProbabilisticWeakFormulationofMFGandApplications} as well as the monographs \cite{CarmonaDelarue_Book}. A more recent application of the concept of mean-fields is in the modeling of systemic risk, in particular in models for inter-bank lending and borrowing, see e.g. \cite{CarmonaFouqueMousaviSun_SystemicRiskandStochasticGameswithDelay}, \cite{CarmonaFouqueSun_MFGandSystemicRisk}, \cite{FouqueIchiba_StabilityinaModelofInterbankLending}, \cite{FouqueSun_SystemicRiskIllustrated}, \cite{GarnierPapanicolaouYang_LargeDeviationsforMFModelofSystemicRisk}, \cite{KleyKlueppelbergReichel_SystemicRiskTroughContagioninaCorePeripheryStructuredBankingNetwork}, and the cited sources therein. \par 
	
In this paper we analyze (strong) solutions of multi-dimensional mean-field SDEs of the form
\begin{align}\label{lebesgueMFSDE}
  dX_t^x = b\left(t,X_t^x,\int_{\Rbb^d} \varphi\left(t,X_t^x,z\right) \Pbb_{X_t^x}(dz) \right) dt + dB_t,~ t\in[0,T],~ X_0^x=x \in \Rbb^d,
\end{align}
for $b, \varphi: [0,T] \times \Rbb^d \times \Rbb^d \to \Rbb^d$. This mean-field SDEs generalize two commonly used models in the literature, where the first one considers $b(t,y,z) = z$, see e.g. \cite{MishuraVeretennikov_SolutionsOfMKV}, or \cite{BuckdahnDjehicheLiPeng_MFBSDELimitApproach} where the authors consider backward mean-field SDEs, and in the second model $\varphi(t,y,z) = \overline{\varphi}(z)$ for some $\overline{\varphi}: \Rbb^d \to \Rbb^d$, see e.g. \cite{Banos_Bismut}. Note that putting $\overline{\sigma} \equiv 1$ and 
\begin{align}\label{eq:MFDrift}
  \bbar(t,y,\mu) = (b \diamond \varphi)(t,y,\mu) := b\left(t,y,\int_{\Rbb^d} \varphi(t,y,z) \mu(dz)\right),
\end{align} 
yields that mean-field SDE \eqref{lebesgueMFSDE} is recognized as a special case of the general mean-field SDE \eqref{eq:RegMainMcKeanVlasov}. 

The first main contribution of this paper is to establish existence and uniqueness of weak and strong solutions of mean-field SDE \eqref{lebesgueMFSDE} with irregular drift. Further, we show that the strong solutions are Malliavin differentiable. For coefficients $\bbar$ and $\overline{\sigma}$ in the general mean-field SDE\eqref{eq:RegMainMcKeanVlasov} fulfilling typical regularity assumptions such as linear growth and Lipschitz continuity, existence and uniqueness is well-studied, see e.g \cite{BuckdahnLiPengRainer_MFSDEandAssociatedPDE}. In \cite{Chiang_MKVWithDiscontinuousCoefficients} the existence of strong solutions is shown for time-homogeneous mean-field SDEs \eqref{eq:RegMainMcKeanVlasov} with drift coefficients $\bbar$ that are of linear growth and allow for certain discontinuities in the space variable $y$ and are Lipschitz in the law variable $\mu$. In the time-inhomogeneous case it is shown in \cite{MishuraVeretennikov_SolutionsOfMKV} that there exists a strong solution of mean-field SDE \eqref{lebesgueMFSDE} in the special case $b(t,y,z) = z$ under the assumption that $\varphi$ is of linear growth. The special case of mean-field SDE \eqref{lebesgueMFSDE}, where $\varphi(t,y,z) = \overline{\varphi}(z)$, is treated in \cite{de2015strong}. Here the author assumes that the drift coefficient $b$ is bounded and continuously differentiable in the law variable $z$ and $\overline{\varphi}$ is $\alpha$-Hölder continuous for some $0< \alpha \leq 1$. The work that is the closest to our analysis presented in the following are \cite{Bauer_MultiDim} and \cite{Bauer_StrongSolutionsOfMFSDEs}, where for additive noise, i.e.~$\overline{\sigma} \equiv 1$, existence and uniqueness of weak and Malliavin differentiable strong solutions of mean-field SDE \eqref{eq:RegMainMcKeanVlasov} is shown for irregular drift coefficients $\bbar$ including the case of bounded coefficients $\bbar$ that are allowed to be merely measurable in the space variable $y$ and continuous in the law variable $\mu$.\par 
	Considering mean-field SDE \eqref{lebesgueMFSDE}, first existence and uniqueness results of solutions for irregular drifts are inherited from results in \cite{Bauer_MultiDim} on the general mean-field SDE \eqref{eq:RegMainMcKeanVlasov} by specifying $b$ and $\varphi$ such that $\bbar$ in \eqref{eq:MFDrift} fulfills the assumptions in \cite{Bauer_MultiDim}. We derive these conditions in \Cref{sec:Recall}. However, in order to guarantee continuity in the law variable $\mu$ required in \cite{Bauer_MultiDim} we cannot allow for irregular $\varphi$, in particular we need that $\varphi$ is Lipschitz continuous in the third variable. This excludes interesting examples where $\varphi$ is irregular, as for example the case when $\varphi(t,x,z) = \mathbbm{1}_{\lbrace z\leq u \rbrace}$, $u\in \Rbb$, and thus the case where the drift $b\left(t,X_t^x,F_{X_t^x}(u)\right)$ depends on the distribution function $F_{X_t^x}(\cdot)$ of the solution is not covered. The objective of this paper is thus to show existence and uniqueness of weak and Malliavin differentiable strong solutions of mean-field SDE \eqref{lebesgueMFSDE} where we relax the conditions on $\varphi$ even further and merely assume that $\varphi$ is measurable and of at most linear growth. The assumptions on the drift function $b$ are inherited from \cite{Bauer_MultiDim} which includes the case of merely measurable coefficients of at most linear growth that are continuous in the law variable $z$. As one application we obtain a global version of Carathéodory's existence theorem for ODEs.

	In the second part of the paper the main objective is to study the differentiability in the initial condition $x$ of the expectation functional $\Ebb[\Phi(X_T^x)]$ and to give a Bismut-Elworthy-Li type representation of $\partial_x \Ebb[\Phi(X_T^x)]$\footnote{Here, $\partial_x$ denotes the Jacobian with respect to the variable $x$.}, where $\Phi: \Rbb \to \Rbb$ and $(X_t^x)_{t\in [0,T]}$ is the unique strong solution of the one-dimensional mean-field SDE \eqref{lebesgueMFSDE}, i.e. $d=1$. In \cite{Bauer_StrongSolutionsOfMFSDEs} it is shown that $\Ebb[\Phi(X_T^x)]$ is Sobolev differentiable in its initial condition for a broad range of irregular drift coefficients and for $\Phi$ fulfilling merely some integrability condition, and a Bismut-Elworthy-Li formula is derived. However, for various purposes it is of interest to understand when the derivative $\partial_x \Ebb[\Phi(X_T^x)]$ exists in a strong sense. For example, the weak derivative does not allow for a satisfactory interpretation of $\partial_x \Ebb[\Phi(X_T^x)]$ as a sensitivity measure in the sense of the so-called {\it Delta} from Mathematical Finance.
For the case $\varphi(t,y,z) = \overline{\varphi}(z)$ and for smooth coefficients, \cite{Banos_Bismut} provides a Bismut-Elworthy-Li formula for the continuous derivative $\partial_x \Ebb[\Phi(X_T^x)]$. We here show that $\Ebb[\Phi(X_T^x)]$ is continuously differentiable for a large family of irregular drift coefficients. More precisely, we require $b$ and $\varphi$ in addition to the assumptions for existence and uniqueness of strong solutions to be sufficiently regular in the law variable $z$. For these coefficients the Bismut-Elworthy-Li representation from \cite{Bauer_StrongSolutionsOfMFSDEs} thus holds in a strong sense. As a first step to obtain this result, we also need to study strong differentiability of $X^x$ in its initial condition $x$. In particular, we show that if $b$ and $\varphi$ are continuously differentiable in the space variable $y$ and the law variable $z$ then $X_t^x$ is continuously differentiable in $x$.

The paper is structured as follows. In Section~\ref{sec:Recall} we recall the results from \cite{Bauer_MultiDim} and \cite{Bauer_StrongSolutionsOfMFSDEs} and apply it to the case of mean-field SDEs of type \eqref{lebesgueMFSDE}. These results will be employed in the remaining parts of the paper. In Section~\ref{sec:Solution} we weaken the assumptions on $\varphi$ and show existence, uniqueness, and Malliavin differentiability of solutions of mean-field equation \eqref{lebesgueMFSDE}. Finally, Section~\ref{sec:Regularity} deals with the first variation process $(\partial_x X_t^x )_{t\in[0,T]}$ and provides a Bismut-Elworthy-Li formula for the continuous derivative $\partial_x \Ebb[\Phi(X_T^x)]$ for irregular drift coefficients in the one-dimensional case.

\vspace{1cm}
\textbf{Notation:} Subsequently we list some of the most frequently used notations. For this, let $(\mathcal{X},d_{\mathcal{X}})$ and $(\mathcal{Y},d_{\mathcal{Y}})$ be two metric spaces.
\begin{itemize}
\item By $\Vert \cdot \Vert$ we denote the euclidean norm.
\item $\Ccal(\mathcal{X};\mathcal{Y})$ denotes the space of continuous functions $f:\mathcal{X} \to \mathcal{Y}$. If $\mathcal{X} = \mathcal{Y}$ we write $\Ccal(\mathcal{X}) := \Ccal(\mathcal{X}; \mathcal{X})$.
\item $\Ccal_0^{\infty}(\Xcal)$ denotes the space of smooth functions $f: \Xcal \to \Rbb$ with compact support.
\item For every $C>0$ we define the space $\Lip_C(\mathcal{X};\mathcal{Y})$ of functions $f:\mathcal{X}\to \mathcal{Y}$ such that
\begin{align*}
	d_{\mathcal{Y}}(f(x_1),f(x_2)) \leq C d_{\mathcal{X}}(x_1,x_2), \quad \forall x_1,x_2 \in \mathcal{X}
\end{align*}
as the space of Lipschitz functions with Lipschitz constant $C>0$. Furthermore, we define $\Lip(\mathcal{X};\mathcal{Y}) := \bigcup_{C>0} \Lip_C(\mathcal{X};\mathcal{Y})$ and denote by $\Lip_C(\mathcal{X}):= \Lip_C(\mathcal{X};\mathcal{X})$ and $\Lip(\mathcal{X}) := \Lip(\mathcal{X};\mathcal{X})$, respectively, the space of Lipschitz functions mapping from $\mathcal{X}$ to $\mathcal{X}$.
\item $\Ccal^{1,1}_{b,C}(\Rbb^d)$ denotes the space of continuously differentiable functions $f: \Rbb^d \to \Rbb^d$ such that there exists a constant $C>0$ with
	\begin{enumerate}[(a)]
		\item $\sup_{y\in \Rbb^d} \Vert f'(y) \Vert \leq C$, and
		\item $(y \mapsto f'(y)) \in \Lip_C(\Rbb^d)$.
	\end{enumerate}
	Here $f'$ denotes the Jacobian of $f$. We define $\Ccal^{1,1}_b(\Rbb^d) := \bigcup_{C>0} \Ccal^{1,1}_{b,C}(\Rbb^d)$.
\item $\Cfrak([0,T] \times \Rbb^d \times \Rbb^d)$ is the space of functions $f:[0,T] \times \Rbb^d \times \Rbb^d \to \Rbb^d$ such that there exists a constant $C>0$ with
	\begin{enumerate}[(a)]
		\item $(y \mapsto f(t,y,z)) \in \Ccal_{b,C}^{1,1}(\Rbb^d)$ for all $t\in[0,T]$ and $z \in \Rbb^d$, and
		\item $(z \mapsto f(t,y,z)) \in \Ccal_{b,C}^{1,1}(\Rbb^d)$ for all $t\in[0,T]$ and $y\in \Rbb^d$.
	\end{enumerate}
\item We say a function $f:[0,T]\times \Rbb^d \times \Rbb^d \to \Rbb^d$ is in the space $\Lcal([0,T] \times \Rbb^d \times \Rbb^d)$, if there exists a constant $C>0$ such that for every $t\in[0,T]$ and $y \in \Rbb^d$ the function $(z \mapsto f(t,y,z)) \in \Ccal_{b,C}^{1,1}(\Rbb^d)$.
\item Let $(\Omega, \Fcal, \Fbb, \Pbb)$ be a generic complete filtered probability space with filtration $\Fbb = (\Fcal_t)_{t\in[0,T]}$ and $B = (B_t)_{t\in[0,T]}$ be $d$-dimensional Brownian motion defined on this probability space. Furthermore, we write $\Ebb[\cdot] := \Ebb_{\Pbb}[\cdot]$, if not mentioned differently.
\item $L^p(\Omega)$ denotes the Banach space of functions on the measurable space $(\Omega,\Fcal)$ integrable to some power $p$, $p\geq 1$.
\item $L^p(\Omega,\Fcal_t)$ denotes the space of $\Fcal_t$-measurable functions in $L^p(\Omega)$.
\item We define the weighted $L^p$-space over $\Rbb$ with weight function $\omega: \Rbb \to \Rbb$ as
\begin{align*}
	L^p(\Rbb;\omega) := \left\lbrace f:\Rbb \to\Rbb \text{ measurable : } \int_{\Rbb} |f(y)|^p \omega(y) dy <\infty \right\rbrace.
\end{align*} 
\item Let $f: \Rbb^d \to \Rbb^d$ be a (weakly) differentiable function. Then we denote by $\partial_y f(y):= \frac{\partial f}{\partial y} (y)$ its first (weak) derivative evaluated at $y \in \Rbb^d$ and $\partial_k$ is the Jacobian in the direction of the $k$-th variable.
\item We denote the Doléan-Dade exponential for a progressive process $Y$ with respect to the corresponding Brownian integral if well-defined for $t\in[0,T]$ by $$\Ecal\left( \int_0^t Y_u dB_u \right) := \exp \left\lbrace \int_0^t Y_u dB_u - \frac{1}{2} \int_0^t \Vert Y_u \Vert^2du \right\rbrace.$$
\item We define $B_t^x := x + B_t$, $t\in [0,T]$, for any Brownian motion $B$. 
\item We write $E_1(\theta) \lesssim E_2(\theta)$ for two mathematical expressions $E_1(\theta),E_2(\theta)$ depending on some parameter $\theta$, if there exists a constant $C>0$ not depending on $\theta$ such that $E_1(\theta) \leq C E_2(\theta)$.
\item We denote the Wiener transform of some $Z \in L^2(\Omega,\Fcal_T)$ in $f \in L^2([0,T])$ by
\begin{align*}
	\Wcal(Z)(f) := \Ebb \left[ Z \Ecal\left(\int_0^T f(s) dB_s \right) \right].
\end{align*}
\end{itemize}

\section{Results derived from the general mean-field SDE}\label{sec:Recall}
	In this section we recall sufficient conditions on $b$ and $\varphi$ such that $\bbar$ as defined in \eqref{eq:MFDrift} fulfills the corresponding assumptions for existence, uniqueness, and regularity properties of strong solutions required in \cite{Bauer_MultiDim} and \cite{Bauer_StrongSolutionsOfMFSDEs}.  These results will subsequently be applied in \Cref{sec:Solution,sec:Regularity} in order to weaken the assumptions on $\varphi$ such that mean-field SDE \eqref{lebesgueMFSDE} has a Malliavin differentiable strong solution and to show strong differentiability of this unique strong solution under sufficient conditions on $b$ and $\varphi$. We start by giving the definitions of some frequently used assumptions. \par 
	Let $f:[0,T] \times \Rbb^d \times \Rbb^d \to \Rbb^d$ be a measurable function. The function $f$ is said to be of linear growth, if there exists a constant $C>0$ such that for all $t\in[0,T]$ and $y,z\in \Rbb^d$,
	\begin{align}\label{linearGrowthMF} 
		\Vert f(t,y,z) \Vert &\leq C(1+\Vert y \Vert+\Vert z\Vert).
	\end{align}
	We say $f$ is continuous in the third variable (uniformly with respect to the first and second variable), if for all $z_1 \in \Rbb^d$ and $\varepsilon>0$ there exists a $\delta>0$ such that for all $t\in [0,T]$ and  $y\in \Rbb^d$
	\begin{align}\label{continuityThirdMF}
		\left( \forall z_2 \in \Rbb^d: \Vert z_1-z_2 \Vert < \delta \right) \Rightarrow \Vert f(t,y,z_1) - f(t,y,z_2) \Vert < \varepsilon.
	\end{align}
	Moreover, we say $f$ admits a modulus of continuity in the third variable, if there exists $\theta\in \lbrace \vartheta \in \Ccal(\Rbb_+;\Rbb): \vartheta(z) >0 \text{ and } \int_0^z \frac{dy}{\vartheta(y)} = \infty \text{ } \forall z\in\Rbb_+\rbrace$ such that for all $t\in[0,T]$ and $y,z_1,z_2\in\Rbb^d$
	\begin{align}\label{modulusOfContinuityMean}
		\Vert f(t,y,z_1) - f(t,y,z_2)\Vert^2 \leq \theta \left(\Vert z_1-z_2\Vert^2\right).
	\end{align}
	The function $f$ is said to be Lipschitz continuous in the second, respectively third, variable (uniformly with respect to the other two variables), if there exists a constant $C>0$ such that for all $t\in[0,T]$ and $y_1,y_2,z \in \Rbb^d$
	\begin{align}\label{lipschitzSecondMF}
		\left\Vert f(t,y_1,z) - f(t,y_2,z) \right\Vert \leq C \Vert y_1-y_2\Vert,
	\end{align}
	respectively, such that for all $t\in[0,T]$ and $y,z_1,z_2 \in \Rbb^d$
	\begin{align}\label{lipschitzThirdMF}
		\left\Vert f(t,y,z_1) - f(t,y,z_2) \right\Vert \leq C \Vert z_1-z_2\Vert.
	\end{align}
	Concluding, we say the function $f$ is Lipschitz continuous in the second and third variable (uniformly with respect to the first variable), if it fulfills the Lipschitz assumptions \eqref{lipschitzSecondMF} and \eqref{lipschitzThirdMF}, i.e. there exists a constant $C>0$ such that for all $t\in[0,T]$ and $y_1,y_2,z_1,z_2\in\Rbb^d$
	\begin{align}\label{eq:lipschitzSecondThird}
		\left\Vert f(t,y_1,z_1) - f(t,y_2,z_2) \right\Vert \leq C \left( \Vert y_1 - y_2 \Vert + \Vert z_1 - z_2 \Vert \right).
	\end{align}
	Note that when we talk about (Lipschitz) continuity in a certain variable, we always understand the continuity to hold uniformly with respect to the other variables. \par \vspace{0.3cm}
	
	We start by deriving sufficient conditions on $b$ and $\varphi$ from \cite{Bauer_MultiDim} and \cite{Bauer_StrongSolutionsOfMFSDEs} for existence and uniqueness of solutions of mean-field SDE \eqref{lebesgueMFSDE}. For detailed definitions of the notions weak and strong solution as well as pathwise uniqueness and uniqueness in law - as used subsequently - we refer the reader to these same papers.
	\par

	From \cite[Theorems 3.7]{Bauer_MultiDim} we obtain in the following corollary the assumptions on $b$ and $\varphi$ to ensure the existence of a strong solution of \eqref{lebesgueMFSDE}.	
		
	\begin{corollary}
		Let $b:[0,T] \times \Rbb^d \times \Rbb^d \to \Rbb^d$ be a measurable function of at most linear growth \eqref{linearGrowthMF} and continuous in the third variable \eqref{continuityThirdMF}. Furthermore, assume that $\varphi:[0,T] \times \Rbb^d \times \Rbb^d \to \Rbb^d$ is a measurable functional which is of at most linear growth \eqref{linearGrowthMF} and Lipschitz continuous in the third variable \eqref{lipschitzThirdMF}. Then mean-field SDE \eqref{lebesgueMFSDE} has a strong solution. \par
		If in addition $b$ admits a modulus of continuity in the third variable \eqref{modulusOfContinuityMean}, the solution is pathwisely unique.
	\end{corollary}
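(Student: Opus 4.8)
The plan is to show that the composite drift $\bbar = b\diamond\varphi$ from \eqref{eq:MFDrift} satisfies the hypotheses imposed on the drift of the general mean-field SDE \eqref{eq:RegMainMcKeanVlasov} in \cite[Theorem 3.7]{Bauer_MultiDim}, and then to invoke that theorem with $\overline\sigma\equiv1$, so that a strong solution of \eqref{eq:RegMainMcKeanVlasov} for this choice of coefficients is precisely a strong solution of \eqref{lebesgueMFSDE}. Concretely, three things have to be checked: (i) $\bbar$ is of linear growth jointly in the space variable $y$ and the law variable $\mu$ (the latter measured through the first moment of $\mu$); (ii) $\mu\mapsto\bbar(t,y,\mu)$ is continuous, uniformly in $(t,y)$, with respect to the metric on $\Pcal_1(\Rbb^d)$ used in \cite{Bauer_MultiDim}; and (iii) for the uniqueness claim, $\bbar$ admits a modulus of continuity in the law variable whenever $b$ does in its third variable. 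Writing $\Psi(t,y,\mu):=\int_{\Rbb^d}\varphi(t,y,z)\,\mu(dz)$, all three reduce to controlling $\Psi$ and composing with the corresponding property of $b$.

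For (i), Jensen's inequality together with the linear growth \eqref{linearGrowthMF} of $\varphi$ gives $\Vert\Psi(t,y,\mu)\Vert\leq\int_{\Rbb^d}\Vert\varphi(t,y,z)\Vert\,\mu(dz)\leq C(1+\Vert y\Vert+\int_{\Rbb^d}\Vert z\Vert\,\mu(dz))$, and composing with the linear growth of $b$ yields $\Vert\bbar(t,y,\mu)\Vert=\Vert b(t,y,\Psi(t,y,\mu))\Vert\lesssim 1+\Vert y\Vert+\int_{\Rbb^d}\Vert z\Vert\,\mu(dz)$, which is exactly the growth bound needed. For (ii), the decisive observation is that $\varphi(t,y,\cdot)\in\Lip_C(\Rbb^d;\Rbb^d)$ with $C$ independent of $(t,y)$, so by Kantorovich--Rubinstein duality $\Vert\Psi(t,y,\mu_1)-\Psi(t,y,\mu_2)\Vert\leq C\,\mathcal{W}_1(\mu_1,\mu_2)$ for all $t,y$; that is, $\mu\mapsto\Psi(t,y,\mu)$ is $\mathcal{W}_1$-Lipschitz uniformly in $(t,y)$. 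Given $\varepsilon>0$, picking $\delta>0$ from the uniform continuity \eqref{continuityThirdMF} of $b$ in its third variable, any $\mu_1,\mu_2$ with $\mathcal{W}_1(\mu_1,\mu_2)<\delta/C$ satisfy $\Vert\bbar(t,y,\mu_1)-\bbar(t,y,\mu_2)\Vert<\varepsilon$ for all $t,y$, which is the required continuity. An application of \cite[Theorem 3.7]{Bauer_MultiDim} then produces a strong solution of \eqref{lebesgueMFSDE}.

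For (iii), assume in addition that $b$ admits the modulus of continuity \eqref{modulusOfContinuityMean} with some $\theta$, which (as usual) may be taken non-decreasing by passing to a suitable majorant with the same divergence property of $\int_0^{\cdot}dy/\vartheta(y)$. Then, using the Lipschitz bound on $\Psi$ from step (ii), $\Vert\bbar(t,y,\mu_1)-\bbar(t,y,\mu_2)\Vert^2\leq\theta(\Vert\Psi(t,y,\mu_1)-\Psi(t,y,\mu_2)\Vert^2)\leq\theta(C^2\,\mathcal{W}_1(\mu_1,\mu_2)^2)$, and since absorbing the constant $C^2$ into $\theta$ preserves the defining integral divergence, $\bbar$ inherits a modulus of continuity in the law variable of the form required by the uniqueness part of \cite[Theorem 3.7]{Bauer_MultiDim}. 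Pathwise uniqueness of the solution follows.

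The estimates themselves are elementary; the only delicate points are bookkeeping ones, namely reconciling the precise metric on $\Pcal_1(\Rbb^d)$ and the exact formulation of the modulus-of-continuity hypothesis adopted in \cite{Bauer_MultiDim} with the $\mathcal{W}_1$-based bounds above, together with the (standard) reduction to a non-decreasing $\theta$ and the harmlessness of the multiplicative constant inside it.
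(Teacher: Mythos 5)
Your proposal is correct and is exactly the argument the paper intends: the corollary is stated as a direct consequence of \cite[Theorem 3.7]{Bauer_MultiDim}, and the (implicit) verification is precisely yours, namely that $\mu\mapsto\int_{\Rbb^d}\varphi(t,y,z)\,\mu(dz)$ is of linear growth in the first moment and Kantorovich--Lipschitz uniformly in $(t,y)$ by the Lipschitz assumption \eqref{lipschitzThirdMF} on $\varphi$, so that the growth, continuity and modulus-of-continuity hypotheses on $b$ transfer to $\bbar=b\diamond\varphi$ in the law variable. The bookkeeping caveats you mention (the exact metric on $\Pcal_1(\Rbb^d)$ and absorbing the constant into $\theta$) are likewise left implicit in the paper, so there is nothing further to reconcile.
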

	
 Concerning Malliavin differentiability of the solution we obtain from \cite[Theorem 4.1]{Bauer_MultiDim}:

	\begin{corollary}\label{cor:strongSolutionRecall}
		Let $b$ be a bounded measurable function which is continuous in the third variable \eqref{continuityThirdMF}. Furthermore, assume that $\varphi$ is a measurable functional which is of at most linear growth \eqref{linearGrowthMF} and Lipschitz continuous in the third variable \eqref{lipschitzThirdMF}. Then mean-field SDE \eqref{lebesgueMFSDE} has a Malliavin differentiable strong solution.
	\end{corollary}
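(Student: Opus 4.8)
The plan is to verify that the composite drift $\bbar = b \diamond \varphi$ of \eqref{eq:MFDrift} satisfies the regularity hypotheses imposed on the drift coefficient in \cite[Theorem 4.1]{Bauer_MultiDim} — boundedness, measurability in $(t,y)$, and continuity in the law variable — and then to invoke that theorem, using the identification of \eqref{lebesgueMFSDE} as the special case $\bbar = b\diamond\varphi$, $\overline{\sigma}\equiv 1$, of the general equation \eqref{eq:RegMainMcKeanVlasov}.

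First, since $\varphi$ is of at most linear growth \eqref{linearGrowthMF}, the inner integral $\int_{\Rbb^d}\varphi(t,y,z)\,\mu(dz)$ is finite for every $\mu\in\Pcal_1(\Rbb^d)$, so $\bbar$ is well defined; moreover $\Vert \bbar(t,y,\mu)\Vert \le \sup \Vert b\Vert < \infty$, which gives boundedness. Joint measurability of $(t,y)\mapsto\int_{\Rbb^d}\varphi(t,y,z)\,\mu(dz)$ follows from measurability of $\varphi$ together with the linear growth bound (approximate $\varphi$ by simple functions and pass to the limit), and composing with the measurable map $b$ yields measurability of $\bbar$ in $(t,y)$.

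The crux is continuity in the law variable. By the Lipschitz assumption \eqref{lipschitzThirdMF}, for each fixed $t,y$ the map $z\mapsto\varphi(t,y,z)$ is Lipschitz with a constant $C$ not depending on $(t,y)$, so Kantorovich--Rubinstein duality gives, for $\mu,\nu\in\Pcal_1(\Rbb^d)$,
\[
  \left\Vert \int_{\Rbb^d}\varphi(t,y,z)\,\mu(dz) - \int_{\Rbb^d}\varphi(t,y,z)\,\nu(dz) \right\Vert \le C\,W_1(\mu,\nu),
\]
with $W_1$ the $1$-Wasserstein distance. Hence convergence $\mu_n\to\mu$ in $\Pcal_1(\Rbb^d)$ forces $\int\varphi(t,\cdot,z)\,\mu_n(dz)\to\int\varphi(t,\cdot,z)\,\mu(dz)$ uniformly in $(t,y)$, and feeding this into the uniform continuity \eqref{continuityThirdMF} of $b$ in its third argument yields $\bbar(t,y,\mu_n)\to\bbar(t,y,\mu)$ uniformly in $(t,y)$, i.e. $\bbar$ is continuous in the law variable in the sense required there. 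Applying \cite[Theorem 4.1]{Bauer_MultiDim} then produces a Malliavin differentiable strong solution of \eqref{lebesgueMFSDE}.

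I expect the only delicate point to be this third step: one must check that the topology on $\Pcal_1(\Rbb^d)$ used in \cite{Bauer_MultiDim} is (at least) controlled by $W_1$, so that the Lipschitz-in-$z$ hypothesis on $\varphi$ translates into the continuity demanded there, and that this continuity holds uniformly in $(t,y)$ so that it can be composed with \eqref{continuityThirdMF}; the boundedness and measurability verifications are routine.
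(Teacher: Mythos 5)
Your proposal is correct and follows essentially the same route as the paper: the corollary is obtained by checking that $\bbar = b \diamond \varphi$ from \eqref{eq:MFDrift} is bounded, measurable, and continuous in the law variable with respect to the Kantorovich ($W_1$) metric used in \cite{Bauer_MultiDim} — which is exactly where the Lipschitz assumption \eqref{lipschitzThirdMF} on $\varphi$ enters — and then invoking \cite[Theorem 4.1]{Bauer_MultiDim}. Your verification via Kantorovich–Rubinstein duality, composed with the uniform continuity \eqref{continuityThirdMF} of $b$ in its third argument, is precisely the intended argument.
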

	
	\begin{remark}
			In the one-dimensional case, $d=1$, \Cref{cor:strongSolutionRecall} can be generalized in the following way due to \cite{Bauer_StrongSolutionsOfMFSDEs}. Let $(b\diamond \varphi)$ allow for a decomposition of the form
	\begin{align}\label{formDriftMF}
		(b\diamond \varphi)(t,y,\mu) & := \bhat\left(t,y,\int_\Rbb \hat{\varphi}(t,y,z) \mu(dz) \right) + \btilde\left(t,y,\int_\Rbb \tilde{\varphi}(t,y,z) \mu(dz) \right),
	\end{align}
	where the drift $\hat{b}$ is merely measurable and bounded and the functional $\hat{\varphi}$ is of linear growth \eqref{linearGrowthMF} and Lipschitz continuous in the third variable \eqref{lipschitzThirdMF}. Moreover, the drift $\tilde{b}$ is of at most linear growth \eqref{linearGrowthMF} and Lipschitz continuous in the second variable \eqref{lipschitzSecondMF} whereas the functional $\tilde{\varphi}$ is of at most linear growth \eqref{linearGrowthMF} and Lipschitz continuous in the second and third variable \eqref{eq:lipschitzSecondThird}. Then, mean-field SDE \eqref{lebesgueMFSDE} has a Malliavin differentiable unique strong solution and the Malliavin derivative admits for $0\leq s \leq t \leq T$ the representation
	\begin{align}\label{eq:derivativeMalliavin}
		D_sX_t^x = \exp \left\lbrace - \int_s^t \int_{\Rbb} b \left(u,y, \int_\Rbb \varphi(u,y,z) \Pbb_{X_u^x}(dz) \right) L^{X^x}(du,dy) \right\rbrace.
	\end{align}
	Here, $L^{X^x}(du,dy)$ denotes integration with respect to local time of $X^x$ in time and space, see \cite{MeyerBrandisBanosDuedahlProske_ComputingDeltas} and \cite{Eisenbaum_LocalTimeIntegral} for more details. If in addition $b$ is continuously differentiable with respect to the second and third variable and $\varphi$ is continuously differentiable with respect to the second variable, representation \eqref{eq:derivativeMalliavin} can be written as
	\begin{align*}
		D_sX_t^x = &\exp \left\lbrace \int_s^t \partial_2 b\left(u, X_u^x, \int_\Rbb \varphi(u,X_u^x,z) \Pbb_{X_u^x}(dz) \right) \right.\\
		&\qquad +\left. \partial_3 b\left(u, X_u^x, \int_\Rbb \varphi(u,X_u^x,z) \Pbb_{X_u^x}(dz) \right) \int_\Rbb \partial_2 \varphi(u,X_u^x, z) \Pbb_{X_u^x}(dz)  du \right\rbrace. \notag
	\end{align*}
	Here, $\partial_2$ and $\partial_3$ denotes the derivative with respect to the second and third variable, respectively.
	\end{remark}

	Next we state a result on the regularity of a strong solution of \eqref{lebesgueMFSDE} in its initial condition which is due to \cite[Theorem 4.3]{Bauer_MultiDim}.

	\begin{corollary}\label{cor:SobolevDifferentiableRecall}
		Let $b$ be a bounded measurable function which is Lipschitz continuous in the third variable \eqref{lipschitzThirdMF}. Furthermore, assume that $\varphi$ is a measurable functional which is of at most linear growth \eqref{linearGrowthMF} and Lipschitz continuous in the third variable \eqref{lipschitzThirdMF}. Then, the unique strong solution $(X_t^x)_{t\in[0,T]}$ of mean-field SDE \eqref{lebesgueMFSDE} is Sobolev differentiable in the initial condition $x$.
	\end{corollary}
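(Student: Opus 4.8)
The plan is to verify that the composite drift $\bbar:=b\diamond\varphi$ defined in \eqref{eq:MFDrift} inherits from the assumptions on $b$ and $\varphi$ exactly the properties that \cite[Theorem 4.3]{Bauer_MultiDim} demands of the drift of the general mean-field SDE \eqref{eq:RegMainMcKeanVlasov}, and then to apply that theorem. Since putting $\overline{\sigma}\equiv 1$ together with $\bbar=b\diamond\varphi$ realises \eqref{lebesgueMFSDE} as a special case of \eqref{eq:RegMainMcKeanVlasov}, the asserted Sobolev differentiability of $x\mapsto X_t^x$ will then follow at once. For this conclusion the hypotheses on the drift in \cite{Bauer_MultiDim} amount to: (i) boundedness; (ii) joint measurability in $(t,y,\mu)$; and (iii) Lipschitz continuity in $\mu$, uniformly in $(t,y)$, with respect to the Kantorovich--Rubinstein (Wasserstein-$1$) metric $W_1$ on $\Pcal_1(\Rbb^d)$. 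Uniqueness of the strong solution, needed for the wording ``\emph{the} unique strong solution'', is already guaranteed, since the Lipschitz condition \eqref{lipschitzThirdMF} on $b$ in its third variable is in particular a modulus of continuity in the sense of \eqref{modulusOfContinuityMean}, and existence is likewise inherited because a bounded function is of linear growth \eqref{linearGrowthMF}.

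Checking (i)--(iii) is then routine. The linear growth \eqref{linearGrowthMF} of $\varphi$ and $\mu\in\Pcal_1(\Rbb^d)$ make $\int_{\Rbb^d}\varphi(t,y,z)\,\mu(dz)$ finite, so $\bbar$ is well defined, and $\Vert\bbar(t,y,\mu)\Vert=\Vert b(t,y,\int_{\Rbb^d}\varphi(t,y,z)\,\mu(dz))\Vert\le\sup_{(s,u,v)}\Vert b(s,u,v)\Vert<\infty$, which is (i). For (iii), fix $(t,y)$: the map $z\mapsto\varphi(t,y,z)$ is Lipschitz with a constant not depending on $(t,y)$, so Kantorovich--Rubinstein duality, applied to each coordinate, gives $\Vert\int_{\Rbb^d}\varphi(t,y,z)\,\mu(dz)-\int_{\Rbb^d}\varphi(t,y,z)\,\nu(dz)\Vert\lesssim W_1(\mu,\nu)$ for all $\mu,\nu\in\Pcal_1(\Rbb^d)$; composing with the Lipschitz bound \eqref{lipschitzThirdMF} for $b$ in its third variable yields $\Vert\bbar(t,y,\mu)-\bbar(t,y,\nu)\Vert\lesssim W_1(\mu,\nu)$ uniformly in $(t,y)$, which is (iii). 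For (ii), the functional $(t,y,\mu)\mapsto\int_{\Rbb^d}\varphi(t,y,z)\,\mu(dz)$ is, for fixed $\mu$, measurable in $(t,y)$ by the joint measurability of $\varphi$ and Fubini--Tonelli, and by the estimate just established it is continuous in $\mu$; being of Carathéodory type it is therefore jointly measurable, and composition with the measurable $b$ shows that $\bbar$ is jointly measurable, giving (ii).

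With (i)--(iii) verified, \cite[Theorem 4.3]{Bauer_MultiDim} applies to \eqref{eq:RegMainMcKeanVlasov} with $\overline{\sigma}\equiv 1$ and drift $\bbar=b\diamond\varphi$, and yields precisely the Sobolev differentiability in the initial condition of the unique strong solution of \eqref{lebesgueMFSDE}. The argument is in essence only bookkeeping; the points that deserve a little care are (a) to make sure the metric on $\Pcal_1(\Rbb^d)$ underlying the Lipschitz-in-law hypothesis of \cite{Bauer_MultiDim} is indeed $W_1$ (or a metric dominated by it), so that the bound $\Vert\int_{\Rbb^d}\varphi(t,y,z)\,\mu(dz)-\int_{\Rbb^d}\varphi(t,y,z)\,\nu(dz)\Vert\lesssim W_1(\mu,\nu)$ is exactly what is required, and (b) the joint measurability of the inner-integral functional, handled by the Carathéodory observation above. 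I do not expect a genuine obstacle here: the whole content of the statement is the transcription of the regularity of $b$ and $\varphi$ into the regularity of $\bbar$.
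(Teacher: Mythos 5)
Your proposal is correct and follows the paper's own (implicit) route: the corollary is obtained exactly by checking that the composite drift $\bbar=b\diamond\varphi$ with $\overline{\sigma}\equiv 1$ is bounded, measurable and Lipschitz in the law variable with respect to the Kantorovich metric --- via the Lipschitz continuity of $\varphi$ in the third variable and Kantorovich--Rubinstein duality --- and then applying \cite[Theorem 4.3]{Bauer_MultiDim}. Your verification of (i)--(iii) is precisely the bookkeeping the paper leaves to the reader, so there is nothing to add.
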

	
	\begin{remark}
		In the one-dimensional case, $d=1$, we further get due to \cite[Theorem 3.3 \& Proposition 3.4]{Bauer_StrongSolutionsOfMFSDEs} for $(b\diamond \varphi)$ allowing for a decomposition \eqref{formDriftMF} that the first variation process $(\partial_x X_t^x)_{t\in[0,T]}$ has for almost all $x \in K$, where $K\subset \Rbb$ is a compact subset, the representation
		\begin{small}
		\begin{align}\label{eq:RegRepresentationFV}
		\partial_x X_t^x &= \exp \left\lbrace - \int_0^t \int_\Rbb (b\diamond \varphi)\left(s,y, \Pbb_{X_s^x}\right) L^{X^x}(ds,dy) \right\rbrace \\
		&\quad+ \int_0^t \exp \left\lbrace - \int_u^t \int_\Rbb (b\diamond \varphi) \left(s,y,\Pbb_{X_s^x} \right) L^{X^x}(ds,dy) \right\rbrace \partial_x (b\diamond \varphi) \left(s,y, \Pbb_{X_u^x} \right)\Big\vert_{y= X_u^x} du. \notag
	\end{align}
		\end{small}
	Moreover, for  $0\leq s \leq t \leq T$ the following relationship with the Malliavin Derivative holds:
	\begin{align}\label{eq:RegRelationDerivatives}
		\partial_x X_t^x = D_sX_t^x \partial_x X_s^x + \int_s^t D_uX_t^x \partial_x (b\diamond \varphi) \left(u,y, \Pbb_{X_u^x} \right) \Big\vert_{y=X_u^x} du \,.
	\end{align}
	\end{remark}

	Furthermore, the unique strong solution is Hölder continuous in time and the initial condition which is due to \cite[Theorem 4.12]{Bauer_MultiDim}.
	
	\begin{corollary}\label{cor:Hölder}
		Let $b$ be a bounded measurable function which is Lipschitz continuous in the third variable \eqref{lipschitzThirdMF}. Furthermore, assume that $\varphi$ is a measurable functional which is of at most linear growth \eqref{linearGrowthMF} and Lipschitz continuous in the third variable \eqref{lipschitzThirdMF}. Let $(X_t^x)_{t\in[0,T]}$ be the unique strong solution of mean-field SDE \eqref{lebesgueMFSDE}. Then for every compact subset $K\subset \Rbb^d$ there exists a constant $C > 0$ such that for all $s,t\in[0,T]$ and $x,y \in K$,
	\begin{align}\label{eq:RegHoelderContinuity}
		\Ebb [\Vert X_t^x - X_s^y \Vert^2] \leq C(|t-s| + \Vert x-y\Vert^2).
	\end{align}
	In particular, there exists a continuous version of the random field $(t,x) \mapsto X_t^x$ with Hölder continuous trajectories of order $\alpha < \frac{1}{2}$ in $t\in[0,T]$ and $\alpha<1$ in $x\in \Rbb^d$.
\end{corollary}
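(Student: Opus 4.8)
The plan is to obtain the statement as an immediate consequence of \cite[Theorem 4.12]{Bauer_MultiDim}, which treats the general mean-field SDE \eqref{eq:RegMainMcKeanVlasov}: putting $\overline{\sigma}\equiv 1$ and $\bbar = b\diamond\varphi$ as in \eqref{eq:MFDrift}, both conclusions of the corollary --- the moment estimate \eqref{eq:RegHoelderContinuity} and the existence of a H\"older-continuous modification --- are precisely the conclusions of that theorem, so the only point to check is that this particular $\bbar$ meets its standing hypotheses, namely boundedness together with Lipschitz continuity in the law variable with respect to the $1$-Wasserstein distance $W_1$ on $\Pcal_1(\Rbb^d)$. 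Boundedness is immediate from \eqref{eq:MFDrift} and boundedness of $b$, since $\Vert\bbar(t,y,\mu)\Vert\le\Vert b\Vert_\infty$ for all $(t,y,\mu)$. For the law dependence, fix $t\in[0,T]$ and $y\in\Rbb^d$, let $\mu,\nu\in\Pcal_1(\Rbb^d)$ and let $\pi$ be any coupling of $\mu$ and $\nu$; since $\varphi(t,y,\cdot)$ is $C$-Lipschitz by \eqref{lipschitzThirdMF},
\[
\left\Vert \int_{\Rbb^d}\varphi(t,y,z)\,\mu(dz) - \int_{\Rbb^d}\varphi(t,y,z)\,\nu(dz) \right\Vert \le \int_{\Rbb^d\times\Rbb^d} \left\Vert \varphi(t,y,z)-\varphi(t,y,z') \right\Vert \pi(dz,dz') \le C\int \Vert z-z'\Vert\,\pi(dz,dz'),
\]
and taking the infimum over couplings $\pi$ bounds the left-hand side by $C\,W_1(\mu,\nu)$; composing with $b$, which is $C$-Lipschitz in its third argument by \eqref{lipschitzThirdMF}, gives $\Vert\bbar(t,y,\mu)-\bbar(t,y,\nu)\Vert\le C^2 W_1(\mu,\nu)$ uniformly in $(t,y)$, while joint measurability of $(t,y,\mu)\mapsto\bbar(t,y,\mu)$ follows from measurability of $b$ and $\varphi$ together with this $W_1$-continuity. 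This is the same verification that underlies \Cref{cor:SobolevDifferentiableRecall}, and with it \cite[Theorem 4.12]{Bauer_MultiDim} applies and yields the claim.

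For completeness I indicate how the H\"older modification is recovered from the moment bound alone. First one upgrades \eqref{eq:RegHoelderContinuity} to $\Ebb[\Vert X_t^x - X_s^y\Vert^p]\le C_{p,K}(|t-s|+\Vert x-y\Vert^2)^{p/2}$ for every $p\ge 2$ and $x,y\in K$; this is legitimate because $\bbar$ is bounded, so by Girsanov's theorem $X^x$ is, under an equivalent probability measure, a Brownian motion started at $x$, and the associated Dol\'ean-Dade density satisfies Novikov's condition and hence has moments of every order, which allows the argument behind \cite[Theorem 4.12]{Bauer_MultiDim} to be run in $L^p$ in place of $L^2$ (alternatively one interpolates the $L^2$-estimate against uniform-in-$(t,x)$ $L^q$-moments of $X^x$). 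Writing $\rho((t,x),(s,y)):=(|t-s|+\Vert x-y\Vert^2)^{1/2}$, this is a metric on $[0,T]\times\Rbb^d$ for which a ball of radius $r$ has Lebesgue content of order $r^{d+2}$, and the $L^p$-estimate reads $\Ebb[\Vert X_t^x - X_s^y\Vert^p]\le C_{p,K}\,\rho((t,x),(s,y))^p$; choosing $p>d+2$ and invoking the Kolmogorov--Chentsov continuity theorem on $([0,T]\times K,\rho)$ produces a continuous modification of $(t,x)\mapsto X_t^x$ with $\rho$-H\"older trajectories of any order $\gamma<1-(d+2)/p$, i.e. H\"older of order $\gamma/2$ in $t$ and $\gamma$ in $x$; letting $p\to\infty$ gives the exponents $\alpha<\tfrac12$ in $t$ and $\alpha<1$ in $x$.

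The genuine difficulty lies not in this corollary but in its input \eqref{eq:RegHoelderContinuity}: establishing a joint spatial-temporal $L^2$-estimate for the solution flow when $\bbar$ is merely measurable in the space variable. A naive Gronwall estimate on $X_t^x - X_s^y$ is unavailable, because there is no Lipschitz control of $\int_0^t(\bbar(u,X_u^x,\Pbb_{X_u^x})-\bbar(u,X_u^y,\Pbb_{X_u^y}))\,du$ in the space variable; circumventing this is precisely what forces the local-time / Malliavin-calculus techniques of \cite{Bauer_MultiDim} (a Zvonkin-type transformation, or an It\^o--Tanaka representation of the drift increment), combined with a fixed-point argument that feeds the $W_1$-Lipschitz dependence of $\bbar$ on the law --- established above --- through the coupled measure flows $\Pbb_{X^x}$ and $\Pbb_{X^y}$. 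In the present corollary all of this is black-boxed, so that the only work is the hypothesis verification above.
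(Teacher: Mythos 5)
Your proposal is correct and follows essentially the same route as the paper: the paper obtains this corollary directly from \cite[Theorem 4.12]{Bauer_MultiDim} by taking $\overline{\sigma}\equiv 1$ and $\bbar = b\diamond\varphi$, the only substance being exactly the hypothesis check you carry out (boundedness of $\bbar$ from boundedness of $b$, and Lipschitz continuity in the measure variable in the Wasserstein/Kantorovich metric from the Lipschitz continuity of $b$ and $\varphi$ in their third arguments). Your additional Kolmogorov--Chentsov discussion is extra detail the paper leaves inside the cited theorem, but it does not change the argument.
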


	Finally, from \cite[Theorem 5.1]{Bauer_MultiDim} we get the following Bismut-Elworthy-Li type formula under the same assumptions as in \Cref{cor:SobolevDifferentiableRecall}.

	\begin{corollary}\label{cor:BismutFormulaRecall}
		Let $b$ be a bounded measurable function which is Lipschitz continuous in the third variable \eqref{lipschitzThirdMF}. Furthermore, assume that $\varphi$ is a measurable functional which is of at most linear growth \eqref{linearGrowthMF} and Lipschitz continuous in the third variable \eqref{lipschitzThirdMF}. Furthermore, let $\Phi \in L^{2p}(\Rbb^d;\omega_T)$ with $p:= \frac{1+\varepsilon}{\varepsilon}$, $\varepsilon>0$ sufficiently small with regard to \Cref{lem:RegBoundsSolution}, and $\omega_T(y) := \exp \left\lbrace - \frac{\Vert y \Vert^2}{4T} \right\rbrace$. Then, the expectation functional $ \Ebb\left[ \Phi(X_t^x) \right]$ is Sobolev differentiable in the initial condition and the derivative $\partial_x \Ebb\left[ \Phi(X_T^x) \right]$ admits for almost all $x\in K$, where $K \subset \Rbb^d$ is a compact subset, the representation
	\begin{footnotesize}
	\begin{align}\label{eq:RegDelta}
		\partial_x \Ebb[\Phi(X_T^x)] = \Ebb \left[ \Phi(X_T^x) \left( \int_0^T a(s) \partial_x X_s^x + \partial_x (b\diamond \varphi) \left(s,y,\Pbb_{X_s^x} \right)\vert_{y=X_s^x} \int_0^s a(u) du dB_s \right) \right],
	\end{align}
	\end{footnotesize}
	where $a: \Rbb  \to \Rbb$ is any bounded, measurable function such that
	\begin{align*}
		\int_0^T a(s) ds = 1.
	\end{align*}
	\end{corollary}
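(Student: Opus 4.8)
The plan is to obtain the statement directly from the Bismut--Elworthy--Li formula for the general mean-field SDE \eqref{eq:RegMainMcKeanVlasov} established in \cite[Theorem 5.1]{Bauer_MultiDim}, specialised to $\overline\sigma \equiv 1$ and to the drift $\bbar = b \diamond \varphi$ of \eqref{eq:MFDrift}. Hence the essential task is to check that, under the present hypotheses on $b$ and $\varphi$, the coefficient $\bbar$ satisfies the assumptions of that theorem --- boundedness, measurability, and Lipschitz continuity in the law variable with respect to the Kantorovich--Wasserstein distance $\Wcal_1$ on $\Pcal_1(\Rbb^d)$ --- and then to transcribe the conclusion.

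Boundedness and measurability of $\bbar(t,y,\mu) = b\big(t,y,\int_{\Rbb^d}\varphi(t,y,z)\,\mu(dz)\big)$ are immediate: since $\varphi$ is measurable and of linear growth, $\mu \mapsto \int_{\Rbb^d}\varphi(t,y,z)\,\mu(dz)$ is well defined and measurable on $\Pcal_1(\Rbb^d)$, and composing with the bounded measurable $b$ preserves both properties. For the Lipschitz property in $\mu$ I would use a coupling argument: for $\mu,\nu \in \Pcal_1(\Rbb^d)$ and any coupling $\pi$ of $(\mu,\nu)$, the Lipschitz bound \eqref{lipschitzThirdMF} on $\varphi$ gives
\begin{align*}
\left\Vert \int_{\Rbb^d}\varphi(t,y,z)\,\mu(dz) - \int_{\Rbb^d}\varphi(t,y,z)\,\nu(dz) \right\Vert \leq C \int_{\Rbb^d\times\Rbb^d} \Vert z_1 - z_2 \Vert\, \pi(dz_1,dz_2),
\end{align*}
and minimising over $\pi$ bounds the left-hand side by $C\,\Wcal_1(\mu,\nu)$; composing with the Lipschitz bound \eqref{lipschitzThirdMF} on $b$ then yields $\Vert \bbar(t,y,\mu) - \bbar(t,y,\nu) \Vert \leq C^2\,\Wcal_1(\mu,\nu)$, uniformly in $(t,y)$. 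This is exactly the regularity in the measure argument needed in \cite[Theorem 5.1]{Bauer_MultiDim}; it is also the hypothesis of \Cref{cor:SobolevDifferentiableRecall}, so that $(X_t^x)_{t\in[0,T]}$ is Sobolev differentiable in $x$ and the term $\partial_x(b\diamond\varphi)(s,y,\Pbb_{X_s^x})\vert_{y=X_s^x}$ in \eqref{eq:RegDelta} --- the $x$-derivative of the law-slot dependence, with the irregular space dependence carried by the local-time exponential of \eqref{eq:RegRepresentationFV} --- makes sense for a.e.\ $x \in K$.

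It remains to match the assumption on the terminal functional and on $a$. The condition $\Phi \in L^{2p}(\Rbb^d;\omega_T)$ with $p = \frac{1+\varepsilon}{\varepsilon}$ and $\omega_T(y) = \exp\{-\Vert y\Vert^2/(4T)\}$ is precisely what \cite[Theorem 5.1]{Bauer_MultiDim} requires, and together with the Gaussian-type moment and density estimates for $X^x$ recorded in \Cref{lem:RegBoundsSolution} it guarantees that $\Phi(X_T^x)$ multiplied by the Bismut weight in \eqref{eq:RegDelta} is integrable --- the smallness of $\varepsilon$ being what aligns the Hölder conjugate exponents with those estimates. Since every bounded measurable $a:\Rbb\to\Rbb$ with $\int_0^T a(s)\,ds = 1$ is admissible there, the representation \eqref{eq:RegDelta} then follows verbatim.

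The main obstacle is the bookkeeping in the first step: showing that the composed drift $b\diamond\varphi$ inherits, from the \emph{separate} and individually weaker hypotheses on $b$ and $\varphi$, exactly the package of boundedness, measurability and $\Wcal_1$-Lipschitz continuity in the law variable under which \cite[Theorem 5.1]{Bauer_MultiDim} is stated; and, as a related subtlety, making sure that the first-variation term $\partial_x(b\diamond\varphi)$ occurring in \eqref{eq:RegDelta} is read correctly, i.e.\ as the $x$-derivative of the map $x \mapsto (b\diamond\varphi)(s,y,\Pbb_{X_u^x})$ taken before substituting $y = X_u^x$, consistently with \eqref{eq:RegRepresentationFV}--\eqref{eq:RegRelationDerivatives}. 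Once this translation of hypotheses is carried out, no further computation is needed.
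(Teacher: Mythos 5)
Your proposal is correct and follows the paper's own route: the paper obtains \Cref{cor:BismutFormulaRecall} simply by invoking \cite[Theorem 5.1]{Bauer_MultiDim} for $\overline{\sigma}\equiv 1$ and $\bbar = b\diamond\varphi$, after noting (as in \Cref{cor:SobolevDifferentiableRecall}) that boundedness of $b$ and the Lipschitz bounds \eqref{lipschitzThirdMF} on $b$ and $\varphi$ make $\bbar$ bounded, measurable and Lipschitz in the law variable with respect to the Kantorovich metric. Your coupling verification of that Lipschitz property and the matching of the $\Phi\in L^{2p}(\Rbb^d;\omega_T)$ condition are exactly the (unwritten) bookkeeping behind the paper's one-line derivation.
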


\section{Existence and Uniqueness of Solutions }\label{sec:Solution}

The results in \Cref{sec:Recall} presume Lipschitz continuity of the function $\varphi$. In this section we are interested in showing existence and uniqueness of strong solutions under weakened regularity assumptions on $\varphi$. In particular, this will allow to consider mean-field SDEs where the drift depends on the solution law in form of indicator and distribution functions, respectively.

\begin{theorem}\label{extensionSolution}
	Let $b:[0,T] \times \Rbb^d \times \Rbb^d \to \Rbb^d$ be of at most linear growth \eqref{linearGrowthMF} and continuous in the third variable \eqref{continuityThirdMF}. Further, let $\varphi:[0,T] \times \Rbb^d \times \Rbb^d \to \Rbb^d$ be of at most linear growth \eqref{linearGrowthMF}. Then mean-field SDE \eqref{lebesgueMFSDE} has a strong solution. \par 
	If in addition $b$ is Lipschitz continuous in the third variable \eqref{lipschitzThirdMF}, the solution is unique.
\end{theorem}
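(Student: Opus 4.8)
The plan is to bootstrap from the results in Section~\ref{sec:Recall}, where $\varphi$ is required to be Lipschitz in the third variable, to the present case where $\varphi$ is merely measurable and of linear growth, by an approximation argument at the level of $\varphi$. First I would fix a smooth mollifier and set $\varphi_n := \varphi * \rho_n$ (convolution in the third variable only, or in $(y,z)$ jointly), so that each $\varphi_n$ is Lipschitz continuous in the third variable with a constant that may blow up in $n$, but crucially the linear growth constant $C$ in \eqref{linearGrowthMF} is preserved uniformly in $n$, and $\varphi_n(t,y,z) \to \varphi(t,y,z)$ for a.e.\ $(t,y,z)$ (and in $L^1_{\mathrm{loc}}$). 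By the first part of the corollary recalled from \cite[Theorem 3.7]{Bauer_MultiDim}, for each $n$ the mean-field SDE \eqref{lebesgueMFSDE} with $\varphi$ replaced by $\varphi_n$ admits a strong solution $X^{x,n}$; write $\mu^n_t := \Pbb_{X^{x,n}_t}$ for its marginal laws.

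The core of the argument is then a compactness/tightness step. I would first establish uniform (in $n$) moment bounds: since $b$ is of linear growth and $\int_{\Rbb^d}\varphi_n(t,y,z)\mu^n_t(dz)$ is controlled by $C(1+\Vert y\Vert + \int\Vert z\Vert \mu^n_t(dz))$ uniformly in $n$, a Gronwall argument on $\Ebb[\sup_{s\le t}\Vert X^{x,n}_s\Vert^2]$ gives a bound independent of $n$. Combined with the Brownian increments of the additive noise this yields a Kolmogorov-type equicontinuity estimate, hence tightness of the laws of $(X^{x,n})_n$ on $\Ccal([0,T];\Rbb^d)$, and also relative compactness of the curves $(t\mapsto\mu^n_t)_n$ in $\Ccal([0,T];\Pcal_1(\Rbb^d))$. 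Passing to a subsequence, $X^{x,n}$ converges in law to some process, and I would either invoke Skorokhod's representation theorem to get a.s.\ convergence on a new probability space, or work directly with a martingale-problem / weak-solution formulation. The limiting drift is $b\bigl(t,y,\int\varphi(t,y,z)\mu_t(dz)\bigr)$ where $\mu_t$ is the limit of $\mu^n_t$: here one uses that $\varphi_n\to\varphi$ a.e.\ together with the uniform linear growth (giving uniform integrability) to pass to the limit inside the inner integral, that $\mu^n_t\to\mu_t$ weakly with first moments, and the continuity of $b$ in the third variable \eqref{continuityThirdMF} to pass to the limit in the outer composition. A Girsanov change of measure (removing the bounded-on-compacts, linear-growth drift) is the standard device to identify the limit as a genuine weak solution and to handle the fact that the drift is only measurable in $y$; one shows the candidate Doléan-Dade density is a true martingale using the uniform moment bounds. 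The self-consistency condition $\mu_t = \Pbb_{X^x_t}$ follows because it holds along the approximating sequence and both sides converge. This establishes existence of a (weak, hence by the Yamada--Watanabe-type results quoted in \cite{Bauer_MultiDim} strong) solution.

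For the uniqueness statement under the additional hypothesis that $b$ is Lipschitz in the third variable \eqref{lipschitzThirdMF}: suppose $X^x$ and $\widetilde X^x$ are two strong solutions with marginal laws $\mu_t,\widetilde\mu_t$. The difference of the drifts splits as
\begin{align*}
\Bigl\Vert b\Bigl(t,X^x_t,\textstyle\int\varphi(t,X^x_t,z)\mu_t(dz)\Bigr) - b\Bigl(t,\widetilde X^x_t,\textstyle\int\varphi(t,\widetilde X^x_t,z)\widetilde\mu_t(dz)\Bigr)\Bigr\Vert
\end{align*}
into a part where only the space variable of $b$ changes — handled exactly as in the pathwise-uniqueness proof for the general equation in \cite{Bauer_MultiDim}, using that $b$ is bounded/measurable in $y$ and a modulus-of-continuity / local-time argument — and a part where only the third argument changes, which by \eqref{lipschitzThirdMF} is bounded by $C\bigl\Vert\int\varphi(t,\widetilde X^x_t,z)\mu_t(dz) - \int\varphi(t,\widetilde X^x_t,z)\widetilde\mu_t(dz)\bigr\Vert$. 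This last quantity, however, is \emph{not} directly controlled by a Wasserstein distance between $\mu_t$ and $\widetilde\mu_t$ because $\varphi$ is only measurable; the trick is that it equals $\bigl\Vert \Ebb[\varphi(t,\widetilde X^x_t, X^x_t)] - \Ebb[\varphi(t,\widetilde X^x_t,\widetilde X^x_t)]\bigr\Vert$ conditionally, and by linear growth plus the $L^2$-bound on $X^x_t - \widetilde X^x_t$ and the observation that $X^x,\widetilde X^x$ are driven by the \emph{same} Brownian motion, one can approximate $\varphi$ again by Lipschitz $\varphi_n$, absorb the error using uniform integrability, and close a Gronwall estimate for $\Ebb[\Vert X^x_t - \widetilde X^x_t\Vert^2]$ after conditioning and applying Girsanov to remove the irregular $y$-dependence.

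\textbf{Main obstacle.} The delicate point is passing to the limit in, and controlling, the inner integral $\int_{\Rbb^d}\varphi(t,y,z)\mu_t(dz)$ when $\varphi$ is merely measurable: pointwise a.e.\ convergence $\varphi_n\to\varphi$ does not survive integration against a varying measure $\mu^n_t$ without an equi-integrability input, and in the uniqueness argument the absence of any continuity of $\varphi$ means one cannot bound the drift difference by $W_1(\mu_t,\widetilde\mu_t)$. Both difficulties are resolved by exploiting that, after the Girsanov transformation, the relevant laws are mutually absolutely continuous with respect to Wiener measure with $L^p$-densities (using the uniform moment bounds and the linear growth of $b$), so that expressions of the form $\Ebb[\varphi_n(t,B^x_t, B^x_t) - \varphi(t,B^x_t,B^x_t)]$ vanish by dominated convergence; managing the interplay of this change of measure with the mean-field coupling is where the real work lies.
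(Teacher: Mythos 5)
Your existence argument is essentially the paper's: mollify, solve the approximating mean-field SDEs via the results recalled in \Cref{sec:Recall}, extract a limit by tightness/Skorokhod, and identify the limiting drift by pushing everything onto Wiener measure with Girsanov, dominated convergence for the $\varphi$-approximation, and continuity of $b$ in the third variable; the paper then freezes the limit law and invokes Veretennikov's result for SDE \eqref{helpMFSDE} to upgrade to a strong solution, which is also what your ``Yamada--Watanabe applied after freezing the law'' amounts to. Two small repairs: mollifying a linear-growth $\varphi$ in $z$ alone does not give the \emph{global} Lipschitz bound \eqref{lipschitzThirdMF} uniformly in $y$ (the derivative of $\varphi * \rho_n$ grows like $1+\Vert y\Vert+\Vert z\Vert$), so you must truncate as well, i.e.\ take $\varphi_n \in \Ccal_0^\infty$ as the paper does; and second-moment increment bounds alone do not give $\Ccal([0,T])$-tightness via Kolmogorov, which is why the paper uses the weaker Skorokhod criterion (\Cref{thm:SkorohodRepresentationTheorem}) with convergence in probability at fixed times.

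The genuine gap is in the uniqueness half. You propose a pathwise Gr\"onwall estimate on $\Ebb[\Vert X_t^x-\widetilde X_t^x\Vert^2]$ for two solutions driven by the same Brownian motion, handling the term where only the space variable of $b$ changes ``as in the pathwise-uniqueness proof of \cite{Bauer_MultiDim} by a modulus-of-continuity / local-time argument''. Under the hypotheses of the theorem there is no continuity of $b$ in $y$ at all, so no estimate of the form $\Ebb\Vert b(t,X_t,r)-b(t,\widetilde X_t,r)\Vert^2 \lesssim \Ebb\Vert X_t-\widetilde X_t\Vert^2$ is available, and Girsanov cannot ``remove the irregular $y$-dependence'' in a pathwise comparison: a change of measure turns \emph{one} of the two processes into a Brownian motion, but the difference process of two solutions under a common measure does not simplify. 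Likewise, the law term: you correctly reduce it via \eqref{lipschitzThirdMF} to $\Vert\int\varphi(t,\widetilde X_t,z)(\Pbb_{X_t}-\Pbb_{\widetilde X_t})(dz)\Vert$, but controlling this by $\Ebb\Vert X_t-\widetilde X_t\Vert$ through Lipschitz approximations $\varphi_n$ of $\varphi$ fails, because the Lipschitz constants blow up and the approximation error is not uniform; ``absorbing the error by uniform integrability'' does not close the loop.

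The paper's uniqueness argument is structurally different: it never compares the two solutions pathwise. Given two \emph{weak} solutions, it uses the Li--Min change of measure and estimates the Kantorovich distance $\Kcal(\widetilde\Qbb_{X_t},\Pbb_{X_t})$; after Girsanov all expectations are taken under the Wiener measure, the Lipschitz assumption \eqref{lipschitzThirdMF} on $b$ converts every drift discrepancy into the single quantity $\Vert\int_{\Rbb^d}\varphi(s,B_s,z)(\Pbb_{X_s}-\hat\Pbb_{Y_s})(dz)\Vert$, and this quantity is shown to satisfy a closed Gr\"onwall inequality (via the difference of the two Dol\'eans-Dade exponentials), hence vanishes. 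This requires no regularity of $\varphi$ in $z$ and no control of $\Vert X_t-\widetilde X_t\Vert$; it yields uniqueness in law, from which strong uniqueness follows since the frozen-law equation \eqref{helpMFSDE} is pathwise unique by \cite{veretennikov1981strong}. To fix your proof you should replace the pathwise Gr\"onwall by this law-level argument (or something equivalent operating on the marginal laws under Wiener measure).
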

\begin{proof}
	This proof is organized as follows. First we introduce a sequence $\lbrace Y^n \rbrace_{n\geq 1}$ of solutions to mean-field SDE \eqref{eq:RegMainMcKeanVlasov} with approximating coefficients and show that we can find a probability space $(\Omega, \Fcal, \Pbb)$ such that the equivalent sequence $\lbrace X^k \rbrace_{k\geq 1}$ of $\lbrace Y^n \rbrace_{n\geq 1}$ on this probability space converges in $L^2(\Omega)$ to some stochastic process $X$. We then prove that $X^k$ further converges weakly in $L^2(\Omega)$ to a solution of \eqref{lebesgueMFSDE} and thus by uniqueness of the limit $X$ is a weak solution of mean-field SDE \eqref{lebesgueMFSDE}. Afterwards we conclude the existence of a strong solution and prove uniqueness of the solution. \par 
	By standard arguments using mollifiers, we can define sequences $\lbrace b_n \rbrace_{n\geq 1}$ and $\lbrace \varphi_n\rbrace_{n\geq 1}$ in $\Ccal_0^\infty([0,T] \times \Rbb^d \times \Rbb^d)$ such that $b_n$ converges to $b$ and $\varphi_n$ converges to $\varphi$, respectively, pointwise in $(t,y,z) \in [0,T]\times \Rbb^d \times \Rbb^d$ a.e. with respect to the Lebesgue measure. We denote the original functions $b$ and $\varphi$ by $b_0$ and $\varphi_0$, respectively. Due to continuity assumption \eqref{continuityThirdMF} on the coefficient $b$, we can further assume that the family of coefficients $\lbrace b_n \rbrace_{n\geq 0}$ is pointwisely equicontinuous in the third variable, i.e. for every $\varepsilon>0$ and $z_1 \in \Rbb^d$ exists a $\delta >0$ such that for all $n\geq 0$, $t\in [0,T]$, and $y \in \Rbb^d$ we get
	\begin{align}\label{continuityThirdEquiApprox}
		\left( \forall z_2 \in \Rbb^d: \Vert z_1-z_2 \Vert< \delta \right) \Rightarrow \Vert b_n(t,y,z_1) - b_n(t,y,z_2) \Vert < \varepsilon.
	\end{align}
Then, by \Cref{cor:strongSolutionRecall}, for $n\geq 1$ mean-field SDEs
	\begin{align}\label{eq:RegApproximatingMFSDE}
		\begin{split}
		dY_t^n &= b_n\left(t,Y_t^n,\int_{\Rbb^d} \varphi_n\left(t,Y_t^n,z\right) \Pbb_{Y_t^n}(dz) \right) dt + dW_t, ~ t\in[0,T],\\
		Y_0^n &= x \in \Rbb^d,
		\end{split}
	\end{align}
	where $W= (W_t)_{t\in[0,T]}$ is Brownian motion, have unique strong solutions $\lbrace Y^n \rbrace_{n\geq 1}$ on some complete probability space $(\tilde{\Omega}, \tilde{\Fcal}, \tilde{\Pbb})$. Moreover, due to \Cref{lem:RegBoundsSolution} there exists some constant $C>0$ such that
	\begin{enumerate}[(i)]
		\item $\sup_{n\geq 1} \sup_{0\leq t \leq T} \EPpO{Y_t^n}{\tilde{\Pbb}}{2} \leq C(1+\Vert x\Vert^2)$,
		\item $\sup_{n\geq 1} \sup_{0\leq s \leq t \leq T; t-s \leq h} \EPpO{Y_t^n-Y_s^n}{\tilde{\Pbb}}{2} \leq Ch$.
	\end{enumerate}
	Next, we show that the properties (i) and (ii) imply the assumptions of \Cref{thm:SkorohodRepresentationTheorem} and thus there exists a subsequence $\lbrace n_k \rbrace_{k\geq 1} \subset \Nbb$ and a sequence of stochastic processes $\lbrace (X_t^k)_{t\in[0,T]} \rbrace_{k\geq 1}$ defined on a probability space $(\Omega, \Fcal, \Pbb)$ such that the finite dimensional distributions of the processes $Y^{n_k}$ and $X^k$ coincide for every $k\geq 1$, c.f. \Cref{rem:finiteDistribution}, and $X_t^k$ converges in probability to $X_t$ as $k$ goes to infinity. Note first that the stochastic processes $\lbrace Y^n \rbrace_{n\geq 1}$ are almost surely continuous as a solution of mean-field SDE \eqref{eq:RegApproximatingMFSDE}. Furthermore, we get by Chebyshev's inequality that due to (i)
	\begin{align*}
		\tilde{\Pbb}( \Vert Y_t^n \Vert > K) &\leq \frac{1}{K^2} \EPpO{Y_t^n}{\tilde{\Pbb}}{2} \leq \frac{1}{K^2} C(1+\Vert x \Vert^2 ), \quad K>0,
	\end{align*}
	and thus
	\begin{align*}
		\lim_{K \to \infty} \lim_{n \to \infty} \sup_{t\in [0,T]} \tilde{\Pbb}( \Vert Y_t^n \Vert > K) \leq \lim_{K \to \infty} \lim_{n \to \infty} \sup_{t\in [0,T]} \frac{1}{K^2} C(1+\Vert x \Vert^2 ) = 0.
	\end{align*} 
	Equivalently, we get due to property (ii) that for every $\varepsilon > 0$
	\begin{align*}
		\tilde{\Pbb}(\Vert Y_t^n - Y_s^n \Vert > \varepsilon) \leq \frac{1}{\varepsilon^2} \EPpO{Y_t^n - Y_s^n}{\tilde{\Pbb}}{2} \leq \frac{C h}{\varepsilon^2},
	\end{align*}
	and thus 
	\begin{align*}
		\lim_{h\to 0} \lim_{n\to \infty} \sup_{\vert t-s \vert \leq h} \tilde{\Pbb}(\Vert Y_t^n - Y_s^n \Vert > \varepsilon) \leq \lim_{h\to 0} \lim_{n\to \infty} \sup_{\vert t-s \vert \leq h} \frac{C h}{\varepsilon^2} = 0.
	\end{align*}
	 Consequently, the assumptions of \Cref{thm:SkorohodRepresentationTheorem} are fulfilled. For the sake of readability, we assume in the following without loss of generality that $n_k = k$. Further note that due to the uniform integrability of $\lbrace \Vert X_t^k \Vert^2 \rbrace$ by property (i), we get that for every $t\in [0,T]$ the sequence $\lbrace X_t^k \rbrace_{k\geq 1}$ converges to $X_t$ in $L^2(\Omega)$. Due to property (ii) we further get in connection with Kolmogorov's continuity theorem that $(X_t)_{t\in[0,T]}$ can be assumed to have almost surely continuous path. Using approximation by Riemann sums, we further have that $$\int_0^t b_k\left(s,X_s^k, \int_{\Rbb^d} \varphi_k \left(s,X_s^k,z\right) \Pbb_{X_s^k}(dz) \right) ds$$ and $$ \int_0^t b_k \left(s,Y_s^k, \int_{\Rbb^d} \varphi_k \left(s,Y_s^k,z\right) \Pbb_{Y_s^k}^k(dz) \right) ds$$ have the same distribution for every $k\geq 1$. Again by virtue of \Cref{thm:SkorohodRepresentationTheorem} we get that 
	 \begin{align*}
	 	B_t^k := X_t^k - \int_0^t b_k \left(s,X_s^k, \int_{\Rbb^d} \varphi_k \left(s,X_s^k,z\right) \Pbb_{X_s^k}(dz) \right) ds
	 \end{align*}
	 is $d$-dimensional Brownian motion on the probability space $(\Omega, \Fcal, \Pbb)$ and thus $X^k$ solves \eqref{eq:RegApproximatingMFSDE} on the stochastic basis $(\Omega, \Fcal, \Fbb, \Pbb, B^k)$.\par 
	Let us define the stochastic differential equation
	\begin{align}\label{helpMFSDE}
		d\overline{X}_t = b\left(t,\overline{X}_t, \int_{\Rbb^d} \varphi\left(t,\overline{X}_t,z\right) \Pbb_{X_t}(dz) \right) dt + dB_t,~ t\in[0,T], ~\overline{X}_0 = x \in \Rbb^d.
	\end{align}
	Due to the result of Veretennikov given in \cite{veretennikov1981strong}, SDE \eqref{helpMFSDE} has a unique strong solution on the probability space $(\Omega, \Fcal, \Pbb)$. Therefore it is left to show that for every $t \in [0,T]$ the sequence $\lbrace X_t^k \rbrace_{k\geq 1}$ converges weakly in $L^2(\Omega)$ to $\overline{X}_t$. Indeed, if this holds true, we get by the uniqueness of the limit that $\Pbb_{X_t} = \Pbb_{\overline{X}_t}$ for all $t\in[0,T]$ and consequently mean-field SDE \eqref{lebesgueMFSDE} and \eqref{helpMFSDE} coincide. Hence, we have found a weak solution of \eqref{lebesgueMFSDE}. In order to prove weak convergence in $L^2(\Omega)$ we use the Wiener transform and show for every $f\in L^2([0,T])$,
	\begin{align*}
		\left\Vert\Wcal\left(X_t^k \right)(f) - \Wcal\left(\overline{X}_t \right)(f)\right\Vert \xrightarrow[n\to\infty]{} 0.
	\end{align*}
	Using inequality
	\begin{align}\label{eq:RegExponentialInequality}
		|e^x - e^y| \leq |x-y|(e^x+e^y), \quad \forall x,y\in\Rbb,
	\end{align}	
	Burkholder-Davis-Gundy's inequality and Minkowski's integral inequality, we get for $p:= \frac{1+ \varepsilon}{\varepsilon}$, $\varepsilon>0$ sufficiently small with respect to \Cref{lem:RegBoundsSolution}, that
	\begin{align*}
		&\left\Vert \Wcal\left(X_t^k\right)(f) - \Wcal\left(\overline{X}_t\right)(f)\right\Vert \\
		&\quad \leq \Ebb \left[ \left\Vert B_t^x\right\Vert \left| \Ecal\left( \int_0^T b_k\left(t,B_t^x,\int_{\Rbb^d} \varphi_k\left(t,B_t^x,z\right) \Pbb_{X_t^k}(dz) \right) + f(t) dB_t \right) \right.\right.\\
		&\qquad \left.\left. - \Ecal\left( \int_0^T b\left(t,B_t^x,\int_{\Rbb^d} \varphi\left(t, B_t^x,z\right) \Pbb_{X_t}(dz)\right) +f(t) dB_t \right) \right| \right]\\
		&\quad \lesssim \left( \int_0^T \Ebb \left[ \left\Vert b_k\left(t,B_t^x,\int_{\Rbb^d} \varphi_k\left(t,B_t^x,z\right) \Pbb_{X_t^k}(dz) \right) \right.\right.\right.\\
		&\qquad - \left.\left. \left. b\left(t,B_t^x,\int_{\Rbb^d} \varphi\left(t, B_t^x,z\right) \Pbb_{X_t}(dz) \right)\right\Vert^p \right]^{\frac{2}{p}} dt \right)^{\frac{1}{2}} + C_n \\
		&\quad =: A_n + C_n,
	\end{align*}
	where
	\begin{align*}
		C_n := \int_0^T &\Ebb\left[ \left| \left\Vert b_k\left(t,B_t^x,\int_{\Rbb^d} \varphi_k\left(t,B_t^x,z\right) \Pbb_{X_t^k}(dz) \right)+f(t)\right\Vert^2 \right. \right.\\
		&\quad \left. \left.- \left\Vert b\left(t,B_t^x, \int_{\Rbb^d} \varphi\left(t,B_t^x,z\right) \Pbb_{X_t}(dz) \right)+f(t)\right\Vert^2 \right|^p \right]^{\frac{1}{p}} dt.
	\end{align*}
	We show using dominated convergence that $A_n$ converges to $0$ as $k$ tends to infinity. Since the family of coefficients $\lbrace b_k \rbrace_{k\geq 0}$ is pointwisely equicontinuous in the third variable \eqref{continuityThirdEquiApprox}, it suffices to show that for all $t\in [0,T]$ and $y \in \Rbb^d$
	\begin{align*}
		\left\Vert \int_{\Rbb^d} \varphi_k\left(t,y,z\right) \Pbb_{X_t^k}(dz) - \int_{\Rbb^d} \varphi\left(t,y,z\right) \Pbb_{X_t}(dz) \right\Vert  &\xrightarrow[k\to\infty]{}0, \text{ and} \\
		\Ep{\overline{b}_k\left(t,B_t^x,\Pbb_{X_t} \right) - \overline{b}\left(t,B_t^x,\Pbb_{X_t} \right)}{p} &\xrightarrow[k\to\infty]{}0.
	\end{align*}
	The second convergence is an immediate consequence of the definition of $b_k$, \Cref{lem:RegBoundsSolution}, and dominated convergence. Thus, it remains to show the first convergence. Let $\delta>0$. Since $\varphi_k$ is of at most linear growth \eqref{linearGrowthMF} for all $k\geq 0$, we get by (i) that
	\begin{align*}
		&\sup_{k\geq 0} \Ebb \left[ \left\Vert \varphi_k\left(t,y, X_t^k \right) \right\Vert \right] \leq C \left(1+ \Vert y \Vert + \sup_{k\geq 0} \Eabs{X_t^k} \right) \leq C_1,
	\end{align*}
	where $C_1>0$ is some constant independent of $k\geq 0$. Hence, due to dominated convergence we can find $N_1\in \Nbb$ sufficiently large such that 
	\begin{align*}
		\sup_{k\geq N_1} \Ebb\left[\left\Vert \varphi_k \left(t,y, X_t \right) - \varphi\left(t,y, X_t\right) \right\Vert \right]<\frac{\delta}{3}.
	\end{align*}
	Note further that for $\varepsilon>0$ sufficiently small with respect to \Cref{lem:RegBoundsSolution},
	\begin{align*}
		\sup_{k\geq 0} \Ebb\left[\Ecal\left(\int_0^T b_k\left(t,B_t^x, \int_{\Rbb^d} \varphi_k(t, B_t^x, z) \Pbb_{X_t^k}(dz)\right) dB_t \right)^{1+\varepsilon}\right]^{\frac{1}{1+\varepsilon}} \leq C_2 < \infty,
	\end{align*}
	where $C_2>0$ is some constant. Thus we can find by Girsanov's theorem and again by dominated convergence an integer $N_2\in \Nbb$ such that 
	\begin{align*}
		&\sup_{m,k\geq N_2} \Ebb\left[ \left\Vert \varphi_k(t,y,X_t^k) - \varphi_m(t,y,X_t^k) \right\Vert \right] \\
		&\quad \leq \sup_{m,k\geq N_2} C_2 \Ebb\left[\left\Vert \varphi_k(t,y,B_t^x)-\varphi_m(t,y,B_t^x) \right\Vert^p \right]^{\frac{1}{p}}<\frac{\delta}{3},
	\end{align*}
	where $p:= \frac{1+\varepsilon}{\varepsilon}$.
	Therefore, using Minkowski's and Hölder's inequality we get for $N:= \max\lbrace N_1, N_2 \rbrace$ and $k\geq N$
	\begin{align*}
		&\left\Vert \Ebb\left[ \varphi_k(t,y,X_t^k) \right] - \Ebb\left[ \varphi(t,y,X_t) \right] \right\Vert \\
		&\quad \leq \Ebb\left[ \left\Vert \varphi_k(t,B_t^x, X_t^k) - \varphi_N(t,B_t^x,X_t^k) \right\Vert \right] + D_k \\
		&\qquad + \Ebb\left[ \left\Vert\varphi_N(t,y,X_t) - \varphi(t,y, X_t) \right\Vert \right] \\
		&\quad \leq D_k + \frac{2\delta}{3},
	\end{align*}
	where
	\begin{align*}
		D_k:= \left\Vert \Ebb\left[\varphi_N(t,y,X_t^k) \right] - \Ebb\left[ \varphi_N(t,y,X_t) \right] \right\Vert.
	\end{align*}
	Since $\varphi_N$ is smooth and has compact support by the definition of mollification, $\varphi_N$ is also bounded. Hence, using the fact that $X_t^k$ converges in distribution to $X_t$ for every $t\in[0,T]$, we can find $k\geq N$ sufficiently large such that $D_k < \frac{\delta}{3}$. Analogously one can show that $C_k$ converges to $0$ as $k$ tends to infinity and therefore, $X$ is a weak solution of the mean-field SDE \eqref{lebesgueMFSDE}.  Due to the proof of \cite[Theorem 3.7]{Bauer_MultiDim} we get as a direct consequence the existence of a strong solution of mean-field equation \eqref{lebesgueMFSDE} for the more general class of functionals $\varphi$.\par 

	Let $(\Omega, \Fcal, \Fbb, \Pbb, X, B)$ and $(\hat{\Omega}, \hat{\Fcal}, \hat{\Fbb}, \hat{\Pbb}, Y, W)$ be two weak solutions of mean-field SDE \eqref{lebesgueMFSDE} and assume that the drift coefficient $b$ is Lipschitz continuous in the third variable \eqref{lipschitzThirdMF}. In the following we show that $X$ and $Y$ have the same law, i.e. $\Pbb_X = \hat{\Pbb}_Y$. For the sake of readability we just consider the case $x=0$. The general case follows analogously. From \cite{Bauer_MultiDim} we know that there exist measures $\Qbb$ and $\hat{\Qbb}$ such that under these measures the processes $X$ and $Y$ are Brownian motions, respectively. Similar to the proofs of \cite[Theorem 3.7]{Bauer_MultiDim} and \cite[Theorem 2.7]{Bauer_StrongSolutionsOfMFSDEs} we use the idea of Li and Min in the proof of Theorem 4.2 in \cite{LiMin_WeakSolutions} and define the equivalent probability measure $\widetilde{\Qbb}$ by
 \begin{align*}
 	\frac{\dd \widetilde\Qbb}{\dd \Pbb} := \Ecal \left( - \int_0^T \left( \bbar\left(t,X_t,\Pbb_{X_t} \right) - \bbar\left(t,X_t, \hat{\Pbb}_{Y_t} \right) \right) dB_t^X \right).
 \end{align*}
 Due to \cite{Bauer_MultiDim} and \cite{Bauer_StrongSolutionsOfMFSDEs}, $$\hat{\Pbb}_{(Y,W)} = \widetilde{\Qbb}_{(X,B)}.$$ Thus, it is left to show that $\sup_{t\in[0,T]} \Kcal \left(\widetilde\Qbb_{X_t}, \Pbb_{X_t} \right) = 0$, from which we conclude that $\sup_{t\in[0,T]} \Kcal \left(\hat{\Pbb}_{Y_t}, \Pbb_{X_t} \right) = 0$ and hence $\frac{d \widetilde\Qbb}{d \Pbb}  = 1$. Consequently, $\hat{\Pbb}_{(Y,W)} = \Pbb_{(X,B)}.$ Here, $\Kcal$ denotes the Kantorovich metric defined by
 \begin{align*}
 	\Kcal( \mu, \nu) = \sup_{h \in \Lip_1(\Rbb^d;\Rbb)} \left\vert \int_{\Rbb^d} h(x) (\mu - \nu)(dx) \right\vert, ~\mu, \nu \in \Pcal_1(\Rbb^d).
 \end{align*} 
 Using Hölder's inequality for $p:= \frac{1+\varepsilon}{\varepsilon}$, where $\varepsilon>0$ is sufficiently small with regard to \Cref{lem:RegBoundsSolution}, inequality \eqref{eq:RegExponentialInequality}, Burkholder-Davis-Gundy's inequality, and the Lipschitz continuity of $b$ we get
 \begin{align*}
 	&\Kcal \left(\widetilde\Qbb_{X_t}, \Pbb_{X_t}\right) = \sup_{h \in \Lip_1(\Rbb^d;\Rbb)} \left\vert \Ebb_{\widetilde\Qbb} \left[h(X_t)-h(0) \right] - \Ebb\left[h(X_t)-h(0) \right] \right\vert \\
 	&\quad \leq \Ebb \left[ \left\Vert X_t \right\Vert \left\vert \Ecal \left( - \int_0^t \left( \bbar\left(s,X_s,\Pbb_{X_s}\right) - \bbar\left(s,X_s, \hat{\Pbb}_{Y_s}\right) \right) dB_s \right) - 1 \right\vert\right] \\
 	&\quad \lesssim \Ebb \left[ \left\vert \Ecal \left( - \int_0^t \left( \bbar\left(s,X_s,\Pbb_{X_s}\right) - \bbar\left(s,X_s, \hat{\Pbb}_{Y_s}\right) \right) dB_s \right) - 1 \right\vert^{\frac{2(1+\varepsilon)}{2+\varepsilon}} \right]^{\frac{2+\varepsilon}{2(2+\varepsilon)}} \\
 	&\quad \lesssim \Ebb \left[ \left\vert \int_0^t \left( \bbar\left(s,X_s,\Pbb_{X_s} \right) - \bbar\left(s,X_s, \hat{\Pbb}_{Y_s} \right) \right) dB_s \right.\right. \\
 	&\qquad \left. \left. + \frac{1}{2} \int_0^t \left\Vert \bbar\left(s,X_s,\Pbb_{X_s}\right) - \bbar\left(s,X_s, \hat{\Pbb}_{Y_s}\right) \right\Vert^2 ds \right\vert^{2p} \right]^{\frac{1}{2p}} \\
 	&\quad \lesssim \Ebb \left[ \left\vert \int_0^t \left\Vert \bbar\left(s,X_s,\Pbb_{X_s}\right) - \bbar\left(s,X_s, \hat{\Pbb}_{Y_s} \right) \right\Vert^2 ds \right\vert^{p} \right]^{\frac{1}{2p}} \\
 	&\qquad + \Ebb \left[ \left\vert \int_0^t \left\Vert \bbar\left(s,X_s,\Pbb_{X_s} \right) - \bbar\left(s,X_s, \hat{\Pbb}_{Y_s} \right) \right\Vert^2 ds \right\vert^{2p} \right]^{\frac{1}{2p}} \\
 	&\quad \lesssim \max_{q=1,2} \Ebb \left[ \left( \int_0^t \left\Vert \int_{\Rbb^d} \varphi(s,X_s,z) \left(\Pbb_{X_s} - \hat{\Pbb}_{Y_s}\right)(dz) \right\Vert^2 ds \right)^{qp} \right]^{\frac{1}{2p}} \\
 	&\quad = \max_{q=1,2} \Ebb \left[ \left( \int_0^t \left\Vert \int_{\Rbb^d} \varphi(s,B_s,z) \left(\Pbb_{X_s} - \hat{\Pbb}_{Y_s}\right)(dz) \right\Vert^2 ds \right)^{qp} \right. \\
 	&\qquad \left. \times \Ecal\left(-\int_0^s \bbar(u,B_u,\Pbb_{X_u}) dB_u \right) \right]^{\frac{1}{2p}} \\
 	&\quad \lesssim \max_{q=1,2} \Ebb \left[ \left( \int_0^t \left\Vert \int_{\Rbb^d} \varphi(s,B_s,z) \left(\Pbb_{X_s} - \hat{\Pbb}_{Y_s}\right)(dz) \right\Vert^2 ds \right)^{qp^2} \right]^{\frac{1}{2p^2}}.
 \end{align*}
 Equivalent to the steps before we get using $\sup_{t\in[0,T]}\EW{\Vert \bbar(t,B_t,\mu_t)\Vert^2} <\infty$ for all $\mu \in \Ccal([0,T];\Pcal_1(\Rbb^d))$ that
 \begin{align*}
 	 &\left\Vert \int_{\Rbb^d} \varphi(s,B_s,z) \left(\Pbb_{X_s} - \hat{\Pbb}_{Y_s}\right)(dz) \right\Vert^2\\
 	 &\quad = \left\Vert \Ebb \left[ \varphi(s,y,X_s) \right] - \Ebb_{\hat\Pbb}\left[\varphi(s,y,Y_s) \right] \right\Vert_{y= B_s }^2 \\
 	 &\quad = \Ebb\left[ \left\Vert \varphi(s,y,B_s) \right\Vert \right. \\
 	 &\qquad \times \left.\left\vert \Ecal\left( - \int_0^s \bbar(u,B_u,\Pbb_{X_u}) dB_u \right) - \Ecal\left( - \int_0^s \bbar(u,B_u,\hat\Pbb_{Y_u}) dB_u \right) \right\vert \right]_{y= B_s}^2 \\
 	 &\quad \lesssim \left( 1+ B_s \right)^2 \Ebb\left[\left\vert \int_0^s \left( \bbar(u,B_u,\Pbb_{X_u}) - \bbar(u,B_u,\hat\Pbb_{Y_u}) \right) dB_u \right.\right.\\
 	 &\qquad \left.\left. +\frac{1}{2} \int_0^s \left( \left\Vert\bbar(u,B_u,\Pbb_{X_u})\right\Vert^2 - \left\Vert\bbar(u,B_u,\hat\Pbb_{Y_u})\right\Vert^2\right) du \right\vert^{2p} \right]^{\frac{1}{p}} \\
 	 &\quad \lesssim \left( 1+ B_s \right)^2 \Ebb\left[ \left( \int_0^s \left\Vert \bbar(u,B_u,\Pbb_{X_u}) - \bbar(u,B_u,\hat\Pbb_{Y_u}) \right\Vert^2 du\right)^p \right]^{\frac{1}{p}}  \\
 	 &\quad +  \left( 1+ B_s \right)^2 \Ebb\left[\left( \int_0^s \left\vert \left\Vert\bbar(u,B_u,\Pbb_{X_u})\right\Vert^2 - \left\Vert\bbar(u,B_u,\hat\Pbb_{Y_u})\right\Vert^2 \right\vert du \right)^{2p} \right]^{\frac{1}{p}} \\
 	  &\quad \lesssim \left( 1+ B_s \right)^2 \Ebb\left[ \left( \int_0^s \left\Vert \bbar(u,B_u,\Pbb_{X_u}) - \bbar(u,B_u,\hat\Pbb_{Y_u}) \right\Vert^2 du\right)^p \right]^{\frac{1}{p}}  \\
 	  &\lesssim \left( 1+ B_s \right)^2 \Ebb \left[ \left( \int_0^s \left\Vert \int_{\Rbb^d} \varphi(s,B_s,z) \left(\Pbb_{X_s} - \hat{\Pbb}_{Y_s}\right)(dz) \right\Vert^{2} du \right)^p \right]^{\frac{1}{p}}.
 \end{align*}
 Applying the $L^{p^2}(\Omega)$ norm on both sides yields
 \begin{align*}
 	&\Ebb \left[ \left\Vert \int_\Rbb \varphi(s,B_s,z) \left(\Pbb_{X_s} - \hat{\Pbb}_{Y_s}\right)(dz) \right\Vert^{2p^2} \right]^{\frac{1}{p^2}}\\
 	&\lesssim \Ebb \left[ \left( 1+ B_s \right)^{2p^2} \right] \Ebb \left[ \left( \int_0^s \left\Vert \int_{\Rbb^d} \varphi(s,B_s,z) \left(\Pbb_{X_s} - \hat{\Pbb}_{Y_s}\right)(dz) \right\Vert^{2} du \right)^{p^2} \right]^{\frac{1}{p^2}} \\
 	& \lesssim \int_0^s \Ebb \left[ \left\Vert \int_{\Rbb^d} \varphi(s,B_s,z) \left(\Pbb_{X_s} - \hat{\Pbb}_{Y_s}\right)(dz) \right\Vert^{2p^2} \right]^{\frac{1}{p^2}} du
 \end{align*}
 Using a Grönwall argument yields that
 \begin{align*}
 	\Ebb \left[ \left\Vert \int_{\Rbb^d} \varphi(s,B_s,z) \left(\Pbb_{X_s} - \hat{\Pbb}_{Y_s}\right)(dz) \right\Vert^{2p^2} \right]^{\frac{1}{p^2}} = 0.
 \end{align*}
 In particular, 
 \begin{align*}
 	\left\Vert \int_{\Rbb^d} \varphi(s,B_s,z) \left(\Pbb_{X_s} - \hat{\Pbb}_{Y_s}\right)(dz) \right\Vert = 0, \quad \Pbb\text{-a.s.}
 \end{align*}
  and consequently, $\Kcal \left(\widetilde\Qbb_{X_t}, \Pbb_{X_t}\right) =0$.
\end{proof}

 Due to \cite[Theorem 4.1]{Bauer_MultiDim} we immediately get Malliavin differentiability of the strong solution of mean-field equation \eqref{lebesgueMFSDE} for a more general class of functionals $\varphi$.

\begin{theorem}\label{thm:Malliavin}
	Let $b:[0,T] \times \Rbb^d \times \Rbb^d \to \Rbb^d$ be bounded and continuous in the third variable \eqref{continuityThirdMF}. Further, let $\varphi:[0,T] \times \Rbb^d \times \Rbb^d \to \Rbb^d$ be of at most linear growth \eqref{linearGrowthMF}. Then, the strong solution of mean-field SDE \eqref{lebesgueMFSDE} is Malliavin differentiable.
\end{theorem}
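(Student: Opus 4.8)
The plan is to reduce the problem to an SDE with bounded measurable drift and then run the standard Malliavin‑calculus approximation argument; the point is that, once existence and uniqueness have been secured by \Cref{extensionSolution}, the Malliavin‑differentiability part of the proof of \cite[Theorem~4.1]{Bauer_MultiDim} uses the function $\varphi$ only through its linear growth and not through any modulus of continuity. Concretely: by \Cref{extensionSolution} there is a unique strong solution $X=(X_t)_{t\in[0,T]}$ of \eqref{lebesgueMFSDE} on $(\Omega,\Fcal,\Fbb,\Pbb,B)$; set $\nu_t:=\Pbb_{X_t}$, which by \Cref{lem:RegBoundsSolution} is a flow of probability measures with $\sup_{t\in[0,T]}\int_{\Rbb^d}\Vert z\Vert\,\nu_t(dz)<\infty$ and is weakly continuous in $t$. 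Then $X$ is a strong solution of the classical SDE $dX_t=g(t,X_t)\,dt+dB_t$, $X_0=x$, with $g(t,y):=b\bigl(t,y,\int_{\Rbb^d}\varphi(t,y,z)\,\nu_t(dz)\bigr)$, and $g$ is a bounded, Borel‑measurable function on $[0,T]\times\Rbb^d$: bounded because $b$ is, and measurable because $\varphi$ is jointly measurable of linear growth, $b$ is continuous in the third variable, and $t\mapsto\nu_t$ is weakly continuous with uniformly bounded first moment. By Veretennikov's theorem \cite{veretennikov1981strong}, $X$ is in fact the pathwise‑unique strong solution of this decoupled SDE. It therefore suffices to prove Malliavin differentiability of the strong solution of an SDE with bounded measurable drift, which is exactly the mechanism developed in \cite{Bauer_MultiDim}; I sketch it for completeness.

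I would mollify the drift to obtain $g_m\in\Ccal^\infty_b([0,T]\times\Rbb^d)$ with $\sup_{m\ge1}\Vert g_m\Vert_\infty\le\Vert b\Vert_\infty$ and $g_m\to g$ a.e., let $X^m$ be the unique strong solution of $dX^m_t=g_m(t,X^m_t)\,dt+dB_t$ on the same stochastic basis, and recall that, $g_m$ being smooth with bounded derivatives, $X^m_t\in\Dbb^{1,2}$ and the Malliavin derivative $D_sX^m_t$ solves the linear equation $D_sX^m_t=I_d+\int_s^t\partial_yg_m(u,X^m_u)\,D_sX^m_u\,du$ for $0\le s\le t\le T$. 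The decisive step is the $m$‑uniform bound $\sup_{m\ge1}\sup_{0\le s\le t\le T}\Ebb[\Vert D_sX^m_t\Vert^2]<\infty$: applying Girsanov's theorem with density $\tfrac{\dd\Qbb^m}{\dd\Pbb}=\Ecal\bigl(-\int_0^Tg_m(u,B^x_u)\,dB_u\bigr)$, under $\Qbb^m$ the process $X^m$ equals $B^x$, the densities and their inverses have $L^q(\Pbb)$‑norms bounded uniformly in $m$ (because $\Vert g_m\Vert_\infty\le\Vert b\Vert_\infty$), and, combining Hölder's inequality with the Itô--Tanaka (local‑time) representation of $\int_s^t\partial_yg_m(u,B^x_u)\,du$ in terms of $\int\int g_m(u,y)\,L^{B^x}(du,dy)$ — in dimension one this is the representation underlying \eqref{eq:derivativeMalliavin}, in general the analogous estimate from \cite{Bauer_MultiDim}, which still involves $g_m$ only through $\Vert g_m\Vert_\infty$ — one controls $\Ebb[\Vert D_sX^m_t\Vert^2]$ uniformly in $m$, using \Cref{lem:RegBoundsSolution} for the growth contribution of $\varphi$ via the first moment of $\nu$. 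Since moreover $X^m_t\to X_t$ in $L^2(\Omega)$ for every $t$ (standard for bounded drift, again via Girsanov and uniqueness of the limit equation), the classical compactness criterion for $\Dbb^{1,2}$ — if $F_m\to F$ in $L^2(\Omega)$ and $\sup_m\Ebb[\int_0^T\Vert D_sF_m\Vert^2\,ds]<\infty$, then $F\in\Dbb^{1,2}$ and $DF_m\rightharpoonup DF$ weakly in $L^2(\Omega\times[0,T])$ (see, e.g., Nualart's monograph) — applied to $F_m=X^m_t$, $F=X_t$ yields $X_t\in\Dbb^{1,2}$ for all $t\in[0,T]$, i.e.\ the strong solution of \eqref{lebesgueMFSDE} is Malliavin differentiable.

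The main obstacle is the $m$‑uniform Malliavin bound. The naive route — a Grönwall estimate on the linear equation for $D_sX^m_t$ — fails, since $\Vert\partial_yg_m\Vert_\infty$ blows up like $m$ as $m\to\infty$. One must instead carry the estimate entirely through Girsanov and the local‑time representation, in which only $\Vert g_m\Vert_\infty\le\Vert b\Vert_\infty$ and the linear growth of $\varphi$ (through the uniform first‑moment bound on $\nu$ from \Cref{lem:RegBoundsSolution}) enter; this is precisely why the regularity of $\varphi$ is irrelevant here and mere linear growth suffices, in contrast to \Cref{cor:strongSolutionRecall} where Lipschitz continuity of $\varphi$ was needed only to guarantee continuity of the mean‑field drift in the law variable for existence and uniqueness. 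The remaining ingredients — the $L^2(\Omega)$‑convergence of the approximations and the compactness criterion — are routine.
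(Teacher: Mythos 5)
Your proposal is correct and is essentially the paper's own argument: the paper obtains \Cref{thm:Malliavin} by combining the existence result of \Cref{extensionSolution} with \cite[Theorem 4.1]{Bauer_MultiDim}, which amounts exactly to your freezing of the law $\Pbb_{X_t}$ so that \eqref{lebesgueMFSDE} becomes an SDE with bounded measurable drift, whose Malliavin differentiability is then given by the mollification/Girsanov/local-time/compactness machinery you sketch. One small correction: under the assumptions of \Cref{thm:Malliavin} ($b$ only continuous, not Lipschitz, in the third variable) \Cref{extensionSolution} yields existence but not uniqueness of the strong solution, which is harmless here since your argument applies to any strong solution with its own law plugged in.
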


\begin{remark}
	In the one-dimensional case, $d=1$, the class of drift coefficients $b$ and functionals $\varphi$ can be further extended in order to obtain Malliavin differentiability of the strong solution. Consider the decomposition
	\begin{align}\label{eq:RegFormDrift}
		(b\diamond \varphi)\left(t,y,\mu\right) := \bhat\left(t,y,\int_\Rbb \hat{\varphi}(t,y,z) \mu(dz) \right) + \btilde\left(t,y,\int_\Rbb \tilde{\varphi}(t,y,z) \mu(dz) \right),
	\end{align}
	where the drift $\bhat$ is merely measurable and bounded and the functional $\hat{\varphi}$ is merely measurable and of linear growth whereas $\btilde$ and $\tilde{\varphi}$ are of linear growth \eqref{linearGrowthMF} and Lipschitz continuous in the second variable \eqref{lipschitzSecondMF}. If $b$ is continuous in the third variable \eqref{continuityThirdMF}, the strong solution of mean-field SDE \eqref{lebesgueMFSDE} is Malliavin differentiable due to \cite[Theorem 2.12]{Bauer_StrongSolutionsOfMFSDEs}.
\end{remark}

\begin{example}
	Let $b:[0,T] \times \Rbb \times \Rbb \to \Rbb$ be a measurable and bounded function which is continuous in the third variable \eqref{continuityThirdMF}. Furthermore, define the functional $\varphi(t,y,z) = \mathbbm{1}_{\lbrace z\leq u \rbrace}$, where $u \in \Rbb$ is some parameter. Then, the mean-field stochastic differential equation
	\begin{align*}
		dX_t^x = b\left(t,X_t^x, F_{X_t^x}(u)\right) dt + dB_t, ~ t\in[0,T], ~ X_0^x = x \in \Rbb,
	\end{align*}
	where $F_{X_t^x}$ denotes the cumulative distribution function of $X_t^x$, has a Malliavin differentiable strong solution due to \Cref{thm:Malliavin}. If $b$ is Lipschitz continuous in the third variable \eqref{lipschitzThirdMF}, the solution is unique. Note that it is also possible to choose $u=t$ or $u = y$, where the later one yields the mean-field SDE
	\begin{align*}
		dX_t^x = b\left(t,X_t^x, F_{X_t^x}(X_t^x)\right) dt + dB_t,~ t\in[0,T], ~ X_0^x = x \in \Rbb.
	\end{align*}
\end{example}
\bigskip

Using It\^o's formula we are able to extend our results on mean-field SDE \eqref{lebesgueMFSDE} to more general diffusion coefficients. For notational simplicity we just consider the time-homogenous and one-dimensional case. However the time-inhomogeneous and multi-dimensional cases can be shown analogously.

\begin{theorem}
	Consider the time-homogeneous mean-field SDE
	\begin{align}\label{generalizedMFSDE}
		dX_t^x = b\left(X_t^x,\int_{\Rbb} \varphi(X_t^x,z) \Pbb_{X_t^x}(dz)\right) dt + \sigma(X_t^x) dB_t,~ t\in[0,T], ~ X_0^x = x \in \Rbb,
	\end{align}
	with measurable drift $b: \Rbb \times \Rbb \to \Rbb$, functional $\varphi:\Rbb \times \Rbb \to \Rbb$, and volatility $\sigma: \Rbb \to \Rbb$. Moreover, let  $\Lambda: \Rbb \to \Rbb$ be a twice continuously differentiable bijection with derivatives $\Lambda'$ and $\Lambda''$, such that for all $y\in\Rbb$,
	\begin{align*}
		\Lambda'(y) \sigma(y) = 1,
	\end{align*}
	as well as $\Lambda^{-1}$ is Lipschitz continuous. Suppose that $(b^* \diamond \varphi^*): \Rbb \times \Pcal_1(\Rbb) \to \Rbb$, defined by
	\begin{small}
	\begin{align*}
		&(b^*\diamond \varphi^*)(y,\mu) := \\
		&\quad \Lambda'\left(\Lambda^{-1}(y)\right)b\left(\Lambda^{-1}(y),\int_{\Rbb^d} \varphi(\Lambda^{-1}(y),\Lambda^{-1}(z)) \mu(dz)\right) + \frac{1}{2} \Lambda''\left(\Lambda^{-1}(y)\right) \sigma\left(\Lambda^{-1}(y)\right)^2,
	\end{align*}
	\end{small}
	fulfills the assumptions of \Cref{extensionSolution} and \Cref{thm:Malliavin}, respectively. Then, there exists a (Malliavin differentiable) strong solution $(X_t^x)_{t\in[0,T]}$ of \eqref{generalizedMFSDE}. If moreover $b^*$ is Lipschitz continuous in the second variable \eqref{lipschitzThirdMF}, the solution is unique.
\end{theorem}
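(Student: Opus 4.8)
The plan is to reduce \eqref{generalizedMFSDE} to the additive-noise mean-field SDE \eqref{lebesgueMFSDE} by a Lamperti-type change of variables and then to quote \Cref{extensionSolution} and \Cref{thm:Malliavin}. Since $\Lambda'(y)\sigma(y)=1$ for all $y$ and $\sigma$ is real-valued and continuous (as $\Lambda$ is $C^2$), $\Lambda'$ never vanishes, so by the inverse function theorem $\Lambda^{-1}$ is again twice continuously differentiable with $(\Lambda^{-1})'(y)=1/\Lambda'(\Lambda^{-1}(y))$ and $(\Lambda^{-1})''(y)=-\Lambda''(\Lambda^{-1}(y))/\Lambda'(\Lambda^{-1}(y))^3$. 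For any solution $X^x$ of \eqref{generalizedMFSDE}, It\^o's formula applied to $Y_t^x:=\Lambda(X_t^x)$ gives $dY_t^x=\big(\Lambda'(X_t^x)\,b(X_t^x,\int_\Rbb\varphi(X_t^x,z)\,\Pbb_{X_t^x}(dz))+\tfrac12\Lambda''(X_t^x)\sigma(X_t^x)^2\big)\,dt+\Lambda'(X_t^x)\sigma(X_t^x)\,dB_t$, and the diffusion term collapses to $dB_t$. Because $\Lambda$ is a bijection, $\Pbb_{X_t^x}$ is the pushforward of $\Pbb_{Y_t^x}$ under $\Lambda^{-1}$, so $\int_\Rbb\varphi(X_t^x,z)\,\Pbb_{X_t^x}(dz)=\int_\Rbb\varphi(\Lambda^{-1}(Y_t^x),\Lambda^{-1}(w))\,\Pbb_{Y_t^x}(dw)$; substituting $X_t^x=\Lambda^{-1}(Y_t^x)$ everywhere in the drift shows that $Y^x$ is a solution of \eqref{lebesgueMFSDE} with coefficient $(b^*\diamond\varphi^*)$ and starting point $\Lambda(x)$.

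For the existence claim I would run this computation in reverse. By hypothesis $(b^*\diamond\varphi^*)$ satisfies the assumptions of \Cref{extensionSolution} (resp.\ \Cref{thm:Malliavin}), so the transformed equation \eqref{lebesgueMFSDE} has a (Malliavin differentiable) strong solution $Y^x$ on a stochastic basis $(\Omega,\Fcal,\Fbb,\Pbb,B)$, carrying the moment bounds of \Cref{lem:RegBoundsSolution}. Put $X_t^x:=\Lambda^{-1}(Y_t^x)$: since $\Lambda^{-1}$ is Lipschitz, $X^x$ inherits finite moments (hence $\Pbb_{X_t^x}\in\Pcal_1(\Rbb)$ and the drift of \eqref{generalizedMFSDE} is well defined along $X^x$), it is adapted to the $\Pbb$-augmented filtration of $B$, and applying It\^o's formula to $\Lambda^{-1}(Y_t^x)$ — using the expressions above for $(\Lambda^{-1})'$ and $(\Lambda^{-1})''$ — one checks that the $dB_t$-coefficient equals $(\Lambda^{-1})'(Y_t^x)=1/\Lambda'(X_t^x)=\sigma(X_t^x)$ and that the second-order It\^o term $\tfrac12(\Lambda^{-1})''(Y_t^x)\,dt$ precisely cancels the contribution of the $\tfrac12\Lambda''\sigma^2$ summand in $(b^*\diamond\varphi^*)$, leaving exactly $b(X_t^x,\int_\Rbb\varphi(X_t^x,z)\,\Pbb_{X_t^x}(dz))\,dt$. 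Thus $X^x$ is a strong solution of \eqref{generalizedMFSDE}, and when $Y^x$ is Malliavin differentiable so is $X^x$, by the chain rule $D_sX_t^x=(\Lambda^{-1})'(Y_t^x)\,D_sY_t^x$, which is valid because $\Lambda^{-1}$ is Lipschitz.

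For uniqueness, suppose $X^{x,1},X^{x,2}$ are two weak solutions of \eqref{generalizedMFSDE}; the same It\^o computation shows that $Y^{x,i}:=\Lambda(X^{x,i})$ are weak solutions of \eqref{lebesgueMFSDE} with coefficient $(b^*\diamond\varphi^*)$, where one uses that linear growth of $\varphi$ together with Lipschitz continuity of $\Lambda^{-1}$ makes $\varphi^*$ of linear growth, so the mean-field drift is well defined along the $Y^{x,i}$. Under the extra assumption that $b^*$ satisfies the Lipschitz condition \eqref{lipschitzThirdMF}, the uniqueness part of \Cref{extensionSolution} yields $\Pbb_{Y^{x,1}}=\Pbb_{Y^{x,2}}$, and pushing forward by the bijection $\Lambda^{-1}$ gives $\Pbb_{X^{x,1}}=\Pbb_{X^{x,2}}$. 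The main obstacle is the bookkeeping in these two It\^o transformations — in particular verifying that the second-order It\^o correction matches the $\tfrac12\Lambda''\sigma^2$ term built into $(b^*\diamond\varphi^*)$ and that the law-dependence transforms consistently under $\Lambda$ — together with the integrability check that keeps both mean-field drifts meaningful, which is precisely the reason only $\Lambda^{-1}$, and not $\Lambda$, is required to be Lipschitz.
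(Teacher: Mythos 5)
Your proposal is correct and follows essentially the same route as the paper: transfer to the additive-noise mean-field SDE with drift $(b^*\diamond\varphi^*)$ via the Lamperti transform $\Lambda$, invoke Theorems \ref{extensionSolution} and \ref{thm:Malliavin}, map back with $\Lambda^{-1}$ using It\^o's formula, and get Malliavin differentiability from the Lipschitz continuity of $\Lambda^{-1}$; you simply spell out the cancellation of the second-order It\^o term and the uniqueness transfer that the paper leaves implicit. (Only a cosmetic remark: in the uniqueness step you appeal to linear growth of $\varphi$, which is not assumed — the linear growth of $\varphi^*$ is already part of the hypothesis that $(b^*\diamond\varphi^*)$ satisfies the assumptions of Theorem \ref{extensionSolution}.)
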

\begin{proof}
	Since $(b^* \diamond \varphi^*)$ satisfies the conditions of \Cref{extensionSolution} and \Cref{thm:Malliavin}, respectively, mean-field SDE
	\begin{align*}
		dZ_t^x = b^*\left(Z_t^x,\int_\Rbb \varphi^*\left(Z_t^x, z\right) \Pbb_{Z_t^x}(dz)\right)dt + dB_t,~ t\in[0,T],~ Z_0^x = \Lambda(x),
	\end{align*}
	has a (Malliavin differentiable) (unique) strong solution. Thus $X_t^x := \Lambda^{-1}(Z_t^x)$ is a (unique) strong solution of \eqref{generalizedMFSDE} by the application of It\^{o}'s formula, and since $\Lambda^{-1}$ is Lipschitz continuous, $X^x$ is Malliavin differentiable.
\end{proof}

We conclude this section by applying our existence result on solutions of mean-field SDEs to construct solutions of ODEs. More precisely, consider the mean-field SDE
\begin{align}\label{MfOde}
	dX_t^x = b(t,\Ebb[X_t^x])dt + dB_t,~ t\in[0,T],~ X_0^x = x\in \Rbb^d,
\end{align}
i.e.~the drift coefficient only depends on the solution via the expectation $\Ebb[X_t^x]$. By \Cref{extensionSolution}, mean-field SDE \eqref{MfOde} has a strong solution if $b:[0,T] \times \Rbb^d \to \Rbb^d$ is of at most linear growth and continuous in the second variable. Now, by taking expectation on both sides, we loose the randomness and get that $u(t) := \Ebb[X_t^x]$ solves the ODE
	\begin{align}\label{eq:ODE}
		d\,u(t) = b(t,u(t)) dt,~ t\in[0,T],~ u(0) = x\in \Rbb^d.
	\end{align}
We thus have developed a probabilistic approach to the following version of the theorem on existence of solutions of ODEs by Carathéodory, see e.g.~\cite[Theorem 1.1]{Persson_AGeneralizationOfCaratheodorysExistenceTheoremForODEs} or for a direct proof \cite[Chapter II, Theorem 3.2]{Reid_ODE}:

\begin{theorem}
	Let $b:[0,T] \times \Rbb^d \to \Rbb^d$ be of at most linear growth and continuous in the second variable, i.e. $b$ fulfills the corresponding assumptions \eqref{linearGrowthMF} and \eqref{continuityThirdMF}. Furthermore, let $\left(X_t^x\right)_{t\in[0,T]}$ be a strong solution of \eqref{MfOde}. Then $u(t):=\Ebb[X_{t}^x]$ is a solution of ODE \eqref{eq:ODE}.
\end{theorem}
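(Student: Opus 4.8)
The plan is to pass to the integral form of the mean-field SDE \eqref{MfOde} and take expectations, using crucially that its drift $b(t,\Ebb[X_t^x])$ is a deterministic function of time alone.

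First I would observe that, since $(X_t^x)_{t\in[0,T]}$ is assumed to be a strong solution of \eqref{MfOde}, the quantity $u(t):=\Ebb[X_t^x]$ is a well-defined (finite) element of $\Rbb^d$ for every $t\in[0,T]$ --- this is implicit in the solution concept, since otherwise the drift would not be meaningful --- and the deterministic map $s\mapsto b(s,u(s))$ is Lebesgue-a.e.\ defined and integrable on $[0,T]$, being the drift trajectory of a solution. If one prefers a more hands-on argument, note that with $\varphi(t,y,z)=z$ and $\bbar(t,y,\mu)=b\big(t,\int_{\Rbb^d} z\,\mu(dz)\big)$, mean-field SDE \eqref{MfOde} is a special case of \eqref{lebesgueMFSDE} whose coefficients satisfy the hypotheses of \Cref{extensionSolution}, so that \Cref{lem:RegBoundsSolution} gives $\sup_{t\in[0,T]}\Ebb[\|X_t^x\|^2]<\infty$, hence $\sup_{t\in[0,T]}\|u(t)\|\le\sup_{t\in[0,T]}\Ebb[\|X_t^x\|]<\infty$, and then the linear growth bound \eqref{linearGrowthMF} yields $\int_0^T\|b(s,u(s))\|\,ds\le C\int_0^T(1+\|u(s)\|)\,ds<\infty$.

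Next I would write \eqref{MfOde} in integral form, $X_t^x = x + \int_0^t b(s,u(s))\,ds + B_t$ for $t\in[0,T]$, and take expectations on both sides. Because $s\mapsto b(s,u(s))$ is deterministic and integrable on $[0,T]$, we have $\Ebb\big[\int_0^t b(s,u(s))\,ds\big]=\int_0^t b(s,u(s))\,ds$, and because $B$ is a Brownian motion, $\Ebb[B_t]=0$; hence
\begin{align*}
	u(t)=\Ebb[X_t^x] = x + \int_0^t b(s,u(s))\,ds,\qquad t\in[0,T].
\end{align*}
This is exactly the integral form of ODE \eqref{eq:ODE}. To finish, I would note that the right-hand side is a constant plus the indefinite integral of an $L^1([0,T])$-function, so $u$ is absolutely continuous on $[0,T]$ with $u(0)=x$, and by the Lebesgue differentiation theorem $u'(t)=b(t,u(t))$ for almost every $t\in[0,T]$; thus $u$ solves \eqref{eq:ODE} in the Carathéodory sense.

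There is no genuine obstacle here: the only thing requiring any care is the measurability and integrability of the integrand $s\mapsto b(s,u(s))$ and the justification for interchanging $\Ebb$ with the time integral, and both are immediate from the linear growth hypothesis together with the a priori moment bound (or are already built into the definition of a solution).
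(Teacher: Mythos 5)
Your proposal is correct and is essentially the paper's own argument: the paper likewise notes that since the drift $b(t,\Ebb[X_t^x])$ is deterministic, taking expectations in the integral form of \eqref{MfOde} removes the randomness (the Brownian term having zero mean), yielding $u(t)=x+\int_0^t b(s,u(s))\,ds$, i.e.\ ODE \eqref{eq:ODE}. Your added care about measurability/integrability of $s\mapsto b(s,u(s))$ and the Carathéodory (a.e.) interpretation of the derivative is a harmless elaboration of the same route.
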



\section{Regularity in the initial value}\label{sec:Regularity}
The aim of this section is to study the regularity of a strong solution of mean-field SDE \eqref{lebesgueMFSDE} as a function in its initial condition. More precisely, we investigate under which assumptions on $b$ and $\varphi$ the strong solution $X_t^x$ of \eqref{lebesgueMFSDE} is not just Sobolev differentiable but continuously differentiable as a function in $x$. These results will then be used to develop the Bismut-Elworthy-Li formula \eqref{eq:RegDelta}.

\subsection{Strong Differentiability}
	First recall that due to \Cref{cor:SobolevDifferentiableRecall} the unique strong solution $X^x$ of mean-field SDE \eqref{lebesgueMFSDE} is Sobolev differentiable under the assumption that $b$ is measurable, bounded, and Lipschitz continuous in the third variable \eqref{lipschitzThirdMF}, and $\varphi$ is measurable, of at most linear growth \eqref{linearGrowthMF}, and Lipschitz continuous in the third variable \eqref{lipschitzThirdMF}. Our aim is to find sufficient assumptions on $b$ and $\varphi$ such that the unique strong solution $X^x$ of \eqref{lebesgueMFSDE} is continuously differentiable in the initial condition.

\begin{proposition}\label{thm:strongDerivative}
	Suppose $b, \varphi \in\Cfrak([0,T]\times \Rbb^d \times \Rbb^d)$. Let $(X_t^{x})_{t\in[0,T]}$ be the unique strong solution of mean-field SDE \eqref{lebesgueMFSDE}. Then for every compact subset $K\subset \Rbb^d$ there exists some constant $C>0$ such that for every $t \in [0,T]$ and $x,y\in K$
	\begin{align*}
		\Eabs{\partial_x X_t^x - \partial_y X_t^y} \leq C \Vert x-y \Vert.
	\end{align*}
	In particular, the map $x \mapsto X_t^x$ is continuously differentiable for every $t\in[0,T]$ and for every $1\leq p < \infty$
	\begin{align}\label{eq:boundDerivative}
		\sup_{t\in [0,T]} \sup_{x\in K} \Ep{\partial_x X_t^x}{p} < \infty.
	\end{align}
\end{proposition}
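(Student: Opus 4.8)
The plan is to establish the Lipschitz estimate on the first variation process $\partial_x X_t^x$ directly, using the representation of $\partial_x X_t^x$ in terms of the Malliavin derivative and the regularity of the coefficients. Since $b,\varphi \in \Cfrak([0,T]\times\Rbb^d\times\Rbb^d)$, both are bounded with bounded and Lipschitz derivatives in the space and law variables, so in particular $(b\diamond\varphi)$ is bounded and Lipschitz in the space variable; thus by \Cref{cor:SobolevDifferentiableRecall} the solution is Sobolev differentiable, and the first variation process solves a linear equation. Concretely, writing $\beta_s^x := \partial_2 b(s,X_s^x,\mu_s^x) + \partial_3 b(s,X_s^x,\mu_s^x)\int_{\Rbb^d}\partial_2\varphi(s,X_s^x,z)\,\Pbb_{X_s^x}(dz)$ with $\mu_s^x := \int_{\Rbb^d}\varphi(s,X_s^x,z)\Pbb_{X_s^x}(dz)$, the process $\partial_x X_t^x$ satisfies (in the one-dimensional case this is \eqref{eq:derivativeMalliavin}; the multidimensional analogue is the matrix exponential / Doléan-Dade type representation). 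First I would make rigorous that $\partial_x X_t^x$ solves
\begin{align*}
	\partial_x X_t^x = I + \int_0^t \beta_s^x\, \partial_x X_s^x\, ds
\end{align*}
in an appropriate (weak, then strong) sense — here the subtlety is that $b$ is only assumed $\Ccal^{1,1}_b$ in $y$ and $z$, which is enough for $\beta_s^x$ to be bounded but forces us to control how $\beta_s^x$ depends on $x$ through $X_s^x$ and through the measure $\Pbb_{X_s^x}$.

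The core estimate then splits into two parts. \emph{Step 1 (a priori bound):} Since $\|\beta_s^x\|\le C$ uniformly, Grönwall immediately gives $\|\partial_x X_t^x\|\le e^{CT}$ pointwise, hence \eqref{eq:boundDerivative} for every $p$ (the bound is deterministic). \emph{Step 2 (Lipschitz in $x$):} For $x,y\in K$ write
\begin{align*}
	\partial_x X_t^x - \partial_y X_t^y = \int_0^t \beta_s^x(\partial_x X_s^x - \partial_y X_s^y)\,ds + \int_0^t (\beta_s^x - \beta_s^y)\,\partial_y X_s^y\,ds,
\end{align*}
take $\Ebb[\|\cdot\|]$, and apply Grönwall again; it then suffices to bound $\Ebb[\|\beta_s^x-\beta_s^y\|]$ by $C\|x-y\|$. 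Expanding $\beta_s^x-\beta_s^y$ and using that $\partial_2 b,\partial_3 b,\partial_2\varphi$ are themselves Lipschitz (in both the space and law arguments) together with boundedness of $\partial_2\varphi$, this reduces to estimating $\Ebb[\|X_s^x-X_s^y\|]$ and the difference of the measure-dependent terms $\|\mu_s^x-\mu_s^y\|$ and $\|\int\partial_2\varphi(s,X_s^x,z)\Pbb_{X_s^x}(dz)-\int\partial_2\varphi(s,X_s^y,z)\Pbb_{X_s^y}(dz)\|$. The first is controlled by \Cref{cor:Hölder}, which gives $\Ebb[\|X_s^x-X_s^y\|^2]\le C\|x-y\|^2$, hence $\Ebb[\|X_s^x-X_s^y\|]\le C\|x-y\|$. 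For the measure terms one couples the two solutions on the same probability space (via Girsanov, as in the proof of \Cref{extensionSolution}) and uses that $\partial_2\varphi$ is bounded and Lipschitz in $y$, so that $\|\mu_s^x-\mu_s^y\| = \|\Ebb[\varphi(s,y,X_s^x)] - \Ebb[\varphi(s,y,X_s^y)]\|_{y=\cdot}$-type differences are again dominated by $\Ebb[\|X_s^x-X_s^y\|]\le C\|x-y\|$ plus, where needed, by a Kantorovich-distance bound $\Kcal(\Pbb_{X_s^x},\Pbb_{X_s^y})\le C\|x-y\|$ coming from the same Hölder estimate.

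Finally, continuity of $x\mapsto \partial_x X_t^x$: the Lipschitz bound $\Eabs{\partial_x X_t^x-\partial_y X_t^y}\le C\|x-y\|$ shows $x\mapsto \partial_x X_t^x$ is continuous as an $L^1(\Omega)$-valued map on compacts, and combined with the uniform $L^p$ bound \eqref{eq:boundDerivative} and the fact that $X^x$ is already known to be Sobolev differentiable (so the weak derivative agrees with this continuous modification), one upgrades to: the weak derivative has a continuous version, i.e. $x\mapsto X_t^x$ is continuously differentiable.

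The main obstacle I anticipate is \textbf{Step 2's measure-difference estimate}: unlike the classical SDE case, $\beta_s^x$ depends on $x$ not only through the pathwise quantity $X_s^x$ but also through the law $\Pbb_{X_s^x}$ inside $\varphi$ and through the nested derivative $\int\partial_2\varphi(s,X_s^x,z)\Pbb_{X_s^x}(dz)$, and the arguments of $b$'s derivatives $\partial_2 b,\partial_3 b$ are evaluated at these law-dependent points — so one must carefully propagate the Hölder/Kantorovich control through two layers of composition while only using $\Ccal^{1,1}_b$-regularity (no second derivatives of $b$ in the space variable beyond Lipschitzness of the first). Handling the change of measure to decouple $\Pbb_{X_s^x}$ from $\Pbb_{X_s^y}$ cleanly, with uniform control of the Doléan-Dade densities over $x\in K$ (which follows from boundedness of $b$), is the technical heart of the argument.
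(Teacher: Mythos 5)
Your starting point—the claim that $\partial_x X_t^x$ solves the pathwise-linear equation $\partial_x X_t^x=I+\int_0^t\beta_s^x\,\partial_x X_s^x\,ds$ with $\beta_s^x=\partial_2 b(s,X_s^x,\mu_s^x)+\partial_3 b(s,X_s^x,\mu_s^x)\int_{\Rbb^d}\partial_2\varphi(s,X_s^x,z)\,\Pbb_{X_s^x}(dz)$—is not correct, and this is a genuine gap. Differentiating $\mu_s^x=\int_{\Rbb^d}\varphi(s,X_s^x,z)\,\Pbb_{X_s^x}(dz)$ in $x$ produces, besides the $\partial_2\varphi$-term you keep, a contribution from the $x$-dependence of the law itself, namely $\Ebb\left[\partial_3\varphi(s,y,X_s^x)\,\partial_x X_s^x\right]\big\vert_{y=X_s^x}$, which enters multiplied by $\partial_3 b$. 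Your equation (which you identify with \eqref{eq:derivativeMalliavin}) is the one satisfied by the Malliavin derivative, which does not feel the law; the first variation process differs from it precisely by this law-derivative term, as relation \eqref{eq:RegRelationDerivatives} and representation \eqref{eq:RegRepresentationFV} make explicit. Since the missing term is nonlocal (an expectation of $\partial_3\varphi\,\partial_x X_s^x$ over the law), it cannot be absorbed into a pathwise linear driver: your Step 1 (pathwise Gr\"onwall giving a deterministic bound $\Vert\partial_x X_t^x\Vert\le e^{CT}$, hence \eqref{eq:boundDerivative}) and the two-term decomposition in Step 2 are both built on the wrong equation. In the correct estimate the difference of the law-derivative terms generates the additional quantities $\Eabs{\partial_x X_s^x-\partial_y X_s^y}$ and $\Ep{\partial_x X_s^x}{2}\,\Ep{X_s^x-X_s^y}{2}$, which is exactly what the paper controls through its bounds on $\partial_x\rho(X_s^x)$ and $\partial_x\rho(X_s^x)-\partial_y\rho(X_s^y)$ before applying Gr\"onwall.

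Apart from this omission, your skeleton parallels the paper's: the paper writes $\partial_x X_t^x=1+\int_0^t\partial_2 b\,\partial_x X_s^x+\partial_3 b\,\partial_x\rho(X_s^x)\,ds$ with $\partial_x\rho$ carrying both the $\partial_2\varphi$- and $\partial_3\varphi$-contributions, uses Lipschitzness and boundedness of the derivatives, invokes the H\"older estimate \eqref{eq:RegHoelderContinuity} from \Cref{cor:Hölder}, applies a Gr\"onwall-type inequality (Jones' version), and obtains a continuous version of $x\mapsto\partial_x X_t^x$ via Kolmogorov's continuity theorem. Two further corrections: the uniform bound \eqref{eq:boundDerivative} is not obtained by your deterministic argument but imported from \cite[Lemma 4.13]{Bauer_MultiDim} as an essential supremum, after which a suitable version of the weak derivative is fixed; and no Girsanov coupling or Kantorovich bound is needed for the measure differences, since boundedness and Lipschitzness of $\partial_2\varphi,\partial_3\varphi$ give $\Ep{\rho(X_t^x)-\rho(X_t^y)}{2}\lesssim\Ep{X_t^x-X_t^y}{2}$ directly, as in \eqref{eq:rhoContinuity}.
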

\begin{proof}
	Since $X^x$ is Sobolev differentiable by \Cref{cor:SobolevDifferentiableRecall} and
	\begin{align*}
		\sup_{t\in [0,T]} \esssup_{x\in K} \Ep{\partial_x X_t^x}{p} < \infty,
	\end{align*}
	by \cite[Lemma 4.13]{Bauer_MultiDim}, it suffices to show that $\partial_x X^x$ is almost surely continuous in $x\in K$. Note that we can choose an element of the equivalence class of weak derivatives $\partial_x X^x$ such that \eqref{eq:boundDerivative} holds. For the remainder of this proof we just consider this particular element and denote it without loss of generality by $\partial_x X^x$. Let $x,y \in K$ and $t\in[0,T]$ be arbitrary. Note that the first variation process $\partial_x X^x$ has the representation
	\begin{align*}
		\partial_x X_t^x = 1+ \int_0^t \partial_2 b(s,X_s^x, \rho(X_s^x)) \partial_x X_s^x + \partial_3 b(s,X_s^x, \rho(X_s^x)) \partial_x \rho(X_s^x) ds,
	\end{align*}
where $\rho(X_t^x) := \int_{\Rbb^d} \varphi(t,X_t^x,z) \Pbb_{X_t^x}(dz).$ Thus, using Minkowski's and Hölder's inequalities we get
	\begin{align*}
		&\Eabs{\partial_x X_t^x - \partial_y X_t^y} \\
		&\quad \leq \int_0^t \Eabs{\partial_2 b(s,X_s^x, \rho(X_s^x)) \partial_x X_s^x - \partial_2 b(s,X_s^y, \rho(X_s^y)) \partial_y X_s^y} \\
		&\qquad + \Eabs{\partial_3 b(s,X_s^x, \rho(X_s^x)) \partial_x \rho(X_s^x) - \partial_3 b(s,X_s^y, \rho(X_s^y)) \partial_y \rho(X_s^y)} ds \\
		&\quad \lesssim \int_0^t \Ep{\partial_2 b(s,X_s^x, \rho(X_s^x)) - \partial_2 b(s,X_s^y, \rho(X_s^y))}{2} \Ep{\partial_x X_s^x}{2}\\
		&\qquad + \Ep{\partial_3 b(s,X_s^x, \rho(X_s^x)) - \partial_3 b(s,X_s^y, \rho(X_s^y))}{2} \Ep{\partial_x \rho(X_s^x)}{2}\\
		&\qquad + \Eabs{\partial_x X_s^x - \partial_y X_s^y} + \Eabs{\partial_x \rho(X_s^x) - \partial_y \rho(X_s^y)} ds\\
		&\quad \lesssim \int_0^t \left( \Ep{X_s^x - X_s^y}{2} + \Ep{\rho(X_s^x) - \rho(X_s^y)}{2} \right) \\
		&\qquad \times \left( \Ep{\partial_x X_s^x}{2} + \Ep{\partial_x \rho(X_s^x)}{2} \right) \\
		&\qquad + \Eabs{\partial_x X_s^x - \partial_y X_s^y} + \Eabs{\partial_x \rho(X_s^x) - \partial_y \rho(X_s^y)} ds.
	\end{align*}
	Using the assumptions on $\varphi$ we get
	\begin{align}\label{eq:rhoContinuity}
		\Ep{\rho(X_t^x) - \rho(X_t^y)}{2} &= \Ep{ \EW{\varphi(t,z_1,X_t^x)  - \varphi(t,z_2,X_t^y)}_{z_1=X_t^x; z_2= X_t^y}}{2} \notag\\
		&\lesssim \Ep{ \Eabs{z_1-z_2 \right\Vert + \left\Vert X_t^x  - X_t^y}_{z_1=X_t^x; z_2= X_t^y}}{2} \notag\\
		&\leq \Ep{X_t^x - X_t^y}{2}.
	\end{align}
	Furthermore, using the chain rule we have that
	\begin{align*}
		&\Ep{\partial_x \rho(X_t^x)}{2} \\
		&\quad = \Ep{\EW{\partial_2 \varphi(t,z,X_t^x)}_{z=X_t^x} \partial_x X_t^x + \EW{\partial_3 \varphi(t,z,X_t^x) \partial_x X_t^x}_{z=X_t^x}}{2} \\
		&\quad \lesssim \Ep{\partial_x X_t^x}{2} + \Eabs{\partial_x X_t^x} \leq \Ep{\partial_x X_t^x}{2}.
	\end{align*}
	Equivalently we obtain that 
	\begin{align*}
		&\Eabs{\partial_x \rho(X_t^x) - \partial_y \rho(X_t^y)} \\
		&\quad \leq \Eabs{\EW{\partial_2 \varphi(t,z,X_t^x)}_{z=X_t^x} \partial_x X_t^x - \EW{\partial_2 \varphi(t,z,X_t^y)}_{z=X_t^y} \partial_y X_t^y}\\
		&\qquad + \Eabs{\EW{\partial_3 \varphi(t,z,X_t^x) \partial_x X_t^x}_{z=X_t^x} - \EW{\partial_3 \varphi(t,z,X_t^y) \partial_y X_t^y}_{z=X_t^y}} \\
		&\quad \leq \Eabs{\EW{\partial_2 \varphi(t,z_1,X_t^x) - \partial_2 \varphi(t,z_2,X_t^y)}_{z_1= X_t^x; z_2 =X_t^y} \right\Vert \left\Vert \partial_x X_t^x}\\
		&\qquad + \Eabs{\partial_x X_t^x - \partial_y X_t^y\right\Vert^p \left\Vert \EW{\partial_2 \varphi(t,z,X_t^y)}_{z=X_t^y} }\\
		&\qquad + \Eabs{\Eabs{\partial_3 \varphi(t,z_1,X_t^x) - \partial_3 \varphi(t,z_2,X_t^y)\right\Vert \left\Vert \partial_x X_t^x}_{z_1=X_t^x; z_2=X_t^y}} \\
		&\qquad + \Eabs{\Eabs{ \partial_x X_t^x - \partial_y X_t^y \right\Vert \left\Vert \partial_3 \varphi(t,z,X_t^y)}_{z=X_t^y}}\\
		&\lesssim \Ep{\EW{\partial_2 \varphi(t,z_1,X_t^x) - \partial_2 \varphi(t,z_2,X_t^y)}_{z_1= X_t^x; z_2 =X_t^y}}{2} \Ep{\partial_x X_t^x}{2}\\
		&\qquad + \Eabs{\partial_x X_t^x - \partial_y X_t^y} \\
		&\qquad + \Eabs{\Ebb\left[\left\Vert \partial_3 \varphi(t,z_1,X_t^x) - \partial_3 \varphi(t,z_2,X_t^y)\right\Vert^2 \right]^{\frac{1}{2}}_{z_1=X_t^x; z_2=X_t^y}} \Ep{\partial_x X_t^x}{2}\\
		&\qquad + \Eabs{ \partial_x X_t^x - \partial_y X_t^y}\\ 
		&\quad \lesssim \Ep{\partial_x X_t^x}{2}\Ep{X_t^x - X_t^y}{2} + \Eabs{\partial_x X_t^x - \partial_y X_t^y}.
	\end{align*}
	Thus, in combination with \eqref{eq:boundDerivative} we get
	\begin{align*}
		&\Eabs{\partial_x X_t^x - \partial_y X_t^y} \lesssim \int_0^t \Ep{X_s^x - X_s^y}{2} + \Eabs{\partial_x X_s^x - \partial_y X_s^y} ds.
	\end{align*}
	Using equation \eqref{eq:RegHoelderContinuity}, we get
	\begin{align*}
		&\Eabs{\partial_x X_t^x - \partial_y X_t^y} \lesssim \vert x - y \vert + \int_0^t \Eabs{\partial_x X_s^x - \partial_y X_s^y} ds.
	\end{align*}
	Finally, since $\Eabs{\partial_x X_s^x - \partial_y X_s^y}$ is integrable over $[0,T]$ and Borel measurable, we can apply Jones' generalization of Grönwall's inequality \cite[Lemma 5]{Jones_FundamentalInequalities} to get
	\begin{align*}
		\Eabs{\partial_x X_t^x - \partial_y X_t^y} \lesssim \vert x-y \vert.
	\end{align*}
	Thus, $\partial_x X^x$ has an almost surely continuous version in $x\in K$ by Kolmogorov's continuity theorem and consequently $X^x$ is continuously differentiable for every $t\in[0,T]$.
\end{proof}

\subsection{Bismut-Elworthy-Li formula}

In this subsection we turn our attention to the Bismut-Elworthy-Li formula \eqref{eq:RegDelta}. With the help of the approximating sequence defined in \eqref{eq:RegApproximatingMFSDEExp} we show in the one-dimensional case, i.e. $d=1$, that $\partial_x \Ebb[\Phi(X_T^x)]$ exists in the strong sense for functionals $\Phi$ merely satisfying some integrability condition, i.e. we show that $\Ebb[\Phi(X_T^x)]$ is continuously differentiable.


\begin{lemma}\label{lem:RegRepDelta2}
	Consider $d=1$. Let $(b \diamond \varphi)$ admit a decomposition \eqref{eq:RegFormDrift} and let $b, \varphi \in \Lcal([0,T] \times \Rbb \times \Rbb)$. Further, let $(X_t^x)_{t\in [0,T]}$ be the unique strong solution of mean-field SDE \eqref{lebesgueMFSDE} and $\Phi \in \Ccal_b^{1,1}(\Rbb)$. Then $\Ebb[\Phi(X_t^x)] \in \Ccal^1(\Rbb)$ and 
	\begin{align}\label{representationDerivative}
		\partial_x \Ebb \left[ \Phi(X_t^x) \right] = \Ebb \left[\Phi'(X_t^x) \partial_x X_t^x \right],
	\end{align}
	where $\Phi'$ denotes the first derivative of $\Phi$ and $\partial_x X_t^x$ is the first variation process of $X_t^x$ as given in \eqref{eq:RegRepresentationFV}.
\end{lemma}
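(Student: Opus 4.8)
The plan is to prove both the $\Ccal^1$-regularity and the representation \eqref{representationDerivative} at once, by passing to the limit in the classical chain rule along the smooth approximating mean-field SDE \eqref{eq:RegApproximatingMFSDEExp}. Write $(X_t^{n,x})_{t\in[0,T]}$ for its unique strong solution. Since the mollified coefficients are smooth and in particular lie in $\Cfrak([0,T]\times\Rbb\times\Rbb)$, \Cref{thm:strongDerivative} shows that $x\mapsto X_t^{n,x}$ is continuously differentiable with $\sup_{t\in[0,T]}\sup_{x\in K}\Ep{\partial_x X_t^{n,x}}{p}<\infty$; and since mollification preserves the boundedness, linear growth and Lipschitz constants entering the decomposition \eqref{eq:RegFormDrift}, \Cref{lem:RegBoundsSolution} makes this moment bound uniform in $n$. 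As $\Phi\in\Ccal_b^{1,1}(\Rbb)$ has bounded derivative, the classical chain rule and differentiation under the expectation — justified by the uniform moment bound — give $\Ebb[\Phi(X_t^{n,x})]\in\Ccal^1(\Rbb)$ with $\partial_x\Ebb[\Phi(X_t^{n,x})]=\Ebb[\Phi'(X_t^{n,x})\partial_x X_t^{n,x}]$.

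I would then pass to the limit. From the proof of \Cref{extensionSolution} and the Sobolev-differentiability result of \cite{Bauer_StrongSolutionsOfMFSDEs} one has $X_t^{n,x}\to X_t^x$ in $L^2(\Omega)$ for every $x$, and the classical derivatives $\partial_x X_t^{n,x}$ converge weakly in $L^2(\Omega)$ to the Sobolev derivative $\partial_x X_t^x$ of \eqref{eq:RegRepresentationFV}. Hence $\Ebb[\Phi(X_t^{n,x})]\to\Ebb[\Phi(X_t^x)]$ by dominated convergence ($\Phi$ bounded and continuous). Splitting $\Ebb[\Phi'(X_t^{n,x})\partial_x X_t^{n,x}]-\Ebb[\Phi'(X_t^x)\partial_x X_t^x]$ into $\Ebb[(\Phi'(X_t^{n,x})-\Phi'(X_t^x))\partial_x X_t^{n,x}]$ plus $\Ebb[\Phi'(X_t^x)(\partial_x X_t^{n,x}-\partial_x X_t^x)]$, the first term is bounded by $\Ep{\Phi'(X_t^{n,x})-\Phi'(X_t^x)}{2}\,\Ep{\partial_x X_t^{n,x}}{2}\lesssim\Ep{X_t^{n,x}-X_t^x}{2}\to 0$ by the Lipschitz continuity of $\Phi'$ and the uniform moment bound, while the second vanishes by the weak $L^2$-convergence tested against the bounded random variable $\Phi'(X_t^x)$. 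Thus $\partial_x\Ebb[\Phi(X_t^{n,x})]\to\Ebb[\Phi'(X_t^x)\partial_x X_t^x]$ pointwise in $x$.

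Finally, to turn this pointwise convergence of the derivatives into locally uniform convergence — after which Arzel\`a--Ascoli together with the standard theorem on differentiating a locally uniformly convergent sequence yields $\Ebb[\Phi(X_t^x)]\in\Ccal^1(\Rbb)$ and \eqref{representationDerivative} — I would show that $\{x\mapsto\partial_x\Ebb[\Phi(X_t^{n,x})]\}_{n}$ is equicontinuous on each compact $K\subset\Rbb$, i.e. $|\Ebb[\Phi'(X_t^{n,x})\partial_x X_t^{n,x}]-\Ebb[\Phi'(X_t^{n,y})\partial_y X_t^{n,y}]|\le C|x-y|$ for $x,y\in K$ with $C$ independent of $n$. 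Writing this difference again as $\Ebb[(\Phi'(X_t^{n,x})-\Phi'(X_t^{n,y}))\partial_x X_t^{n,x}]$ plus $\Ebb[\Phi'(X_t^{n,y})(\partial_x X_t^{n,x}-\partial_y X_t^{n,y})]$, the first piece is $\lesssim\Ep{X_t^{n,x}-X_t^{n,y}}{2}\,\Ep{\partial_x X_t^{n,x}}{2}\lesssim|x-y|$ by the Hölder estimate \eqref{eq:RegHoelderContinuity}, whose constant is also uniform in $n$, together with the uniform moment bound. The main obstacle is the second piece: \Cref{thm:strongDerivative} cannot be applied to $X_t^{n,\cdot}$ directly, because the $\Cfrak$-constant of the mollified coefficients blows up and the representation \eqref{eq:RegRepresentationFV} for $\partial_x X_t^{n,\cdot}$ brings in the space-derivative of $(b_n\diamond\varphi_n)$, which is unbounded in $n$ owing to the merely measurable component $\hat b$ of the decomposition \eqref{eq:RegFormDrift}. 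Instead I would use the relation \eqref{eq:RegRelationDerivatives} with $s=0$ and the representation \eqref{eq:derivativeMalliavin} of the Malliavin derivative $D_uX_t^{n,x}$ — which involves only first derivatives of $b_n$ integrated against local time and is therefore controllable uniformly in $n$ by Girsanov's theorem and local-time estimates — and then a Malliavin integration by parts that transfers the remaining space-derivative of $(b_n\diamond\varphi_n)$ onto the factor $\Phi'(X_t^{n,\cdot})\,D_uX_t^{n,\cdot}$; it is exactly at this step that the boundedness and Lipschitz continuity of $\Phi'$ are used, trading a space-derivative against the Gaussian density of the Brownian motion obtained after the Girsanov change of measure, and producing a bound $C|x-y|$ with $C$ independent of $n$. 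Everything else — the chain rule for the smooth approximations, the two pointwise limits, differentiation under the expectation, and the Arzel\`a--Ascoli argument — is routine given \Cref{sec:Recall} and \Cref{thm:strongDerivative}.
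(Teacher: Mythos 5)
Your skeleton — smooth approximations \eqref{eq:RegApproximatingMFSDEExp}, chain rule plus \Cref{thm:strongDerivative} for each fixed $n$, then passing to the limit after upgrading pointwise convergence of the derivatives to locally uniform convergence — is exactly the paper's strategy. But the core analytic content of the lemma is precisely the locally uniform control in $n$, and there your argument has a genuine gap. You correctly observe that \Cref{thm:strongDerivative} cannot be applied uniformly in $n$, but the remedy you sketch (a Malliavin integration by parts ``transferring the space-derivative of $(b_n\diamond\varphi_n)$ onto $\Phi'(X^{n,\cdot})D_uX^{n,\cdot}$'') is never carried out and, more importantly, misplaces where the $n$-dependence is dangerous. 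In the representation \eqref{eq:RegRelationDerivatives} the term $\partial_x(b_n\diamond\varphi_n)(u,y,\Pbb_{X_u^{n,x}})\vert_{y=X_u^{n,x}}$ is a derivative with respect to the initial condition acting only through the law, i.e. it equals $\partial_3 b_n(u,y,\varrho^{n,x}_u(y))\,\partial_x\varrho^{n,x}_u(y)$; since the mollifications can be chosen with $b_n,\varphi_n\in\Lcal([0,T]\times\Rbb\times\Rbb)$ with a constant independent of $n$, this factor is uniformly bounded (cf. \eqref{eq:boundPartialXb}) and needs no integration by parts against $\Phi'$ at all. The genuinely unbounded-in-$n$ space derivatives of the mollified irregular part $\bhat_n$ sit inside the local-time exponentials $D_n(s,t,x)$ and the Doléans exponentials $E_n(x)$ after the Girsanov change of measure, and controlling these uniformly and showing their uniform-on-compacts convergence is exactly what \Cref{cor:RegBoundLocalTime}, \Cref{cor:RegUniformConvergenceEcal} and \Cref{cor:RegUniformConvergenceLocalTime} provide; in addition, $\partial_x\varrho^{n,x}_u(y)=\Ebb[\partial_3\varphi(u,y,X_u^{n,x})\partial_xX_u^{n,x}]$ feeds back into $\partial_xX^{n,x}$, so the resulting estimate is self-referential and must be closed by a Grönwall argument in $t$, and the convergence $\partial_3 b_n\to\partial_3 b$ has to be made uniform over the compact image set $\Lambda^K=\lbrace\varrho^x_s(y):x\in K\rbrace$. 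None of these steps appears in your proposal, so the claimed bound $C|x-y|$ with $C$ independent of $n$ (equivalently, the uniform convergence of $\Ebb[\Phi'(X_t^{n,x})\partial_xX_t^{n,x}]$ on compacts) remains unsubstantiated.

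A secondary issue: you invoke weak $L^2(\Omega)$ convergence of $\partial_xX_t^{n,x}$ to $\partial_xX_t^x$ at every fixed $x$ to identify the pointwise limit. What is available from \cite{Bauer_StrongSolutionsOfMFSDEs} is weak convergence in a weighted $L^2$ space over $(x,\omega)$, giving an identification only for almost every $x$; for a fixed $x$ this requires an additional argument. The paper avoids this altogether: the identity \eqref{representationDerivative} in the weak sense is already known from \Cref{cor:BismutFormulaRecall} together with \cite[Lemma 4.1]{Bauer_StrongSolutionsOfMFSDEs}, so it only remains to prove continuity of $x\mapsto\Ebb[\Phi'(X_t^x)\partial_xX_t^x]$, which is obtained directly from the uniform convergence described above rather than from an equicontinuity/Arzel\`a--Ascoli argument.
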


In order to proof \Cref{lem:RegRepDelta2}, we need to define a sequence of mean-field equations similar to \cite{Bauer_StrongSolutionsOfMFSDEs} whose unique strong solutions approximate the unique strong solution of \eqref{lebesgueMFSDE}, where $(b \diamond \varphi)$ fulfills the assumptions of \Cref{lem:RegRepDelta2}. More precisely, by standard approximation arguments there exist sequences
\begin{align}\label{eq:RegApproximatingDrift}
	b_n:= \btilde_n + \bhat_n, \quad \text{ and } \quad \varphi_n := \tilde{\varphi}_n + \hat{\varphi}_n, \quad n\geq 1,
\end{align}
where $b_n, \varphi_n \in \Ccal_0^{\infty}( [0,T] \times \Rbb \times \Rbb)$ with \[\sup_{n\geq 1} \left( \Vert \btilde_n\Vert_{\infty} + \Vert \tilde{\varphi}_n\Vert_{\infty} \right) \leq C < \infty\] and \[\sup_{n\geq 1}\left( \vert \bhat_n(t,y,z) \vert + \vert \hat{\varphi}_n(t,y,z) \vert \right) \leq C(1+\vert y \vert + \vert z \vert)\] for every $t\in[0,T]$ and $y,z \in \Rbb$, such that $b_n \to b$ and $\varphi_n \to \varphi$ in a.e. $(t,y,z) \in [0,T] \times \Rbb \times \Rbb$ with respect to the Lebesgue measure, respectively. The original drift coefficients $b$ and $\varphi$ are denoted by $b_0$ and $\varphi_0$, respectively. Furthermore, we can assume that there exists a constant $C>0$ independent of $n\in \Nbb$ such that \[b_n, \varphi_n \in \Lcal([0,T]\times \Rbb \times \Rbb),\] and that $\bhat_n$ and $\hat{\varphi}_n$ are Lipschitz continuous in the second variable \eqref{lipschitzSecondMF} for all $n\geq 0$. Under these conditions the corresponding mean-field SDEs, defined by
\begin{align}\label{eq:RegApproximatingMFSDEExp}
	dX_t^{n,x} &= b_n\left(t,X_t^{n,x}, \int_{\Rbb} \varphi_n(t,X_t^{n,x}, z) \Pbb_{X_t^{n,x}}(dz) \right) dt + dB_t, ~t\in[0,T],\\
	X_0^{n,x} &= x \in \Rbb, \notag
\end{align}
have unique strong solutions which are Malliavin differentiable by \Cref{thm:Malliavin}. Likewise the strong solutions $\lbrace X^{n,x}\rbrace_{n\geq 0}$ are continuously differentiable with respect to the initial condition by \Cref{thm:strongDerivative}. Due to \Cref{cor:L2Convergence} we have that $(X_t^{n,x})_{t\in[0,T]}$ converges to $(X_t^x)_{t\in[0,T]}$ in $L^2(\Omega)$ as $n\to \infty$ and similar to \cite[Lemma 3.10]{Bauer_StrongSolutionsOfMFSDEs} one can show for any compact subset $K\subset \Rbb$ and $p\geq 1$ that
\begin{align}\label{eq:boundDerivativeUniform}
	\sup_{n\geq 0} \sup_{t\in[0,T]} \sup_{x\in K} \Ep{\partial_x X_t^{n,x}}{p} < \infty.
\end{align}

\begin{proof}[Proof of \Cref{lem:RegRepDelta2}]
Note first that $\Ebb \left[ \Phi(X_t^x) \right]$ is weakly differentiable by \Cref{cor:BismutFormulaRecall} and equation \eqref{representationDerivative} holds by \cite[Lemma 4.1]{Bauer_StrongSolutionsOfMFSDEs}. Hence it suffices to show that $\partial_x \Ebb[\Phi(X_t^x)]$ is continuous. In order to prove this we show that 
	\begin{align*}
		\Ebb[\Phi(X_t^{n,x})] &\xrightarrow[n\to\infty]{} \Ebb[\Phi(X_t^x)] \quad \forall x\in \Rbb, \text{ and}\\
		\Ebb \left[ \Phi'(X_t^{n,x}) \partial_x X_t^{n,x} \right] &\xrightarrow[n\to\infty]{} \Ebb \left[ \Phi'(X_t^x) \partial_x X_t^x \right] \quad \text{uniformly for } x\in  K,
	\end{align*}
	where $\lbrace(X_t^{n,x})_{t\in[0,T]} \rbrace_{n\geq 1}$ is the approximating sequence defined in \eqref{eq:RegApproximatingMFSDEExp} and $K \subset \Rbb$ is a compact subset. Note that $$\partial_x \Ebb[\Phi(X_t^{n,x})]= \Ebb \left[ \Phi'(X_t^{n,x}) \partial_x X_t^{n,x} \right]$$ is continuous in $x$ due to \Cref{thm:strongDerivative}. The first convergence follows directly by \Cref{rem:convergenceRho}. For the uniform convergence let $K\subset \Rbb$ be a compact set and define for $n\geq 0$
	\begin{align*}
		D_n(s,t,x) &:= \exp \left\lbrace -\int_s^t \int_{\Rbb} b_n(u,y,\varrho_u^{n,x}(y)) L^{B^x}(du,dy) \right\rbrace, \text{ and}\\
		E_n(x) &:= \Ecal\left( \int_0^T b_n(s,B_s^x,\varrho_s^{n,x}(B_s^x)) dB_s \right),
	\end{align*}
	where $\varrho_u^{n,x}(y) := \int_\Rbb \varphi(u,y,z) \Pbb_{X_u^{n,x}}(dz)$. In a first approximation we get using $\Vert \Phi' \Vert_{\infty} < \infty$ and representation \eqref{eq:RegRelationDerivatives} that
	\begin{align*}
	\begin{split}
		&\left| \Ebb \left[ \Phi'(X_t^{n,x}) \partial_x X_t^{n,x} - \Phi'(X_t^x) \partial_x X_t^x \right] \right| \\
		&\quad \lesssim \Ebb \left[ \left| E_n(x) \left( D_n(0,t,x) + \int_0^t D_n(s,t,x) \partial_x b_n(s,y,\varrho_s^{n,x}(y))\vert_{y=B_s^x} ds \right) \right. \right.\\
		&\qquad -\left.\left. E_0(x) \left( D_0(0,t,x) + \int_0^t D_0(s,t,x) \partial_x b_n(s,y,\varrho_s^{n,x}(y))\vert_{y=B_s^x} ds \right) \right| \right] \\
		&\quad =: A_n(t,x).
	\end{split}
	\end{align*}
	Equivalently, we get using $\Vert \partial_3 \varphi \Vert_\infty < \infty$ that for every $t\in [0,T]$ and $y \in \Rbb$
	\begin{align*}
		\vert \partial_x \varrho_t^{n,x}(y)  - \partial_x \varrho_t^x(y) \vert = \left\vert \Ebb \left[ \partial_3 \varphi(t,y,X_t^{n,x}) \partial_x X_t^{n,x} - \partial_3 \varphi(t,y,X_t^x) \partial_x X_t^x \right] \right\vert  \lesssim A_n(t,x).
	\end{align*}
	Note furthermore that by \eqref{eq:boundDerivativeUniform} we have for every $y\in \Rbb$ that
	\begin{align}\label{eq:boundPartialXb}
		\vert \partial_x b_n(s,y,\varrho_s^{n,x}(y)) \vert &= \vert \partial_3 b_n(s,y,\varrho_s^{n,x}(y)) \partial_x \varrho_s^{n,x}(y) \vert \lesssim \left\vert \Ebb \left[ \partial_3 \varphi(t,y,X_t^{n,x}) \partial_x X_t^{n,x}\right] \right\vert \notag \\
		&\leq \Ebb[\vert \partial_x X_t^{n,x} \vert] < \infty,
	\end{align}
	and for every $p\geq 1$
	\begin{align}\label{eq:lipPartialXb}
		&\EpE{\partial_x b_n(t,y,\varrho_t^{n,x}(y))\vert_{y=B_t^x} - \partial_x b(t,y,\varrho_t^{x}(y))\vert_{y=B_t^x}}{p} \notag \\
		&\quad \lesssim \EpE{\partial_3 b_n(t,B_t^x,\varrho_t^{n,x}(B_t^x))- \partial_3 b(t,B_t^x,\varrho_t^{x}(B_t^x))}{p} \notag\\
		&\qquad + \EpE{\partial_x \varrho_t^{n,x}(B_t^x)  - \partial_x \varrho_t^x(B_t^x)}{p} \\
		&\quad \lesssim \EpE{\partial_3 b_n(t,B_t^x,\varrho_t^{n,x}(B_t^x))- \partial_3 b(t,B_t^x,\varrho_t^{x}(B_t^x))}{p} + A_n(t,x). \notag
	\end{align}
	Using Hölder's inequality, \eqref{eq:boundPartialXb}, \Cref{lem:RegBoundsSolution}, and \Cref{cor:RegBoundLocalTime} we can decompose $A_n(t,x)$ into
	\begin{small}
	\begin{align*}
		&A_n(t,x) \\
		&\lesssim \Ebb \left[ \left| E_n(x) - E_0(x) \right| \left| D_n(0,t,x) + \int_0^t D_n(s,t,x) \partial_x b_n(s,y,\varrho_s^{n,x}(y))\vert_{y=B_s^x} ds \right| \right] \\
		&\quad + \Ebb \left[ E_0(x) \left| D_n(0,t,x) - D_0(0,t,x) \right| \right] \\
		&\quad + \Ebb \left[ E_0(x) \left| \int_0^t  D_n(s,t,x) \partial_x b_n(s,y,\varrho_s^{n,x}(y))\vert_{y=B_s^x} \right.\right.\\
		&\qquad \left.\left. - D_0(s,t,x) \partial_x b(s,y,\varrho_s^{x}(y))\vert_{y=B_s^x} ds \right| \right] \\
		 &\lesssim \EpE{E_n(x) - E_0(x)}{q} + \EpE{D_n(0,t,x) - D_0(0,t,x)}{p} \\
		&\quad + \EpE{\int_0^t  D_n(s,t,x) \partial_x b_n(s,y,\varrho_s^{n,x}(y))\vert_{y=B_s^x} - D_0(s,t,x) \partial_x b(s,y,\varrho_s^{x}(y))\vert_{y=B_s^x} ds}{p}\\
		&=: F_n(x) + G_n(0,t,x) + H_n(t,x),
	\end{align*}
	\end{small}
	where $q:= \frac{2(1+\varepsilon)}{2+\varepsilon}$ and $p:= \frac{1+\varepsilon}{\varepsilon}$. Furthermore, we can bound $H_n(t,x)$ due to \Cref{cor:RegBoundLocalTime}, \eqref{eq:boundPartialXb}, and \eqref{eq:lipPartialXb} by
	\begin{small}
	\begin{align*}
		H_n(t,x) &\leq \int_0^t  \EpE{D_n(s,t,x) - D_0(s,t,x) \right|^p \left| \partial_x b_n(s,y,\varrho_s^{n,x}(y))\vert_{y=B_s^x}}{p} ds \\
		&\qquad + \int_0^t  \EpE{\partial_x b_n(s,y,\varrho_s^{n,x}(y))\vert_{y=B_s^x} - \partial_x b(s,y,\varrho_s^{x}(y))\vert_{y=B_s^x} \right|^p \left| D_0(s,t,x)}{p} ds \\
		&\lesssim \int_0^t G_n(s,t,x) ds + \int_0^t \EpE{\partial_3 b_n(s,B_s^x,\varrho_s^{n,x}(B_s^x))- \partial_3 b(s,B_s^x,\varrho_s^{x}(B_s^x))}{2p} ds \\
		&\qquad + \int_0^t A_n(s,x) ds \\
		&=: \int_0^t G_n(s,t,x) ds + \int_0^t K_n(s,x) ds + \int_0^t A_n(s,x) ds,
	\end{align*}
	\end{small}
	and thus
	\begin{align*}
		A_n(t,x) \leq C \left( F_n(x) + \sup_{s\in[0,t]} G_n(s,t,x) + \sup_{s\in[0,T]} K_n(s,x) \right) + C \int_0^t A_n(s,x) ds,
	\end{align*}
	for some constant $C >0$ independent of $t\in[0,T]$, $n\geq 0$, and $x\in K$. Consequently we get by Grönwall's inequality
	\begin{align*}
		A_n(t,x) \lesssim F_n(x) + G_n(0,t,x) + \sup_{s\in[0,T]} K_n(s,x) + \int_0^t \int_0^t G_n(s,r,x) ds dr.
	\end{align*}
	$F_n$ converges to $0$ uniformly in $x\in K$ by \Cref{cor:RegUniformConvergenceEcal}. Furthermore, we have that $G_n(s,t,x)$ is integrable over $t$ and $s$ by \Cref{cor:RegBoundLocalTime} and converges to $0$ uniformly in $x\in K$ by \Cref{cor:RegUniformConvergenceLocalTime}. Finally, we get due to $b \in \Lcal([0,T] \times \Rbb \times \Rbb)$ that
	\begin{align*}
		K_n(s,x) &\leq \int_0^t \EpE{\partial_3 b_n(s,B_s^x,\varrho_s^{n,x}(B_s^x)) - \partial_3 b_n(s,B_s^x,\varrho_s^x(B_s^x)}{2p} ds \\
		&\qquad + \int_0^t \EpE{\partial_3 b_n(s,B_s^x,\varrho_s^x(B_s^x)) - \partial_3 b(s,B_s^x,\varrho_s^x(B_s^x))}{2p} ds \\
		&\lesssim \int_0^t \left\vert \varrho_s^{n,x}(B_s^x) - \varrho_s^x(B_s^x) \right\vert ds \\
		&\qquad + \int_0^t \EpE{\partial_3 b_n(s,B_s^x,\varrho_s^x(B_s^x)) - \partial_3 b(s,B_s^x,\varrho_s^x(B_s^x))}{2p} ds
	\end{align*}
	Note first that due to \Cref{rem:convergenceRho} we have that $\left\vert \varrho_s^{n,x}(B_s^x) - \varrho_s^x(B_s^x) \right\vert$ converges uniformly in $s\in[0,T]$ and $x \in K$ to $0$ as $n$ goes to infinity. Moreover,
	\begin{align*}
		&\EpE{\partial_3 b_n(s,B_s^x,\varrho_s^x(B_s^x)) - \partial_3 b(s,B_s^x,\varrho_s^x(B_s^x))}{2p} \\
		&\quad = \left( \int_{\Rbb} \left| \partial_3 b_n\left(t,y,\varrho_s^x(y) \right) - \partial_3 b\left(t,y,\varrho_s^x(y) \right) \right|^{2p} \frac{1}{\sqrt{2\pi t}} e^{-\frac{(y-x)^2}{2t}} dy \right)^{\frac{2}{2p}} \\
		&\quad \leq e^{\frac{x^2}{2pt}} \left( \int_{\Rbb} \left| \partial_3 b_n\left(t,y,\varrho_s^x(y) \right) - \partial_3 b\left(t,y,\varrho_s^x(y) \right) \right|^{2p} \frac{1}{\sqrt{2\pi t}} e^{-\frac{y^2}{4t}} dy\right)^{\frac{2}{2p}},
	\end{align*}
	where we have used $e^{-\frac{(y-x)^2}{2t}} = e^{-\frac{y^2}{4t}} e^{-\frac{(y-2x)^2}{4t}} e^{\frac{x^2}{2t}} \leq e^{-\frac{y^2}{4t}} e^{\frac{x^2}{2t}}$. Furthermore, equivalent to \eqref{eq:rhoContinuity} we can find a constant $C>0$ by \Cref{cor:Hölder} such that for all $t\in[0,T]$ and $x,y \in K$
	\begin{align*}
		\left\vert \varrho_s^{x}(z) - \varrho_s^{y}(z) \right\vert \leq C|x-y|.
	\end{align*}
	Consequently the function $x \mapsto \varrho_s^{x}(y)$ is continuous uniformly in $t\in [0,T]$. Thus $\Lambda^K := \lbrace \varrho_s^{x}(y) : x\in K \rbrace \subset \Rbb$ is compact as an image of a compact set under a continuous function. Therefore due to the definition of the approximating sequence
	\begin{align*}
		\sup_{x\in K} \left| \partial_3 b_n(s,y,\varrho_s^{x}(y)) - \partial_3 b(s,y,\varrho_s^{x}(y)) \right| = \sup_{z \in \Lambda^K} \left| \partial_3 b_n(s,y,z) - \partial_3 b(s,y,z) \right| \xrightarrow[n\to\infty]{} 0,
	\end{align*}
	and hence $K_n(s,x)$ converges to $0$ uniformly in $s\in[0,T]$ and $x\in K$.
\end{proof}

We define the weight function $\omega_T: \Rbb \to \Rbb$ by
\begin{align}\label{eq:RegWeightFunction}
	\omega_T(y) := \exp \left\lbrace - \frac{|y|^2}{4T} \right\rbrace, \quad y \in \Rbb.
\end{align}

\begin{theorem}\label{thm:RegMainTheoremDelta}
	Consider $d=1$. Let $(b \diamond \varphi)$ admit a decomposition \eqref{eq:RegFormDrift} and let $b, \varphi \in \Lcal([0,T] \times \Rbb \times \Rbb)$. Further, let $(X_t^x)_{t\in [0,T]}$ be the unique strong solution of mean-field SDE \eqref{lebesgueMFSDE} and $\Phi \in L^{2p}(\Rbb; \omega_T)$, where $p:= \frac{1+\varepsilon}{\varepsilon}$, $\varepsilon>0$ sufficiently small with respect to \Cref{lem:RegBoundsSolution} and $\omega_T: \Rbb \to \Rbb$ as defined in \eqref{eq:RegWeightFunction}. Then
	\begin{align*}
		u(x) := \Ebb \left[ \Phi(X_T^x) \right]
	\end{align*}
	is continuously differentiable in $x \in \Rbb$ and the derivative takes the form
	\begin{align}\label{DeltaMF}
		u'(x) = \Ebb \left[ \Phi(X_T^x) \left( \int_0^T a(s) \partial_x X_s^x + \partial_x b(t,y,\varrho_t^{x}(y))\vert_{y=B_t^x} \int_0^s a(u) du dB_s \right) \right],
	\end{align}
	where $a: \Rbb  \to \Rbb$ is any bounded, measurable function such that
	\begin{align*}
		\int_0^T a(s) ds = 1.
	\end{align*}
\end{theorem}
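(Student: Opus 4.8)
The plan is to prove \eqref{DeltaMF} first for the regular class $\Phi\in\Ccal_b^{1,1}(\Rbb)$, where it is essentially already contained in \Cref{cor:BismutFormulaRecall} together with \Cref{lem:RegRepDelta2}, and then to remove the regularity of $\Phi$ by approximation and a differentiation-under-the-limit argument. For $\Phi\in\Ccal_b^{1,1}(\Rbb)$ I would argue as follows. Such a $\Phi$ is of at most linear growth, hence belongs to $L^{2p}(\Rbb;\omega_T)$, so \Cref{cor:BismutFormulaRecall} identifies the \emph{Sobolev} derivative of $x\mapsto\Ebb[\Phi(X_T^x)]$ with the right-hand side of \eqref{eq:RegDelta}; since $b(s,y,\varrho_s^x(y))=(b\diamond\varphi)(s,y,\Pbb_{X_s^x})$ this is exactly the representation \eqref{DeltaMF}. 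On the other hand \Cref{lem:RegRepDelta2} shows that $u(x):=\Ebb[\Phi(X_T^x)]$ lies in $\Ccal^1(\Rbb)$. Hence for $\Phi\in\Ccal_b^{1,1}(\Rbb)$ the classical derivative $u'$ exists, is continuous, and agrees Lebesgue-a.e.\ with the right-hand side of \eqref{DeltaMF}; as the latter is itself continuous in $x$ (which I would check along the lines of the proof of \Cref{lem:RegRepDelta2}, using \eqref{eq:RegHoelderContinuity}, \eqref{eq:boundDerivative}, and continuity of $x\mapsto\varrho_s^x(\cdot)$), the two coincide everywhere, so \eqref{DeltaMF} holds whenever $\Phi\in\Ccal_b^{1,1}(\Rbb)$.

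For general $\Phi\in L^{2p}(\Rbb;\omega_T)$ I would choose $\Phi_k\in\Ccal_0^{\infty}(\Rbb)\subset\Ccal_b^{1,1}(\Rbb)$ with $\Phi_k\to\Phi$ in $L^{2p}(\Rbb;\omega_T)$, put $u_k(x):=\Ebb[\Phi_k(X_T^x)]$, and let $\Wfrak^x$ denote the stochastic integral appearing in \eqref{DeltaMF}, so that the claimed derivative is $v(x)=\Ebb[\Phi(X_T^x)\Wfrak^x]$ while $u_k'(x)=\Ebb[\Phi_k(X_T^x)\Wfrak^x]=:v_k(x)$ by the first step. It then suffices to show that on every compact $K\subset\Rbb$ one has $u_k\to u$ pointwise and $v_k\to v$ uniformly: the standard theorem on differentiation of uniformly convergent sequences then yields $u\in\Ccal^1(\Rbb)$ with $u'=v$, which is the assertion. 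Both convergences I would obtain by a change of measure. By Girsanov's theorem (as in \Cref{cor:BismutFormulaRecall} and in the proof of \Cref{extensionSolution}) there is a density $\rho_T^x$ with $\sup_{x\in K}\Ebb[(\rho_T^x)^{1+\varepsilon}]<\infty$ by \Cref{lem:RegBoundsSolution} and a random variable $\widetilde{\Wfrak}^x$ (the weight $\Wfrak^x$ with every occurrence of $X_s^x$ and $\partial_xX_s^x$ replaced by the corresponding functional of $B^x$) such that $\Ebb[\Psi(X_T^x)]=\Ebb[\Psi(B_T^x)\rho_T^x]$ and $v_k(x)-v(x)=\Ebb[(\Phi_k-\Phi)(B_T^x)\,\widetilde{\Wfrak}^x\rho_T^x]$. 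Applying H\"older's inequality with exponent $2p$ and peeling off the remaining factors (there is ample room because $p=\tfrac{1+\varepsilon}{\varepsilon}$ with $\varepsilon$ small), and using, for $B_T^x\sim\Ncal(x,T)$,
\begin{align*}
	\Ebb\left[|(\Phi_k-\Phi)(B_T^x)|^{2p}\right]=\int_{\Rbb}|(\Phi_k-\Phi)(y)|^{2p}\,\tfrac{1}{\sqrt{2\pi T}}e^{-\frac{(y-x)^2}{2T}}\,dy\le\tfrac{e^{x^2/(2T)}}{\sqrt{2\pi T}}\int_{\Rbb}|(\Phi_k-\Phi)(y)|^{2p}\,\omega_T(y)\,dy
\end{align*}
(which follows from $e^{-(y-x)^2/(2T)}\le e^{-y^2/(4T)}e^{x^2/(2T)}$), one sees that this factor tends to $0$ uniformly for $x\in K$; the same estimate with $\widetilde{\Wfrak}^x\equiv1$ handles $u_k-u$.

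What then remains, and what I expect to be the main obstacle, is the uniform moment bound $\sup_{x\in K}\Ebb[|\widetilde{\Wfrak}^x|^{a}]<\infty$ for a suitable exponent $a$ (the companion bound on $\rho_T^x$ being \Cref{lem:RegBoundsSolution}). I would establish it exactly as in the proof of \Cref{lem:RegRepDelta2}: boundedness of $a(\cdot)$, the Burkholder--Davis--Gundy inequality applied to $\Wfrak^x$, the uniform estimate \eqref{eq:boundDerivative} on $\sup_{t\in[0,T]}\sup_{x\in K}\Ep{\partial_xX_t^x}{r}$ for every $r\ge1$, the factorization $\partial_xb(s,y,\varrho_s^x(y))=\partial_3b(s,y,\varrho_s^x(y))\,\partial_x\varrho_s^x(y)$ with $\partial_3b$ bounded since $b\in\Lcal([0,T]\times\Rbb\times\Rbb)$ and $\partial_x\varrho_s^x$ controlled by $\Ep{\partial_xX_s^x}{r}$ as in \eqref{eq:rhoContinuity}, together with the local-time and Dol\'eans--Dade bounds used in the proof of \Cref{lem:RegRepDelta2}. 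Collecting everything gives $|u_k(x)-u(x)|+|v_k(x)-v(x)|\le C(K)\,\|\Phi_k-\Phi\|_{L^{2p}(\Rbb;\omega_T)}\to0$ uniformly on $K$, which finishes the proof. Apart from this uniform-in-$x$ $L^a$-control of the (transformed) stochastic-integral weight, the argument is just the routine differentiation-under-the-limit scheme combined with H\"older's inequality and the Gaussian weight $\omega_T$.
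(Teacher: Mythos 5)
Your proposal is correct and follows essentially the same route as the paper: establish \eqref{DeltaMF} for regular $\Phi$ via \Cref{lem:RegRepDelta2} (together with the already available weak-derivative/Bismut--Elworthy--Li representation), then mollify a general $\Phi\in L^{2p}(\Rbb;\omega_T)$, use Girsanov, H\"older with exponent $2p$, the Gaussian estimate $e^{-(y-x)^2/(2T)}\le e^{-y^2/(4T)}e^{x^2/(2T)}$, and the uniform moment bounds on the weight to get uniform convergence of the derivatives on compacts, and conclude by differentiation under the limit. The only differences are cosmetic bookkeeping: the paper applies Cauchy--Schwarz first and Girsanov-transforms only the $\Phi$-factor (so it never needs the transformed stochastic-integral weight you single out as the main obstacle), and for the smooth case it cites the analogue of Theorem 4.2 of \cite{Bauer_StrongSolutionsOfMFSDEs} rather than your a.e.-identification-plus-continuity argument.
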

\begin{proof}
	Due to \Cref{lem:RegRepDelta2} we already know that in the case $\Phi \in \Ccal^{1,1}_b(\Rbb)$ the functional $\EW{\Phi(X_T^x)}$ is continuously differentiable and analogously to \cite[Theorem 4.2]{Bauer_StrongSolutionsOfMFSDEs} it can be shown that representation \eqref{DeltaMF} holds. Now, using mollification we can approximate $\Phi\in L^{2p}(\Rbb; \omega_T)$ by a sequence of smooth functionals $\lbrace \Phi_n \rbrace_{n\geq 1} \subset C_0^{\infty}(\Rbb)$ such that $\Phi_n \to \Phi$ in $L^{2p}(\Rbb; \omega_T)$ as $n\to\infty$. We define 
	\begin{align*}
		u_n(x) &:= \Ebb \left[ \Phi_n(X_T^x) \right]\quad \text{and} \\
		\overline{u}(x) &:= \Ebb \left[ \Phi(X_T^x) \left( \int_0^T a(s) \partial_x X_s^x + \partial_x b(t,y,\varrho_t^{x}(y))\vert_{y=B_t^x} \int_0^s a(u) du dB_s \right) \right].
	\end{align*}		
	Note first that $\overline{u}$ is well-defined. Indeed, due to \eqref{eq:boundDerivative}, \Cref{lem:RegBoundsSolution}, and \eqref{eq:boundPartialXb} we get
	\begin{align}\label{eq:RegWellDefined}
		|\overline{u}(x)| &\leq \Ebb \left[ \Phi(X_T^x)^2 \right]^{\frac{1}{2}} \Ebb \left[ \left( \int_0^T a(s) \partial_x X_s^x + \partial_x b(t,y,\varrho_t^{x}(y))\vert_{y=B_t^x} \int_0^s a(u) du dB_s \right)^2 \right]^{\frac{1}{2}} \notag \\
		&\lesssim \Ebb \left[ \Phi(B_T^x)^2 \Ecal\left( \int_0^T b(u,B_u^x,\rho_u^x)dB_u \right) \right]^{\frac{1}{2}} \sup_{s\in[0,T]} \Ebb \left[ \left( \partial_x X_s^x \right)^2 \right]^{\frac{1}{2}} \\
		&\lesssim \Ebb \left[ \left| \Phi(B_T^x) \right|^{2p} \right]^{\frac{1}{2p}} < \infty. \notag
	\end{align}
	Due to \Cref{lem:RegRepDelta2}, $u_n$ is continuously differentiable for all $n\geq 1$. Thus it remains to show that $u_n'(x)$ converges to $\overline{u}(x)$ compactly in $x$ as $n\to\infty$, where denotes $u_n'$ the first derivative of $u_n$ with respect to $x$. Exactly in the same way as in equation \eqref{eq:RegWellDefined} we can find for any compact subset $K\subset \Rbb$ a constant $C$ such that for every $x\in K$
	\begin{align*}
		|u'(x) - \overline{u}(x)| &\leq C \Ebb \left[ \left| \Phi_n(B_T^x) - \Phi(B_T^x)\right|^{2p}\right]^{\frac{1}{2p}} \\
		&= C \left( \int_{\Rbb} \frac{1}{\sqrt{2 \pi T}} \left| \Phi_n(y) - \Phi(y)\right|^{2p} e^{-\frac{(y-x)^2}{2T}} dy \right)^{\frac{1}{2p}} \\
		&\leq C \left( \frac{e^{\frac{x^2}{2T}}}{\sqrt{2 \pi T}} \int_{\Rbb}  \left| \Phi_n(y) - \Phi(y)\right|^{2p} e^{-\frac{y^2}{4T}} dy \right)^{\frac{1}{2p}} \\
		&= C \left( \frac{e^{\frac{x^2}{2T}}}{\sqrt{2 \pi T}}\right)^{\frac{1}{2p}} \left\Vert \Phi_n - \Phi \right\Vert_{L^{2p}(\Rbb;\omega_T)}, 
	\end{align*}
	where we have used $e^{-\frac{(y-x)^2}{2t}} = e^{-\frac{y^2}{4t}} e^{-\frac{(y-2x)^2}{4t}} e^{\frac{x^2}{2t}} \leq e^{-\frac{y^2}{4t}} e^{\frac{x^2}{2t}}$. Consequently
	\begin{align*}
		\lim_{n\to\infty} \sup_{x\in K} |u_n'(x) - \overline{u}(x)| = 0.
	\end{align*}
	 Thus $u'= \overline{u}$ and $u$ is continuously differentiable.
\end{proof}

%

\appendix
\section{Technical Results}
The first corollary is due to \cite[Proposition 3.8]{Bauer_StrongSolutionsOfMFSDEs}.

\begin{corollary}\label{cor:L2Convergence}
	Consider $d=1$. Let $(b\diamond \varphi)$ admit a decomposition \eqref{eq:RegFormDrift} and $b$ be Lipschitz continuous in the third variable \eqref{lipschitzThirdMF}. Further let $(X_t^{x})_{t\in[0,T]}$ be the unique strong solution of mean-field SDE \eqref{lebesgueMFSDE} and $\lbrace (X_t^{n,x})_{t\in[0,T]} \rbrace_{n\geq 1}$ be the unique strong solutions of \eqref{eq:RegApproximatingMFSDEExp}. Then, for all $t\in [0,T]$ and every $x \in \Rbb$
	\begin{align*}
		\EpE{X_t^{n,x} - X_t^x}{2} \xrightarrow[n\to\infty]{} 0.
	\end{align*}
\end{corollary}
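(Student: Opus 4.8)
The assertion is the transcription of \cite[Proposition 3.8]{Bauer_StrongSolutionsOfMFSDEs} into the present notation, so the plan is to reproduce that argument with the decomposition \eqref{eq:RegFormDrift} and the approximating equations \eqref{eq:RegApproximatingMFSDEExp} in place. Since $X^{n,x}$ and $X^x$ are strong solutions of \eqref{eq:RegApproximatingMFSDEExp} and \eqref{lebesgueMFSDE} driven by the same Brownian motion, I would start from
\[
	X_t^{n,x}-X_t^x=\int_0^t\Big(b_n\big(s,X_s^{n,x},\Theta_s^n(X_s^{n,x})\big)-b\big(s,X_s^x,\Theta_s(X_s^x)\big)\Big)\,ds,
\]
where $\Theta_s^n(y):=\int_\Rbb\varphi_n(s,y,z)\,\Pbb_{X_s^{n,x}}(dz)$ and $\Theta_s(y):=\int_\Rbb\varphi(s,y,z)\,\Pbb_{X_s^x}(dz)$, and aim for a Grönwall estimate $\Ebb\big[|X_t^{n,x}-X_t^x|^2\big]\le\varepsilon_n+C\int_0^t\Ebb\big[|X_s^{n,x}-X_s^x|^2\big]\,ds$ with $C$ independent of $n$ and $\varepsilon_n\to0$. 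A naive Grönwall fails at the bounded measurable block $\bhat_n$ of the drift: it is smooth for each fixed $n$, but its Lipschitz constant in the space variable is not uniform in $n$, so $\Ebb\big[|\bhat_n(s,X_s^{n,x},\cdot)-\bhat_n(s,X_s^x,\cdot)|^2\big]$ cannot be controlled by $C\,\Ebb\big[|X_s^{n,x}-X_s^x|^2\big]$.

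The device for circumventing this, as in \cite{Bauer_StrongSolutionsOfMFSDEs}, is Girsanov's theorem: for any $\Fcal_s$-measurable functional one may replace $X^{n,x}$ (resp.\ $X^x$) by the free Brownian motion $B^x$ at the cost of the Doléan-Dade density $E_n(s):=\Ecal\big(\int_0^s b_n(u,B_u^x,\Theta_u^n(B_u^x))\,dB_u\big)$ (resp.\ $E_0(s)$), and \Cref{lem:RegBoundsSolution} supplies $\sup_{n\ge0}\sup_{s\in[0,T]}\Ebb[E_n(s)^{1+\varepsilon}]<\infty$. Since $b$ is Lipschitz continuous in the third variable \eqref{lipschitzThirdMF}, so are all the mollifications $b_n$ with one common constant. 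Combining the exponential inequality \eqref{eq:RegExponentialInequality}, the Burkholder-Davis-Gundy and Hölder inequalities, one bounds $\sup_{s\le t}\Ebb\big[|E_n(s)-E_0(s)|^q\big]^{1/q}$, for a suitable Hölder exponent $q$, by a constant times $\big(\int_0^t\Ebb\big[\big|b_n(u,B_u^x,\Theta_u^n(B_u^x))-b(u,B_u^x,\Theta_u(B_u^x))\big|^{2p}\big]^{1/p}\,du\big)^{1/2}$; splitting the integrand via the uniform Lipschitz bound into $C|\Theta_u^n(B_u^x)-\Theta_u(B_u^x)|$ plus the coefficient error $|b_n-b|(u,B_u^x,\Theta_u(B_u^x))$, and rewriting $\Theta_u^n(y)-\Theta_u(y)=\Ebb[\varphi_n(u,y,B_u^x)E_n(u)]-\Ebb[\varphi(u,y,B_u^x)E_0(u)]$ through Girsanov once more (using linear growth of $\varphi_n$ and the moment bounds of \Cref{lem:RegBoundsSolution}), leads to a closed self-referential inequality
\[
	\sup_{s\le t}\Ebb\big[|E_n(s)-E_0(s)|^q\big]^{2/q}\ \lesssim\ \varepsilon_n+\int_0^t\sup_{r\le s}\Ebb\big[|E_n(r)-E_0(r)|^q\big]^{2/q}\,ds,
\]
where $\varepsilon_n\to0$ collects the coefficient-approximation errors of $b_n-b$ and $\varphi_n-\varphi$ along $B^x$, vanishing by the a.e.\ convergence $b_n\to b$, $\varphi_n\to\varphi$, the uniform bounds of \Cref{lem:RegBoundsSolution} and dominated convergence on the Gaussian-weighted space. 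Grönwall's inequality then yields $\sup_{s\le t}\Ebb\big[|E_n(s)-E_0(s)|^q\big]^{1/q}\to0$, hence $\sup_{s\le T}\Ebb\big[|\Theta_s^n(B_s^x)-\Theta_s(B_s^x)|\big]\to0$ and $\sup_{s\le T}\Kcal(\Pbb_{X_s^{n,x}},\Pbb_{X_s^x})\to0$.

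With the law-argument errors now under control, I would return to the identity for $X_t^{n,x}-X_t^x$ and split the integrand according to \eqref{eq:RegFormDrift}: the block $\btilde_n$, being Lipschitz continuous in the second variable \eqref{lipschitzSecondMF} with a constant independent of $n$, contributes at most $C\,\Ebb\big[|X_s^{n,x}-X_s^x|^2\big]$ up to an already-vanishing $\Theta^n-\Theta$ term and a $\btilde_n-\btilde$ error, while the bounded measurable block $\bhat_n$ and the residual errors $b_n-b$, $\varphi_n-\varphi$ are transferred one last time via Girsanov onto $B^x$ and estimated through the now-established $L^{1+\varepsilon}$-closeness of $E_n$ and $E_0$ together with dominated convergence. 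This gives $\Ebb\big[|X_t^{n,x}-X_t^x|^2\big]\le\tilde\varepsilon_n+C\int_0^t\Ebb\big[|X_s^{n,x}-X_s^x|^2\big]\,ds$ with $\tilde\varepsilon_n\to0$, and a final Grönwall argument yields $\EpE{X_t^{n,x}-X_t^x}{2}\to0$, which is the claim. The main obstacle throughout is the self-referential structure — the drift of $X^{n,x}$ depends on $\Pbb_{X_s^{n,x}}$, the law of the very process one wishes to control — compounded by the fact that the irregular block $\bhat$ is available only as a mollification and cannot be estimated by a pointwise Lipschitz bound in the space variable; the way around it is never to touch $\bhat$ by such an estimate but to push it onto the free Brownian motion through Girsanov, running the Grönwall loop first on the distance between the Girsanov densities (equivalently on $\sup_s\Kcal(\Pbb_{X_s^{n,x}},\Pbb_{X_s^x})$) and only afterwards on $\Ebb\big[|X_t^{n,x}-X_t^x|^2\big]$ itself.
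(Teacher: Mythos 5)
Your first stage --- controlling $\sup_{s\le T}\Ebb[|E_n(s)-E_0(s)|^q]$, and hence the law/measure arguments $\Theta^n-\Theta$ and $\sup_s\Kcal(\Pbb_{X_s^{n,x}},\Pbb_{X_s^x})$, by a Girsanov--Grönwall loop using the uniform Lipschitz constant of $b_n$ in the third variable and \Cref{lem:RegBoundsSolution} --- is sound and mirrors the technique the paper itself uses in the uniqueness part of \Cref{extensionSolution}. The gap is in the second stage, where you close a Grönwall estimate on $\Ebb[|X_t^{n,x}-X_t^x|^2]$ and claim that the irregular block $\bhat_n$ "is transferred one last time via Girsanov onto $B^x$". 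After inserting an intermediate term, the problematic quantity is of the form
\begin{align*}
	\Ebb\Big[\big|\bhat\big(s,X_s^{n,x},\hat\Theta_s(X_s^{n,x})\big)-\bhat\big(s,X_s^{x},\hat\Theta_s(X_s^{x})\big)\big|^2\Big],
\end{align*}
which couples the two processes pathwise inside a merely measurable bounded function. Girsanov's theorem can remove the drift of one process at a time; there is no change of measure under which $X^{n,x}$ and $X^x$ simultaneously become the same Brownian motion, so this cross term cannot be decoupled onto $B^x$. Nor can it be absorbed into the Grönwall integrand, since $\bhat_n$ has no modulus of continuity in the space variable that is uniform in $n$ (and, with $\hat\varphi$ merely measurable, neither does the composed map $y\mapsto\bhat(s,y,\hat\Theta_s(y))$); and it does not vanish merely because the laws are close, since it depends on the joint pathwise behaviour. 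The same issue leaks into your treatment of the $\btilde$ block whenever you write $\Theta^n$ without separating the Lipschitz part $\tilde\varphi$ from the irregular part $\hat\varphi$.

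This is precisely the point where the cited result (\cite[Proposition 3.8]{Bauer_StrongSolutionsOfMFSDEs}, which the paper invokes without reproducing a proof) does something different: once the measure argument is under control, the convergence $X_t^{n,x}\to X_t^x$ in $L^2(\Omega)$ for the resulting SDEs with irregular (frozen-law) drift is obtained by a compactness/weak-convergence argument --- uniform bounds on the Malliavin derivatives of $X_t^{n,x}$ yield relative compactness of the sequence in $L^2(\Omega)$ via a Malliavin-based compactness criterion, and the limit is identified with $X_t^x$ through the Wiener transform (the same weak-convergence device used in the existence part of \Cref{extensionSolution}) --- rather than by a pathwise Grönwall estimate. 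Without replacing your second stage by an argument of this type (or by some other tool capable of handling a bounded, merely measurable drift, e.g.\ a Zvonkin-type transform), the proposed proof does not go through.
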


The upcoming lemma is an extension of \cite[Lemma A.4]{Bauer_StrongSolutionsOfMFSDEs} to multi dimensions.

\begin{lemma}\label{lem:RegBoundsSolution}
	Let $b,\varphi:[0,T] \times \Rbb^d \times \Rbb^d \to \Rbb^d$ be two measurable functions satisfying the linear growth condition \eqref{linearGrowthMF}. Furthermore, let $(\Omega, \Fcal, \Fbb, \Pbb, B, X^x)$ be a weak solution of mean-field SDE \eqref{lebesgueMFSDE}. Then,
	\begin{align*}
		\left\Vert b\left(t, X_t^x, \int_{\Rbb^d} \varphi(t,X_t^x,z) \Pbb_{X_t^x}(dz)\right) \right\Vert \leq C \left(1+\Vert x \Vert + \sup_{s\in[0,T]} \Vert B_s \Vert \right)
	\end{align*}
	for some constant $C>0$. Consequently, for any compact set $K\subset \Rbb^d$, and $1\leq p < \infty$, there exist $\varepsilon >0$ and a constant $C>0$ such that the following boundaries hold:
	\begin{align*}
		\sup_{x\in K} &~\Ebb \left[\sup_{t\in[0,T]} \left\Vert b\left(t,X_t^x,\int_{\Rbb^d} \varphi(t,X_t^x,z) \Pbb_{X_t^x}(dz)\right)\right\Vert^p\right] < \infty\\
		\sup_{t\in[0,T]} \sup_{x\in K} &~\Ebb \left[\Vert X_t^x\Vert^p\right] \leq C(1+\Vert x\Vert^p) < \infty \\
		\sup_{x\in K} &~\Ebb \left[ \Ecal \left( \int_0^T b\left(u,B_u^x, \int_{\Rbb^d} \varphi(u,B_u^x,z) \Pbb_{X_u^x}(dz)\right) dB_u \right)^{1+\varepsilon} \right] < \infty
	\end{align*}
\end{lemma}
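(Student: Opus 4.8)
The plan is to follow the one–dimensional template of \cite[Lemma A.4]{Bauer_StrongSolutionsOfMFSDEs}, the passage to $\Rbb^d$ being essentially notational (euclidean norms in place of absolute values, and the elementary Gaussian moment and tail bounds for $\Rbb^d$-valued Brownian motion). Write $w_t := \int_{\Rbb^d}\varphi(t,X_t^x,z)\Pbb_{X_t^x}(dz)$. The starting point is a purely algebraic drift estimate: the linear growth \eqref{linearGrowthMF} of $\varphi$ gives $\Vert w_t\Vert \le \int_{\Rbb^d} C(1+\Vert X_t^x\Vert+\Vert z\Vert)\Pbb_{X_t^x}(dz) = C(1+\Vert X_t^x\Vert+\Ebb[\Vert X_t^x\Vert])$, and feeding this into the linear growth of $b$ yields $\Vert b(t,X_t^x,w_t)\Vert \le C(1+\Vert X_t^x\Vert+\Ebb[\Vert X_t^x\Vert])$ for a constant depending only on the growth constants.

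Next I would control $\Ebb[\Vert X_t^x\Vert]$ uniformly in $t$. Since $X^x$ solves \eqref{lebesgueMFSDE} with additive noise, $\Vert X_t^x\Vert \le \Vert x\Vert + \sup_{s\le T}\Vert B_s\Vert + \int_0^t\Vert b(s,X_s^x,w_s)\Vert\,ds$; taking expectations and using the drift estimate gives $\Ebb[\Vert X_t^x\Vert]\le \Vert x\Vert+\Ebb[\sup_{s\le T}\Vert B_s\Vert]+Ct+2C\int_0^t\Ebb[\Vert X_s^x\Vert]\,ds$, whence a Gronwall argument — preceded by a routine localization along $\tau_N:=\inf\{t:\Vert X_t^x\Vert\ge N\}$ to make the quantities a priori finite, which is legitimate since $t\mapsto\Ebb[\Vert X_{t\wedge\tau_N}^x\Vert]$ is finite and the McKean--Vlasov term $\Ebb[\Vert X_s^x\Vert]$ is finite for every $s$ by the $\Pcal_1(\Rbb^d)$-valuedness of the law — yields $\sup_{t\le T}\Ebb[\Vert X_t^x\Vert]\le C(1+\Vert x\Vert)$. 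Substituting this back into the drift estimate gives $\Vert b(t,X_t^x,w_t)\Vert\le C(1+\Vert x\Vert+\Vert X_t^x\Vert)$, and then the pathwise inequality for $\Vert X_t^x\Vert$ above, together with a second, now pathwise, Gronwall argument (applicable because $t\mapsto \Vert b(t,X_t^x,w_t)\Vert$ is a.s. bounded on $[0,T]$ by continuity of $X^x$), produces the asserted pointwise bound $\Vert b(t,X_t^x,w_t)\Vert\le C(1+\Vert x\Vert+\sup_{s\le T}\Vert B_s\Vert)$.

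The first two moment bounds then follow immediately: raising the pointwise bound to the $p$-th power and using $\Ebb[\sup_{s\le T}\Vert B_s\Vert^p]<\infty$ (Doob's inequality and Gaussian moments) gives the bound on $\sup_t\Vert b(\cdot)\Vert^p$, and since $\Vert X_t^x\Vert\le \Vert x\Vert+\sup_{s\le T}\Vert B_s\Vert+T\cdot C(1+\Vert x\Vert+\sup_{s\le T}\Vert B_s\Vert)\lesssim 1+\Vert x\Vert+\sup_{s\le T}\Vert B_s\Vert$ we obtain $\sup_{t\le T}\Ebb[\Vert X_t^x\Vert^p]\le C(1+\Vert x\Vert^p)$; the supremum over $x$ in the compact set $K$ is harmless in both cases since $x$ enters only through $\max_{x\in K}\Vert x\Vert^p$.

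The delicate point is the exponential moment of the Doléans--Dade exponential. Set $\psi_u:=b\bigl(u,B_u^x,\int_{\Rbb^d}\varphi(u,B_u^x,z)\Pbb_{X_u^x}(dz)\bigr)$ — the drift evaluated along the Brownian path $B^x=x+B$, with the (deterministic) solution law inside — let $M_t:=\int_0^t\psi_u\,dB_u$ and $V_T:=\int_0^T\Vert\psi_u\Vert^2\,du$. The same algebraic estimate as above, together with the already-established bound $\Ebb[\Vert X_u^x\Vert]\le C(1+\Vert x\Vert)$, gives $\Vert\psi_u\Vert\le C(1+\Vert x\Vert+\Vert B_u\Vert)$, hence $V_T\le C(1+\Vert x\Vert^2)+C\int_0^T\Vert B_u\Vert^2\,du$. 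The right–hand side is (a constant plus) a Gaussian quadratic functional of $B$, for which $\Ebb[\exp\{\lambda\int_0^T\Vert B_u\Vert^2\,du\}]<\infty$ whenever $\lambda$ lies below an explicit threshold depending only on $T$ and $d$. One then writes, for $r>1$, $\Ecal(M)_T^{1+\varepsilon}=\bigl[\Ecal\bigl(r(1+\varepsilon)M\bigr)_T\bigr]^{1/r}\exp\bigl\{\tfrac{(1+\varepsilon)(r(1+\varepsilon)-1)}{2}V_T\bigr\}$, applies Hölder's inequality with exponents $r$ and $\tfrac{r}{r-1}$, and uses that the non-negative local martingale $\Ecal(r(1+\varepsilon)M)$ is a supermartingale with expectation at most $1$; what remains is $\Ebb[\exp\{\tfrac{r}{r-1}\tfrac{(1+\varepsilon)(r(1+\varepsilon)-1)}{2}V_T\}]^{(r-1)/r}$, which is finite once $\varepsilon$ (and with it the constant multiplying $V_T$, via a suitable choice of $r$) is small enough to keep that constant below the threshold for the Gaussian quadratic functional. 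This smallness requirement is exactly the ``$\varepsilon$ sufficiently small with respect to \Cref{lem:RegBoundsSolution}'' invoked throughout the paper, and balancing it against the Gaussian threshold — if necessary after first subdividing $[0,T]$ into sufficiently short subintervals and iterating via the Markov property, the standard device when the growth constants are large relative to $T$ — is the main obstacle; uniformity over $x\in K$ is again immediate, the dependence on $x$ entering only through the finite constant $\sup_{x\in K}e^{c\Vert x\Vert^2}$.
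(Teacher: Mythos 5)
Your proposal is correct and follows essentially the same route as the paper, which proves this lemma simply by pointing to the one-dimensional Lemma A.4 of \cite{Bauer_StrongSolutionsOfMFSDEs} and noting the extension to $\Rbb^d$ is notational: linear growth of $\varphi$ and the first-moment bound on the law give the drift estimate, Gr\"onwall (in mean, then pathwise) gives the pointwise and $L^p$ bounds, and the Dol\'eans--Dade moment is obtained by the same H\"older/supermartingale manipulation with $\varepsilon$ chosen small against the Gaussian exponential-moment threshold. Your honest flagging of the only delicate point — that for large growth constants or large $T$ the smallness of $\varepsilon$ (together with a subdivision/iteration of $[0,T]$) must be tuned so the Gaussian quadratic functional stays integrable — is exactly where the cited proof also does its work, so there is no substantive deviation.
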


In the following results which are due to \cite[Lemma A.5, Lemma A.6 \& Lemma A.7]{Bauer_StrongSolutionsOfMFSDEs} we use the notation $\bfr_n(t,y) = b_n\left( t,y, \int_\Rbb \varphi(t,y,z) \Pbb_{X_t^{n,x}}(dz) \right)$.

\begin{corollary}\label{cor:RegBoundLocalTime}
	Consider $d=1$. Suppose $(b \diamond \varphi)$ admits a decomposition \eqref{eq:RegFormDrift} and that $b$ and $\varphi$ are Lipschitz continuous in the third variable \eqref{lipschitzThirdMF}. Let $(X_t^x)_{t\in [0,T]}$ be the unique strong solution of mean-field SDE \eqref{lebesgueMFSDE}. Moreover, $\lbrace b_n \rbrace_{n\geq 1}$ is the approximating sequence of $b$ as defined in \eqref{eq:RegApproximatingDrift} and $(X_t^{n,x})_{t\in[0,T]}$, $n\geq 1$, the corresponding unique strong solutions of \eqref{eq:RegApproximatingMFSDEExp}. Then, for all $\lambda \in \Rbb$ and any compact subset $K \subset \Rbb$,
	\begin{align*}
		\sup_{n\geq 0} \sup_{s,t\in[0,T]} \sup_{x\in K} \Ebb \left[ \exp \left\lbrace - \lambda \int_s^t \int_{\Rbb} \bfr_n\left(s,y \right) L^{B^x}(ds,dy) \right\rbrace \right] < \infty.
	\end{align*}
\end{corollary}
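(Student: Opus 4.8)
The plan is to reduce the space--time local time integral to an ordinary Lebesgue integral along the Brownian path via the occupation times formula, and then to control the resulting exponential by a Gaussian moment estimate; the only preparatory step is to check that $\bfr_n$ is of linear growth in $y$, uniformly in $n\ge0$ and $x\in K$. To this end, note that by the construction of the approximating sequences in \eqref{eq:RegApproximatingDrift} there is a constant $C>0$ independent of $n$ with $|b_n(t,y,z)|\le C(1+|y|+|z|)$ for all $n\ge0$ and all $(t,y,z)$ (the uniformly bounded ``$\tilde{\cdot}$''-part contributes a constant, the ``$\hat{\cdot}$''-part the linear growth; the same holds for $n=0$ since $\hat b$ is bounded and $\btilde$ is of linear growth), while $\varphi$ is of linear growth \eqref{linearGrowthMF} by virtue of the decomposition \eqref{eq:RegFormDrift}. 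Applying \Cref{lem:RegBoundsSolution} to each $X^{n,x}$, with constants depending only on the common linear growth constant of $b_n,\varphi_n$ and on $T$, yields $M_K:=\sup_{n\ge0}\sup_{t\in[0,T]}\sup_{x\in K}\Ebb[|X_t^{n,x}|]<\infty$, hence
\[
	|\varrho_t^{n,x}(y)|=\Big|\int_\Rbb\varphi(t,y,z)\,\Pbb_{X_t^{n,x}}(dz)\Big|\le C\bigl(1+|y|+M_K\bigr),
\]
and therefore $|\bfr_n(t,y)|=|b_n(t,y,\varrho_t^{n,x}(y))|\le C_K(1+|y|)$ for all $n\ge0$, $t\in[0,T]$ and $x\in K$, with $C_K$ depending only on $K$ and the fixed constants.

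Since $B^x$ has deterministic quadratic variation $\langle B^x\rangle_u=u$, the occupation times formula applied to the jointly measurable integrand $\bfr_n$ (extended to linear growth integrands by truncation) gives
\[
	\int_s^t\int_\Rbb\bfr_n(u,y)\,L^{B^x}(du,dy)=\int_s^t\bfr_n(u,B_u^x)\,du,
\]
and hence $\bigl|\int_s^t\int_\Rbb\bfr_n(u,y)\,L^{B^x}(du,dy)\bigr|\le C_KT\bigl(1+\sup_{u\in[0,T]}|B_u^x|\bigr)$. Consequently, for every $\lambda\in\Rbb$,
\[
	\Ebb\Bigl[\exp\Bigl\{-\lambda\int_s^t\int_\Rbb\bfr_n(u,y)\,L^{B^x}(du,dy)\Bigr\}\Bigr]\le e^{|\lambda|C_KT}\,\Ebb\Bigl[\exp\Bigl\{|\lambda|C_KT\sup_{u\in[0,T]}|B_u^x|\Bigr\}\Bigr].
\]
Using $\sup_{u\in[0,T]}|B_u^x|\le\sup_{x\in K}|x|+\sup_{u\in[0,T]}|B_u|$ together with the Gaussian tail estimate $\Ebb[\exp\{c\sup_{u\in[0,T]}|B_u|\}]<\infty$ for all $c>0$ (reflection principle), the right-hand side is finite and depends only on $\lambda$, $K$ and $T$; in particular it is independent of $n$, $s$ and $t$. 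This is the asserted bound, and it coincides with the combination of \cite[Lemma A.5, Lemma A.6 \& Lemma A.7]{Bauer_StrongSolutionsOfMFSDEs} specialised to the present integrand.

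I do not expect a genuine obstacle here; the one point that needs care is the uniformity in $n$, which rests entirely on the fact that the approximating sequences $\lbrace b_n\rbrace$ and $\lbrace\varphi_n\rbrace$ can be, and are, chosen with a common linear growth constant (as recorded in \eqref{eq:RegApproximatingDrift}), so that both the moment bound for $X^{n,x}$ from \Cref{lem:RegBoundsSolution} and the growth bound for $\bfr_n$ are uniform in $n$. If one prefers to keep the decomposition $\bfr_n=\hat{\bfr}_n+\tilde{\bfr}_n$ explicit rather than merging it into a single linear growth bound, one splits the exponential by the Cauchy--Schwarz inequality, estimates the uniformly bounded part trivially by $e^{|\lambda|\,\Vert\tilde b_n\Vert_\infty T}\le e^{|\lambda|CT}$, and treats the linear growth part exactly as above; this introduces no new ideas.
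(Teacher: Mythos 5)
There is a genuine gap, and it sits at the heart of your argument: the identity
\begin{align*}
\int_s^t\int_\Rbb\bfr_n(u,y)\,L^{B^x}(du,dy)=\int_s^t\bfr_n(u,B_u^x)\,du
\end{align*}
is not what the occupation times formula gives, and it is false for the object used here. The occupation time formula relates $\int_s^t g(u,B_u^x)\,du$ to $\int_\Rbb\int_s^t g(u,y)\,dL_u^y\,dy$, i.e. local time at a fixed level integrated in time and then integrated in space against \emph{Lebesgue} measure. The symbol $L^{B^x}(du,dy)$ in the statement is a different object: it is the local time--space integral in the sense of Eisenbaum (see \cite{Eisenbaum_LocalTimeIntegral} and \cite{MeyerBrandisBanosDuedahlProske_ComputingDeltas}), which for integrands that are $C^1$ in space satisfies $\int_s^t\int_\Rbb f(u,y)\,L^{B^x}(du,dy)=-\int_s^t\partial_y f(u,B_u^x)\,du$. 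You can see this convention inside the paper itself: representation \eqref{eq:derivativeMalliavin} has the exponent $-\int_s^t\int_\Rbb b\,L^{X^x}(du,dy)$, and in the smooth case this is rewritten as $+\int_s^t\partial_2 b+\dots\,du$; with your identification the two displays would be inconsistent. Consequently, linear growth of $\bfr_n$ (which you establish correctly and which is indeed uniform in $n$) gives no control whatsoever over the exponent: what enters is effectively the spatial derivative $\partial_y\bfr_n$, and $\|\partial_y b_n\|_\infty$ blows up as $n\to\infty$ precisely because $b$ is irregular. Your concluding Gaussian bound therefore estimates the wrong quantity, and the claimed uniformity in $n$ does not follow.

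This is also why the statement is not a soft estimate: the entire difficulty the cited results \cite[Lemma A.5--A.7]{Bauer_StrongSolutionsOfMFSDEs} (on which the paper relies, without reproving them) are designed to handle is to obtain exponential moments of $\int_s^t\int_\Rbb\bfr_n(u,y)\,L^{B^x}(du,dy)$ \emph{uniformly in $n$} although the spatial derivatives of $b_n$ are not uniformly bounded. The proof there does not go through a pathwise bound; it uses the decomposition of the local time--space integral into forward and backward It\^{o} integrals over the Brownian path together with Girsanov-type arguments and explicit Gaussian computations, exploiting only the uniform sup-norm bound on the bounded irregular part $\bhat_n$ and the uniform Lipschitz/linear-growth constants of the regular part $\btilde_n$ from \eqref{eq:RegApproximatingDrift} and decomposition \eqref{eq:RegFormDrift}. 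If you want a self-contained proof, that is the machinery you would need to reproduce; the reduction to $\int_s^t\bfr_n(u,B_u^x)\,du$ cannot be repaired.
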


\begin{corollary}\label{cor:RegUniformConvergenceEcal}
	Consider $d=1$. Suppose $(b \diamond \varphi)$ admits a decomposition \eqref{eq:RegFormDrift} and that $b$ and $\varphi$ are Lipschitz continuous in the third variable \eqref{lipschitzThirdMF}. Let $(X_t^x)_{t\in[0,T]}$ be the unique strong solution of mean-field SDE \eqref{lebesgueMFSDE}. Furthermore, $\lbrace b_n \rbrace_{n\geq 1}$ is the approximating sequence of $b$ as defined in \eqref{eq:RegApproximatingDrift} and $(X_t^{n,x})_{t\in[0,T]}$, $n\geq 1$, the corresponding unique strong solutions of \eqref{eq:RegApproximatingMFSDEExp}. Then for any compact subset $K \subset \Rbb$ and $q:= \frac{2(1+\varepsilon)}{2+\varepsilon}$, $\varepsilon>0$ sufficiently small with regard to \Cref{lem:RegBoundsSolution},
	\begin{align*}
		\sup_{x\in K} \Ebb \left[ \left| \Ecal \left( \int_0^T \bfr_n(t,B_t^x) dB_t \right) - \Ecal \left( \int_0^T \bfr(t,B_t^x) dB_t \right) \right|^q \right]^{\frac{1}{q}} \xrightarrow[n\to\infty]{} 0.
	\end{align*}
\end{corollary}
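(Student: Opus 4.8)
The plan is to reduce the statement, via the exponential/Girsanov manipulations already used in the proof of \Cref{extensionSolution} and of \Cref{lem:RegRepDelta2}, to an $L^r$-convergence of the integrands, and then to establish that convergence by dominated convergence.

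\textbf{Reduction.} Fix $x\in K$ and, since $d=1$, write $a_n:=\int_0^T\bfr_n(t,B_t^x)\,dB_t-\tfrac12\int_0^T\bfr_n(t,B_t^x)^2\,dt$, so that $\Ecal\big(\int_0^T\bfr_n(t,B_t^x)\,dB_t\big)=e^{a_n}$ and $a_0=a$. Inequality \eqref{eq:RegExponentialInequality} gives $|e^{a_n}-e^{a}|\le|a_n-a|\,(e^{a_n}+e^{a})$. Applying Hölder's inequality with the exponents matched so that $e^{a_n}+e^{a}$ is measured in $L^{1+\varepsilon}(\Omega)$ — where it is bounded uniformly in $n\ge0$ and $x\in K$ by the argument underlying \Cref{lem:RegBoundsSolution} applied to the deterministic integrands $\bfr_n$, which are of linear growth in $y$ uniformly in $n$ (using the uniform linear growth of $\bhat_n,\hat\varphi_n$ and the uniform moment bounds $\sup_{n,t,x\in K}\Ebb[|X_t^{n,x}|]<\infty$) — one is left with the factor $\Ebb\big[|a_n-a|^{2p}\big]^{1/(2p)}$, $p=\tfrac{1+\varepsilon}{\varepsilon}$. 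Splitting $a_n-a=\int_0^T(\bfr_n-\bfr)(t,B_t^x)\,dB_t-\tfrac12\int_0^T(\bfr_n^2-\bfr^2)(t,B_t^x)\,dt$, applying Burkholder--Davis--Gundy's inequality to the martingale part, Cauchy--Schwarz in time together with the uniform linear growth of $\bfr_n$ to the bounded-variation part, and then Minkowski's integral inequality throughout, the whole expression is dominated by a constant independent of $n$ and of $x\in K$ times $\big(\int_0^T\Ebb\big[|\bfr_n(t,B_t^x)-\bfr(t,B_t^x)|^{r}\big]^{1/r}\,dt\big)^{1/2}$ for a suitable exponent $r\ge1$ (one may take $r=8p$). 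Hence it suffices to prove
\[ \sup_{x\in K}\int_0^T\Ebb\big[|\bfr_n(t,B_t^x)-\bfr(t,B_t^x)|^{r}\big]\,dt\xrightarrow[n\to\infty]{}0. \]

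\textbf{Convergence of the integrand.} Set $\varrho_t^{n,x}(y):=\int_\Rbb\varphi(t,y,z)\,\Pbb_{X_t^{n,x}}(dz)$, so that $\bfr_n(t,y)=b_n(t,y,\varrho_t^{n,x}(y))$ and $\bfr(t,y)=b(t,y,\varrho_t^{x}(y))$, and decompose $\bfr_n(t,y)-\bfr(t,y)=\big(b_n(t,y,\varrho_t^{n,x}(y))-b_n(t,y,\varrho_t^{x}(y))\big)+\big(b_n(t,y,\varrho_t^{x}(y))-b(t,y,\varrho_t^{x}(y))\big)$. Since mollification preserves the Lipschitz constant, each $b_n$ is Lipschitz in the third variable with a constant independent of $n$, so the first bracket is at most $C|\varrho_t^{n,x}(y)-\varrho_t^{x}(y)|$; and as $\varphi$ is Lipschitz in the third variable, $|\varrho_t^{n,x}(y)-\varrho_t^{x}(y)|\le C\,\Ebb[|X_t^{n,x}-X_t^x|^2]^{1/2}$, uniformly in $y\in\Rbb$, which tends to $0$ by \Cref{cor:L2Convergence} together with its uniform-in-$(t,x\in K)$ version recorded in \Cref{rem:convergenceRho}; thus the first bracket contributes nothing to the limit.

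\textbf{The main obstacle.} The second bracket is the crux: $b_n\to b$ holds only a.e.\ on $[0,T]\times\Rbb\times\Rbb$, whereas $b_n-b$ must be evaluated along the graph $z=\varrho_t^x(y)$, a Lebesgue-null set. This will be resolved by exploiting that $b$ is uniformly Lipschitz — in particular continuous — in the third variable, so that for a.e.\ $(t,y)$ the mollifications $b_n(t,y,\cdot)$ converge to $b(t,y,\cdot)$ locally uniformly. For $|y|\le R$ the set $\{\varrho_t^x(y):t\in[0,T],\,x\in K\}$ lies in some bounded interval $[-C_R,C_R]$ (by linear growth of $\varphi$ and $\sup_{t,x\in K}\Ebb[|X_t^x|]<\infty$ from \Cref{lem:RegBoundsSolution}), whence $\sup_{x\in K}|b_n(t,y,\varrho_t^x(y))-b(t,y,\varrho_t^x(y))|\le\sup_{|z|\le C_R}|b_n(t,y,z)-b(t,y,z)|\to0$ for a.e.\ such $(t,y)$. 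Writing the expectation through the Gaussian density of $B_t^x$ and splitting the $y$-integral at $|y|=R$, the tail $|y|>R$ is bounded, uniformly in $n$ and $x\in K$, by $\int_0^T\int_{|y|>R}C(1+|y|)^r\sup_{x\in K}\tfrac{1}{\sqrt{2\pi t}}e^{-(y-x)^2/(2t)}\,dy\,dt$ (using the uniform linear growth of $b_n$), which is $<\eta$ for $R$ large since this kernel is integrable over $[0,T]\times\Rbb$; for fixed $R$ the part $|y|\le R$ tends to $0$ as $n\to\infty$ by dominated convergence. Letting first $R\to\infty$ and then $n\to\infty$ yields the displayed convergence, and with it the corollary; the integrable singularity of the heat kernel at $t=0$ is harmless because $\int_0^T\sup_{x\in K}\Ebb[(1+|B_t^x|)^r]\,dt<\infty$. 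I expect this transfer of a.e.\ convergence to convergence along the curve $z=\varrho_t^x(y)$, uniformly in $x\in K$, to be the only genuinely delicate step.
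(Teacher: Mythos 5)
The paper does not actually prove this corollary: it is quoted verbatim from the companion paper (\cite[Lemma A.6]{Bauer_StrongSolutionsOfMFSDEs}), so there is no in-text argument to compare with line by line. Your reconstruction uses exactly the toolkit the paper deploys elsewhere (inequality \eqref{eq:RegExponentialInequality}, the H\"older exponent bookkeeping $q=\tfrac{2(1+\varepsilon)}{2+\varepsilon}$, $2p=\tfrac{2(1+\varepsilon)}{\varepsilon}$, Burkholder--Davis--Gundy and Minkowski, the uniform exponential-moment bound of \Cref{lem:RegBoundsSolution}, and the compact-range trick for transferring a.e.\ convergence of the mollifications onto the curve $z=\varrho_t^x(y)$, which mirrors the $\Lambda^K$ argument in the proof of \Cref{lem:RegRepDelta2}). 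The reduction step and the treatment of the second bracket $b_n(t,y,\varrho_t^x(y))-b(t,y,\varrho_t^x(y))$ are sound: equi-Lipschitzness in $z$ upgrades a.e.\ convergence to locally uniform convergence in $z$ for a.e.\ $(t,y)$, and your $R$-truncation with the Gaussian kernel handles the integration correctly, with the supremum over $x\in K$ absorbed into the compact interval $[-C_R,C_R]$.

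The genuine gap is the treatment of the first bracket. You bound it by $C\,\Ebb[|X_t^{n,x}-X_t^x|]$ and then claim the required convergence, \emph{uniformly in $t\in[0,T]$ and $x\in K$}, by invoking \Cref{rem:convergenceRho}. But within this paper's logical structure \Cref{rem:convergenceRho} is stated as a consequence of the very corollary you are proving (\Cref{cor:RegUniformConvergenceEcal}), so this is circular. \Cref{cor:L2Convergence} only gives pointwise convergence in $(t,x)$, and since the corollary asserts a supremum over $x\in K$, some independent uniformity argument is indispensable. The gap is fixable: for instance, note that $x\mapsto\Ebb[|X_t^{n,x}-X_t^x|]$ is Lipschitz on $K$ uniformly in $n$ and $t$ (an $n$-uniform version of estimate \eqref{eq:RegHoelderContinuity}, whose constant depends only on the common linear-growth/Lipschitz data of the $b_n,\varphi$), so pointwise convergence plus equicontinuity yields uniform convergence on the compact $K$, and dominated convergence in $t$ finishes the step; alternatively one can estimate $|\varrho_t^{n,x}(y)-\varrho_t^x(y)|$ directly through the Girsanov/Gr\"onwall machinery of the uniqueness part of \Cref{extensionSolution}, which produces bounds that are manifestly uniform in $x\in K$. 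As written, however, the uniform-in-$x$ control of the first bracket rests on a statement that presupposes the corollary itself.
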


\begin{remark}\label{rem:convergenceRho}
	Note that due to \Cref{cor:RegUniformConvergenceEcal} it is readily seen that for any $\psi \in \Lip(\Rbb)$
	\begin{align*}
		\EW{\psi(X_t^{n,x})} \xrightarrow[n\to\infty]{} \EW{\psi(X_t^x)}
	\end{align*}
	uniformly in $t\in[0,T]$ and $x\in K$, where $K\subset \Rbb$ is a compact subset.
\end{remark}

\begin{corollary}\label{cor:RegUniformConvergenceLocalTime}
	Consider $d=1$. Suppose $(b \diamond \varphi)$ admits a decomposition \eqref{eq:RegFormDrift} and that $b$ and $\varphi$ are Lipschitz continuous in the third variable \eqref{lipschitzThirdMF}. Let $(X_t^x)_{t\in[0,T]}$ be the unique strong solution of mean-field SDE \eqref{lebesgueMFSDE}. Furthermore, $\lbrace b_n \rbrace_{n\geq 1}$ is the approximating sequence of $b$ as defined in \eqref{eq:RegApproximatingDrift} and $(X_t^{n,x})_{t\in[0,T]}$, $n\geq 1$, the corresponding unique strong solutions of \eqref{eq:RegApproximatingMFSDEExp}. Then for any compact subset $K \subset \Rbb$, $0 \leq s \leq t \leq T$ and $p\geq 1$,	
	\begin{scriptsize}
	\begin{align*}
		\sup_{x\in K} \Ebb \left[ \left| \exp \left\lbrace - \int_s^t \int_{\Rbb} \bfr_n(u,y) L^{B^x}(du,dy) \right\rbrace - \exp \left\lbrace - \int_s^t \int_{\Rbb} \bfr(u,y) L^{B^x}(du,dy) \right\rbrace \right|^p \right]^{\frac{1}{p}} \xrightarrow[n \to \infty]{} 0.
	\end{align*}
	\end{scriptsize}
\end{corollary}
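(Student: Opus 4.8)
The plan is to linearize the exponentials and reduce to an $L^{2p}(\Omega)$-estimate on the local-time integral of the drift difference. Write $J_n^x := -\int_s^t\int_\Rbb \bfr_n(u,y)\,L^{B^x}(du,dy)$ for $n\ge 0$, with $\bfr := \bfr_0$ and $J^x := J_0^x$. By the elementary inequality \eqref{eq:RegExponentialInequality} one has $\big|e^{J_n^x}-e^{J^x}\big| \le \big|J_n^x-J^x\big|\big(e^{J_n^x}+e^{J^x}\big)$, so that raising to the $p$-th power and applying H\"older's inequality with exponents $2$ and $2$ gives
\begin{align*}
  \Ebb\big[\big|e^{J_n^x}-e^{J^x}\big|^p\big]^{1/p} \le \Ebb\big[\big|J_n^x-J^x\big|^{2p}\big]^{1/(2p)}\,\Ebb\big[\big(e^{J_n^x}+e^{J^x}\big)^{2p}\big]^{1/(2p)}.
\end{align*}
By \Cref{cor:RegBoundLocalTime} (applied with $\lambda=2p$) the second factor is bounded by a constant independent of $n\ge 0$ and $x\in K$, so it remains to prove
\begin{align*}
  \sup_{x\in K}\Ebb\Big[\Big|\int_s^t\int_\Rbb(\bfr_n-\bfr)(u,y)\,L^{B^x}(du,dy)\Big|^{2p}\Big]^{1/(2p)}\xrightarrow[n\to\infty]{}0 .
\end{align*}

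Next I would decompose the integrand as $\bfr_n-\bfr = \big(\bfr_n-b_n(\cdot,\cdot,\varrho^x_{\cdot}(\cdot))\big)+\big(b_n(\cdot,\cdot,\varrho^x_{\cdot}(\cdot))-\bfr\big)$, i.e.\ first replace the law $\Pbb_{X_u^{n,x}}$ occurring in $\bfr_n$ by $\Pbb_{X_u^x}$, and then replace $b_n$ by $b$; here $\varrho_u^x(y)=\int_\Rbb\varphi(u,y,z)\,\Pbb_{X_u^x}(dz)$ as in the proof of \Cref{lem:RegRepDelta2}. Since $z\mapsto b_n(u,y,z)$ is $C$-Lipschitz with $C$ independent of $n$ (because $b_n\in\Lcal([0,T]\times\Rbb\times\Rbb)$ with a uniform constant), the first term is dominated by $C\,|\varrho_u^{n,x}(y)-\varrho_u^x(y)|$, and this bound tends to $0$ uniformly in $(u,y)$ and $x\in K$: applying \Cref{rem:convergenceRho} to the function $\varphi(u,y,\cdot)\in\Lip_C(\Rbb)$, and noting that in the Girsanov argument underlying \Cref{rem:convergenceRho} the dominating random variable $C(1+|B_u^x|)$ does not depend on $(u,y)$, yields $\varepsilon_n:=\sup_{u\in[0,T]}\sup_{y\in\Rbb}\sup_{x\in K}|\varrho_u^{n,x}(y)-\varrho_u^x(y)|\to 0$. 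For the second term, the uniform linear growth of the $b_n$ together with the bound $|\varrho_u^x(y)|\le C(1+|y|)$ (from linear growth of $\varphi$ and \Cref{lem:RegBoundsSolution}) gives $|b_n(u,y,\varrho_u^x(y))-b(u,y,\varrho_u^x(y))|\le C(1+|y|)$ uniformly in $n\ge 0$ and $x\in K$; moreover, exactly as in the proof of \Cref{lem:RegRepDelta2}, for a.e.\ $(u,y)$ the set $\{\varrho_u^x(y):x\in K\}$ is compact (by continuity of $x\mapsto\varrho_u^x(y)$, cf.\ \eqref{eq:rhoContinuity} and \Cref{cor:Hölder}) and $b_n\to b$ locally uniformly in the third variable by construction of the mollified sequence, so $\sup_{x\in K}|b_n(u,y,\varrho_u^x(y))-b(u,y,\varrho_u^x(y))|\to 0$ for a.e.\ $(u,y)$.

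To convert these statements into the desired $L^{2p}(\Omega)$-convergence I would invoke the moment estimates for integration with respect to local time from \cite{Eisenbaum_LocalTimeIntegral} and \cite{MeyerBrandisBanosDuedahlProske_ComputingDeltas}: there is a constant $C=C(p,T)$ and a weighted $L^{2p}$-type norm $\|\cdot\|$, built from the Gaussian transition density of $B^x$, such that
\begin{align*}
  \Ebb\Big[\Big|\int_s^t\int_\Rbb g(u,y)\,L^{B^x}(du,dy)\Big|^{2p}\Big]^{1/(2p)}\le C\,\|g\|,
\end{align*}
where, because of the Gaussian weight, $\|g\|$ is finite for $g$ dominated by $C(1+|y|)$ and is bounded uniformly over $x\in K$ on such a family. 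Applying this with $g=\bfr_n-\bfr$ and the decomposition above, the first term contributes $\lesssim\varepsilon_n\to 0$ (controlled through its sup-norm against the norm of the constant function), while the second term contributes a quantity vanishing by dominated convergence with dominating function $C(1+|y|)$; both bounds being uniform in $x\in K$, the claim follows in combination with the first paragraph.

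The step I expect to be the main obstacle is establishing the local-time moment estimate of the third paragraph in precisely the form required — with a norm that is simultaneously finite on the linear dominating function $1+|y|$, sequentially continuous under a.e.\ convergence of the integrand, and uniformly bounded over the compact set $K$ of initial conditions. This is the technical core inherited from \cite{Bauer_StrongSolutionsOfMFSDEs}, whereas the reduction in the first paragraph and the dominated-convergence bookkeeping in the second are routine.
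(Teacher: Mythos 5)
The paper itself contains no proof of this corollary: it is imported verbatim from \cite[Lemmas A.5, A.6 \& A.7]{Bauer_StrongSolutionsOfMFSDEs}, so there is no in-paper argument to compare against, and your proposal has to be judged on its own. What you write is essentially the standard argument from that reference and is consistent with the proofs the present paper does spell out: the linearization via \eqref{eq:RegExponentialInequality} combined with Cauchy--Schwarz and \Cref{cor:RegBoundLocalTime} with $\lambda=2p$ is exactly right; the splitting of $\bfr_n-\bfr$ into a law-replacement part (controlled by the Lipschitz constant of $b_n$ in the third variable, which is uniform in $n$ since mollification preserves it, together with $\sup_{u,y,x\in K}\vert\varrho_u^{n,x}(y)-\varrho_u^{x}(y)\vert\to0$, a legitimate strengthening of \Cref{rem:convergenceRho} because the Lipschitz constant of $\varphi(u,y,\cdot)$ does not depend on $(u,y)$ — the same uniformity the paper already invokes in the proof of \Cref{lem:RegRepDelta2}) and a mollification part (dominated by $C(1+\vert y\vert)$ and vanishing for a.e.\ $(u,y)$ by equi-Lipschitz continuity in $z$ plus compactness of $\lbrace\varrho_u^x(y):x\in K\rbrace$, the same device used at the end of the proof of \Cref{lem:RegRepDelta2}) is the right decomposition, and correctly uses $\varphi$ rather than $\varphi_n$ inside $\bfr_n$. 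The one ingredient you assume rather than prove is the $L^{2p}(\Omega)$ bound for local-time-space integrals with integrands of linear growth, uniform over $x\in K$ and compatible with dominated convergence; this is genuinely the technical core, it follows from Eisenbaum's forward--backward decomposition of $\int_s^t\int_\Rbb g(u,y)\,L^{B^x}(du,dy)$ into It\^o integrals plus a singular drift-correction term estimated against Gaussian weights (see \cite{Eisenbaum_LocalTimeIntegral}, \cite{MeyerBrandisBanosDuedahlProske_ComputingDeltas}), and it is precisely what the appendix of \cite{Bauer_StrongSolutionsOfMFSDEs} supplies. Since the paper handles the whole corollary by citation, leaning on that estimate is not a gap in substance, and your argument is sound.
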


\section{Skorokhod's representation theorem}

The following result is a version of Skorokhod's representation theorem and is due to \cite[Ch. 1 Sec.6]{skorokhod1982studies}.

\begin{theorem}\label{thm:SkorohodRepresentationTheorem}
	Let $\lbrace (\xi_t^n)_{t\in[0,T]} \rbrace_{n \geq 1}$ be a sequence of $\Rbb^d$-valued stochastic processes defined on probability spaces $(\Omega^n, \Fcal^n, \Pbb^n)$, respectively, which are stochastically continuous from the right and fulfill for every $\varepsilon >0$
	\begin{align*}
		&\lim_{C \to \infty} \lim_{n \to \infty} \sup_{t\in [0,T]} \Pbb^n( \Vert \xi_t^n \Vert > C) = 0, \text{ and } \\
		&\lim_{h\to 0} \lim_{n\to \infty} \sup_{\vert t-s \vert \leq h} \Pbb^n(\Vert \xi_t^n - \xi_s^n \Vert > \varepsilon) = 0.
	\end{align*}
	Then, there exists a subsequence $\lbrace n_k\rbrace_{k\geq 1} \subset \Nbb$ and a sequence of $\Rbb^d$-valued stochastic processes $\lbrace (X_t^k)_{t\in[0,T]} \rbrace_{k\geq 0}$ on a common probability space $(\Omega, \Fcal, \Pbb)$ such that
	\begin{enumerate}[(i)]
		\item for all $k\geq 1$, finite dimensional distributions of the processes $X^k$ and $\xi^{n_k}$ coincide, and 
		\item $X_t^k$ converges in probability to $X_t^0$ for every $t\in [0,T]$.
	\end{enumerate}
\end{theorem}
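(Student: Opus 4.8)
The plan is to combine a tightness and diagonalisation argument for the finite-dimensional distributions with Skorokhod's coupling on a countable skeleton of times, using the oscillation hypothesis to pass from the skeleton to all of $[0,T]$.

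First I would prove tightness of the finite-dimensional laws. For fixed $t_1,\dots,t_m\in[0,T]$ the first hypothesis, applied coordinatewise, makes the family of laws of $(\xi_{t_1}^n,\dots,\xi_{t_m}^n)$ on $(\Rbb^d)^m$ tight: given $\eta>0$ one picks $C$ and then $N$ with $\sup_t\Pbb^n(\Vert\xi_t^n\Vert>C)<\eta/m$ for $n\ge N$ and enlarges $C$ to absorb the finitely many remaining indices. Let $S=\{s_0,s_1,\dots\}\subset[0,T]$ be a countable dense set containing $0$ and $T$. By Prokhorov's theorem and a diagonal argument over all finite subsets of $S$ I can extract a subsequence $\{n_k\}$ along which the law of every finite tuple $(\xi_{s_{i_1}}^{n_k},\dots,\xi_{s_{i_m}}^{n_k})$ converges weakly to a probability measure $\mu_{i_1,\dots,i_m}$ on $(\Rbb^d)^m$.

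Next I would identify the limit. The family $\{\mu_{i_1,\dots,i_m}\}$ is Kolmogorov-consistent, since marginalisation and permutation invariance survive weak limits, so the Kolmogorov extension theorem yields a process $(X_s)_{s\in S}$ with these finite-dimensional distributions. The oscillation hypothesis passes to the limit because $\{y\in\Rbb^d:\Vert y\Vert>\varepsilon'\}$ is open: for $0<\varepsilon'<\varepsilon$ the Portmanteau theorem gives $\Pbb(\Vert X_s-X_t\Vert>\varepsilon')\le\liminf_k\Pbb^{n_k}(\Vert\xi_s^{n_k}-\xi_t^{n_k}\Vert>\varepsilon')$, so $(X_s)_{s\in S}$ is uniformly stochastically continuous on $S$ and hence extends, by completing in-probability limits along $S$, to a stochastically continuous process $(X_t^0)_{t\in[0,T]}$.

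Finally I would realise everything on one probability space. Since $\Rbb^d$ is Polish, $(\Rbb^d)^S$ with the product topology is Polish, and weak convergence there of the law of $(\xi_s^{n_k})_{s\in S}$ to that of $(X_s^0)_{s\in S}$ is equivalent to the finite-dimensional convergence just established. Skorokhod's representation theorem on Polish spaces then produces, on a common space $(\Omega,\Fcal,\Pbb)$, a copy $X^k$ of $(\xi_s^{n_k})_{s\in S}$ for each $k$ and a copy $X^0$ of $(X_s^0)_{s\in S}$ such that convergence holds almost surely in $(\Rbb^d)^S$, in particular $X_s^k\to X_s^0$ in probability for every $s\in S$. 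Extending each $X^k$ off $S$ the same way $X^0$ was extended, the finite-dimensional distributions of $X^k$ and $\xi^{n_k}$ agree on all of $[0,T]$ (by right stochastic continuity), and combining the skeleton convergence with the inherited oscillation estimate upgrades it to $X_t^k\to X_t^0$ in probability for every $t$. This yields (i) and (ii) for the subsequence $\{n_k\}$. The main obstacle is exactly this last step: coupling the whole sequence onto a single space while keeping the finite-dimensional laws of $X^k$ equal to those of $\xi^{n_k}$ and getting convergence in probability at \emph{every} time, not merely along $S$. The device that makes it work is to collapse the uncountable time index to the countable dense set $S$ using the uniform stochastic continuity furnished by the second hypothesis, apply the classical Skorokhod coupling on the Polish space $(\Rbb^d)^S$, and transport the convergence back; a secondary technical point is the care needed to push the strict inequality ``$>\varepsilon$'' through weak limits, which is why one argues with a slightly smaller $\varepsilon'$ and the open-set half of the Portmanteau theorem.
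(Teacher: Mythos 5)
Your argument is essentially correct, but note that the paper itself offers no proof of this statement: it is quoted as a known result from Skorokhod's monograph (Ch.~1, Sec.~6 of the reference cited in the appendix), so the only comparison available is with that classical argument. Your reconstruction --- tightness of the finite-dimensional laws from the first hypothesis, a Prokhorov--diagonal extraction over a countable dense set $S\ni 0,T$, Kolmogorov consistency of the limit marginals, transfer of the oscillation bound to the limit through the open-set half of Portmanteau, coupling of the skeleton processes via the classical Skorokhod representation theorem on the Polish space $(\Rbb^d)^S$ (where weak convergence is indeed equivalent to finite-dimensional convergence, by coordinatewise tightness and the fact that cylinder sets generate the Borel $\sigma$-algebra of the countable product), extension off $S$ by right stochastic continuity, and the final three-term triangle-inequality upgrade to convergence in probability at every $t$ --- is a sound modern proof. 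It differs from Skorokhod's original route, which constructs the coupling by hand on $([0,1],\mathrm{Leb})$ through quantile/regular-conditional-distribution techniques rather than invoking the abstract representation theorem on a product space; your version is shorter at the price of quoting that theorem. Two points would deserve an explicit line in a written-out version: (a) the extension of $X^k$ off $S$ is well defined because the Cauchy-in-probability property of $\xi^{n_k}_{s_j}$ for $s_j\downarrow t$, $s_j\in S$, depends only on two-dimensional distributions and therefore transfers to $X^k$, which is also what makes the finite-dimensional laws agree at all times; (b) the upgrade at a fixed $t$ should be spelled out as the bound of $\Pbb\left(\Vert X_t^k-X_t^0\Vert>\varepsilon\right)$ by $\Pbb^{n_k}\left(\Vert\xi_t^{n_k}-\xi_s^{n_k}\Vert>\varepsilon/3\right)+\Pbb\left(\Vert X_s^k-X_s^0\Vert>\varepsilon/3\right)+\Pbb\left(\Vert X_s^0-X_t^0\Vert>\varepsilon/3\right)$ with $s\in S$, $\vert s-t\vert\le h$, using the second hypothesis for the first term, the coupling for the second and the stochastic continuity of $X^0$ for the third --- exactly the combination your last sentence indicates, so these are gaps of exposition rather than of substance.
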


\begin{remark}\label{rem:finiteDistribution}
	Note that we say that finite dimensional distributions of two processes $X$ and $\xi$, defined on $(\Omega, \Fcal, \Pbb)$ and $(\widetilde{\Omega}, \widetilde{\Fcal}, \widetilde{\Pbb})$, respectively, coincide, if for every finite sequence of time points $\lbrace t_k \rbrace_{1 \leq k \leq N} \subset [0,T]$, $1\leq N < \infty$, we have that
	\begin{align*}
		\Pbb_{(X_{t_1}, \dots X_{t_N})} = \widetilde{\Pbb}_{(\xi_{t_1}, \dots \xi_{t_N})}.
	\end{align*}
\end{remark}

\begin{footnotesize}
	\bibliography{literatureTH}
	\bibliographystyle{abbrv}
\end{footnotesize}
\bigskip
\rule{\textwidth}{1pt}

\end{document}